\newif\ifdraft
\DeclareFontFamily{OMS}{rsfs}{\skewchar\font'60}
\DeclareFontShape{OMS}{rsfs}{m}{n}{<-5>rsfs5 <5-7>rsfs7 <7->rsfs10 }{}
\DeclareSymbolFont{rsfs}{OMS}{rsfs}{m}{n}
\DeclareSymbolFontAlphabet{\scr}{rsfs}
\numberwithin{figure}{section}
\newcommand{\bbC}{\mathbb{C}}
\newcommand{\N}{\mathbb{N}}
\newcommand{\Q}{\mathbb{Q}}
\newcommand{\V}{\mathbb{V}}
\newcommand{\Z}{\mathbb{Z}}
\newcommand{\cB}{\mathcal{B}}
\newcommand{\cH}{\mathcal{H}}
\newcommand{\cM}{\mathcal{M}}
\newcommand{\bQ}{\mathbf{Q}}
\newcommand{\bR}{\mathbf{R}}
\newcommand{\fddot}{F_\bullet}
\newcommand{\rarr}{\longrightarrow}
\newcommand{\Exc}{\mathrm{Exc}}
\newcommand{\grf}{\gr^F_\bullet}
\newcommand{\GF}{\shG_\bullet}
\newcommand{\FF}{\shF_\bullet}
\newcommand{\sP}{\mathscr{P}}
\newcommand{\sM}{\mathscr{M}}
\newcommand{\sB}{\mathscr{B}}
\newcommand{\shA}{\mathscr{A}}
\newcommand{\shTA}{\mathscr{T}}
\newcommand{\shD}{\mathscr{D}}
\theoremstyle{plain}
\newtheorem{theorem}{Theorem}[section]
\newtheorem{proposition}[theorem]{Proposition}
\newtheorem{corollary}[theorem]{Corollary}
\newtheorem{lemma}[theorem]{Lemma}
\theoremstyle{definition}
\newtheorem{definition}[theorem]{Definition}
\theoremstyle{remark}
\newtheorem{remark}[theorem]{Remark}
\newtheorem{set-up}[theorem]{Set-up}
\newtheorem{claim}[theorem]{Claim}
\definecolor{labelkey}{gray}{0.5}
\tikzset{commutative diagrams/arrow style=math font}
\newlength{\myarrowsize} 
\newenvironment{diagram*}[2]{%
\[%
\begin{tikzpicture}[>=cmto,baseline=(current bounding box.center),%
	to/.style={->,font=\scriptsize,cap=round},%
	into/.style={cmhook->,font=\scriptsize,cap=round},%
	onto/.style={-cmonto,font=\scriptsize,cap=round},%
	math/.style={matrix of math nodes, row sep=#2, column sep=#1,%
		text height=1.5ex, text depth=0.25ex}]%
}{%
\end{tikzpicture}%
\]%
\ignorespacesafterend%
}
\newcommand{\Dmod}{\mathscr{D}}
\newcommand{\ZZ}{\mathbb{Z}}
\newcommand{\QQ}{\mathbb{Q}}
\newcommand{\CC}{\mathbb{C}}
\newcommand{\PP}{\mathbb{P}}
\renewcommand{\Im}{\operatorname{Im}}
\DeclareMathOperator{\Sym}{Sym}
\DeclareMathOperator{\gr}{gr}
\newcommand{\shf}[1]{\mathscr{#1}}
\newcommand{\sA}{\scr{A}}
\newcommand{\sC}{\scr{C}}
\newcommand{\sD}{\scr{D}}
\newcommand{\sE}{\scr{E}}
\newcommand{\sL}{\scr{L}}
\newcommand{\sF}{\scr{F}}
\newcommand{\sG}{\scr{G}}
\newcommand{\sH}{\scr{H}}
\newcommand{\sK}{\scr{K}}
\newcommand{\sN}{\scr{N}}
\newcommand{\sO}{\scr{O}}
\newcommand{\sT}{\scr{T}}
\def\overbar#1#2#3{{%
	\setbox0=\hbox{\displaystyle{#1}}%
	\dimen0=\wd0
	\advance\dimen0 by -#2 
	\vbox {\nointerlineskip \moveright #3 \vbox{\hrule height 0.3pt width \dimen0}%
		\nointerlineskip \vskip 1.5pt \box0}%
}}
\newcommand{\shF}{\shf{F}}
\newcommand{\shG}{\shf{G}}
\newcommand{\shO}{\shf{O}}
\newcommand{\wtilde}{\widetilde}
\let\@@seccntformat\@seccntformat
\renewcommand*{\@seccntformat}[1]{%
  \expandafter\ifx\csname @seccntformat@#1\endcsname\relax
    \expandafter\@@seccntformat
  \else
    \expandafter
      \csname @seccntformat@#1\expandafter\endcsname
  \fi
    {#1}%
}
\newcommand*{\@seccntformat@subsection}[1]{%
  \textbf{\csname the#1\endcsname.}
}
\let\@paragraph\paragraph
\renewcommand*{\paragraph}[1]{%
	\vspace{0.3\baselineskip}%
	\@paragraph{\textit{#1}}%
}
\newcommand{\theoremref}[1]{\hyperref[#1]{Theorem~\ref*{#1}}}
\newcommand{\lemmaref}[1]{\hyperref[#1]{Lemma~\ref*{#1}}}
\newcommand{\definitionref}[1]{\hyperref[#1]{Definition~\ref*{#1}}}
\newcommand{\propositionref}[1]{\hyperref[#1]{Proposition~\ref*{#1}}}
\newcommand{\conjectureref}[1]{\hyperref[#1]{Conjecture~\ref*{#1}}}
\newcommand{\corollaryref}[1]{\hyperref[#1]{Corollary~\ref*{#1}}}
\newcommand{\exampleref}[1]{\hyperref[#1]{Example~\ref*{#1}}}
\let\old@caption\caption
\renewcommand*{\caption}[1]{%
	\setcounter{figure}{\value{equation}}%
	\stepcounter{equation}%
	\old@caption{#1}\relax%
}
\newcounter{intro}
\newtheorem{intro-conjecture}[intro]{Conjecture}
\newtheorem{intro-corollary}[intro]{Corollary}
\newtheorem{intro-theorem}[intro]{Theorem}
\newcommand{\parref}[1]{\hyperref[#1]{\S\ref*{#1}}}
\newcommand*\if@single[3]{%
  \setbox0\hbox{${\mathaccent"0362{#1}}^H$}%
  \setbox2\hbox{${\mathaccent"0362{\kern0pt#1}}^H$}%
  \ifdim\ht0=\ht2 #3\else #2\fi
  }
\newcommand*\rel@kern[1]{\kern#1\dimexpr\macc@kerna}
\newcommand*\widebar[1]{\@ifnextchar^{{\wide@bar{#1}{0}}}{\wide@bar{#1}{1}}}
\newcommand*\wide@bar[2]{\if@single{#1}{\wide@bar@{#1}{#2}{1}}{\wide@bar@{#1}{#2}{2}}}
\newcommand*\wide@bar@[3]{%
  \begingroup
  \def\mathaccent##1##2{%
    \if#32 \let\macc@nucleus\first@char \fi
    \setbox\z@\hbox{$\macc@style{\macc@nucleus}_{}$}%
    \setbox\tw@\hbox{$\macc@style{\macc@nucleus}{}_{}$}%
    \dimen@\wd\tw@
    \advance\dimen@-\wd\z@
    \divide\dimen@ 3
    \@tempdima\wd\tw@
    \advance\@tempdima-\scriptspace
    \divide\@tempdima 10
    \advance\dimen@-\@tempdima
    \ifdim\dimen@>\z@ \dimen@0pt\fi
    \rel@kern{0.6}\kern-\dimen@
    \if#31
      \overline{\rel@kern{-0.6}\kern\dimen@\macc@nucleus\rel@kern{0.4}\kern\dimen@}%
      \advance\dimen@0.4\dimexpr\macc@kerna
      \let\final@kern#2%
      \ifdim\dimen@<\z@ \let\final@kern1\fi
      \if\final@kern1 \kern-\dimen@\fi
    \else
      \overline{\rel@kern{-0.6}\kern\dimen@#1}%
    \fi
  }%
  \macc@depth\@ne
  \let\math@bgroup\@empty \let\math@egroup\macc@set@skewchar
  \mathsurround\z@ \frozen@everymath{\mathgroup\macc@group\relax}%
  \macc@set@skewchar\relax
  \let\mathaccentV\macc@nested@a
  \if#31
    \macc@nested@a\relax111{#1}%
  \else
    \def\gobble@till@marker##1\endmarker{}%
    \futurelet\first@char\gobble@till@marker#1\endmarker
    \ifcat\noexpand\first@char A\else
      \def\first@char{}%
    \fi
    \macc@nested@a\relax111{\first@char}%
  \fi
  \endgroup
}
\setlist[enumerate]{label=(\thetheorem.\arabic*), before={\setcounter{enumi}{\value{equation}}}, after={\setcounter{equation}{\value{enumi}}}}
\numberwithin{equation}{theorem}
\begin{document}

\title[Brody hyperbolicity  of base spaces]{Brody hyperbolicity  of base spaces of certain families of varieties}

\author{Mihnea Popa}
\address{Mihnea Popa, Department of Mathematics, Northwestern University,
2033 Sheridan Road, Evanston, IL 60208, USA} 
\email{\href{mailto:mpopa@math.northwestern.edu}{mpopa@math.northwestern.edu}}
\urladdr{\href{http://www.math.northwestern.edu/~mpopa/}{http://www.math.northwestern.edu/~mpopa/}}

\author{Behrouz Taji}
\address{Behrouz Taji, University of Notre Dame, Department of Mathematics, 278 Hurley, Notre Dame, IN
46556 USA.}
\email{\href{mailto:btaji@nd.edu}{btaji@nd.edu}}
\urladdr{\href{http://sites.nd.edu/b-taji}{http://sites.nd.edu/b-taji}}

\author{Lei Wu}
\address{Lei Wu, Department of Mathematics, University of Utah,
155 S 1400 E, Salt Lake City, UT 84112, USA}
\email{\href{mailto:lwu@math.utah.edu}{lwu@math.utah.edu}}
\urladdr{\href{https://www.math.utah.edu/~lwu/}{https://www.math.utah.edu/~lwu/}}

\keywords{Brody hyperbolicity, minimal models, moduli of polarized varieties, 
varieties of general type, Green-Griffiths-Lang's conjecture, Hodge modules.}

\subjclass[2010]{14E30, 13C30, 14D07, 14J10, 14J15, 14J29.}

\thanks{MP was partially supported by the NSF grant DMS-1700819}


\setlength{\parskip}{0.19\baselineskip}


\begin{abstract}
We prove that quasi-projective base spaces of smooth families of minimal varieties of general type 
with maximal variation do not admit Zariski dense entire curves. We deduce the fact that 
moduli stacks of polarized varieties of this sort are Brody hyperbolic, answering a special case of a question of
Viehweg and Zuo. For two-dimensional bases, we show analogous results in the more general case of families 
of varieties admitting a good minimal model.

\end{abstract}

\maketitle


\section{Introduction}
\label{sect:intro}

The purpose of this paper is to establish a few results related to the hyperbolicity of base spaces of families of 
smooth complex varieties having maximal variation. Our study is motivated by the conjecturally degenerate behavior of 
entire curves inside the moduli $P_h$ of polarized manifolds, corresponding to the moduli functor $\sP_h$ which associates to a variety
$V$ the set $\sP_h (V)$ of pairs $(f\colon U \to V, \sH)$, where $f$ is a smooth projective morphism whose fibers have semiample 
canonical bundle and $\sH$ is an $f$-ample line bundle with Hilbert polynomial $h$, up to isomorphisms and fiberwise numerical equivalence.
The coarse moduli spaces $P_h$ were shown to be quasi-projective schemes by Viehweg, cf.~\cite{Viehweg95}.

\subsection{Families of minimal varieties of general type}
The first result partially answers a question of Viehweg and Zuo, cf. \cite[Quest.~0.2]{Vie-Zuo03a}, 
who established in  their fundamental paper the analogous result in the case of moduli of canonically polarized manifolds 
(i.e. those whose canonical bundle is ample), cf.~\cite[Thm.~0.1]{Vie-Zuo03a}.

\begin{theorem}
\label{thm:BHMain}
Let $f_U \colon U \to V$ be a smooth family of polarized manifolds of \emph{general type} in $\sP_h (V)$, with $V$  
quasi-projective, such that the induced morphism $\sigma\colon V \to P_h$ is quasi-finite onto its image.
Then $V$ is Brody hyperbolic, that is any holomorphic map $\gamma \colon \mathbb C \to V$ is constant. 
\end{theorem}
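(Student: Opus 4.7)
The plan is to reduce \theoremref{thm:BHMain} to a Zariski-non-density statement for entire curves, and then establish that statement by combining a Viehweg--Zuo type construction of negatively curved log symmetric differentials with a standard Nevanlinna/Ahlfors argument.

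First I would reduce Brody hyperbolicity to the assertion that for any family as in the theorem, no entire curve in $V$ has Zariski-dense image. Suppose for contradiction that $\gamma\colon \mathbb{C} \to V$ is non-constant, and let $W$ be the Zariski closure of $\gamma(\mathbb{C})$ in $V$. Picking a desingularization $\widetilde{W}\to W$, the simple connectedness of $\mathbb{C}$ allows $\gamma$ to lift to a holomorphic $\widetilde{\gamma}\colon \mathbb{C}\to \widetilde{W}$ with Zariski-dense image. The pulled-back family on $\widetilde{W}$ is still a smooth family of polarized minimal manifolds of general type, and since $\sigma\colon V\to P_h$ is quasi-finite, the induced moduli map $\widetilde{W}\to P_h$ is also quasi-finite onto its image. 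Thus by induction on $\dim V$ it suffices to rule out Zariski-dense entire curves for every such quasi-projective base.

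Next I would fix a log-smooth compactification $V \subset Y$ with $D := Y\setminus V$ a simple normal crossings divisor, and construct a \emph{Viehweg--Zuo sheaf} on $(Y,D)$: a big line bundle $\mathcal{A}$ on $Y$ (possibly after a further log-resolution) together with a non-zero morphism
\[
  \mathcal{A} \hookrightarrow \Sym^N \Omega_Y^1(\log D)
\]
for some $N\geq 1$. In the canonically polarized case this is essentially the main theorem of Viehweg--Zuo \cite{Vie-Zuo03a}. For fibers that are only minimal of general type, I would work Hodge-theoretically with the pluricanonical direct images $f_{\ast}\omega_{X/Y}^{\otimes m}$ (where $f\colon X\to Y$ is a suitable log-smooth extension of $f_U$ after semi-stable / weak semi-stable reduction), exploiting on the one hand the positivity of these sheaves (in the sense of Viehweg, Koll\'ar, and Popa--Schnell) coming from the maximal-variation assumption, and on the other hand the negativity of the kernels of the iterated Kodaira--Spencer maps on the graded pieces of the Hodge filtration of the relevant Hodge module on $Y$. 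Combined, these produce the desired inclusion of a big line bundle into $\Sym^N\Omega_Y^1(\log D)$.

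Finally, with such a Viehweg--Zuo sheaf in hand, a log version of the Bloch--Ochiai/Ahlfors theorem --- in the form proved by Noguchi--Winkelmann--Yamanoi, Siu--Yeung, or McQuillan --- forces every entire curve $\gamma\colon \mathbb{C}\to V$ to have image contained in a proper Zariski closed subset of $Y$ (concretely, in the augmented base locus of $\mathcal{A}$), hence a fortiori in a proper subvariety of $V$. This contradicts the Zariski density produced by the reduction step and completes the proof. The main obstacle is the middle step: the Viehweg--Zuo construction in the general-type minimal setting is considerably more delicate than in the canonically polarized case, because $\omega_{X/Y}$ is only relatively semi-ample and big rather than relatively ample, so the direct Higgs-bundle analysis must be replaced by a more involved argument using Hodge modules and pluricanonical direct images. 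Once the sheaf is obtained, the entire-curve consequences are comparatively standard.
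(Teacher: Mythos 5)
Your reduction step is fine and matches the paper's \propositionref{prop:reduction}, and the construction of a Viehweg--Zuo type object from pluricanonical direct images and Hodge modules is indeed available in this generality (essentially \cite{PS15}). The gap is in your final step: there is no ``log Bloch--Ochiai/Ahlfors'' theorem that converts a single inclusion $\mathcal{A}\hookrightarrow \Sym^N\Omega^1_Y(\log D)$ with $\mathcal{A}$ big into algebraic degeneracy of all entire curves in $V$, let alone degeneracy into the augmented base locus of $\mathcal{A}$. What the standard fundamental vanishing theorem for (log) symmetric differentials with values in the dual of an ample line bundle gives (Siu--Yeung, Noguchi, \cite{Dem97}) is only that every entire curve $\gamma$ satisfies the differential equation defined by the symmetric differential, i.e.\ its derivative lands in a prescribed subvariety of the projectivized (log) tangent bundle; it does not bound the Zariski closure of $\gamma(\CC)$ itself. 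If one big symmetric differential sufficed for degeneracy, the Green--Griffiths--Lang conjecture would already follow for, say, all surfaces with $c_1^2>c_2$ via Bogomolov's differentials, whereas even there one needs McQuillan's deep foliation theorem \cite{McQ98}, which is a genuinely two-dimensional tool (the paper uses it, through El Goul's log version \cite{El03}, only for \theoremref{thm:MainGG}, not for the general-type case in arbitrary dimension).

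This is exactly why the paper (following \cite{Vie-Zuo03a}) does not stop at producing a Viehweg--Zuo sheaf. Instead it keeps the full Higgs-theoretic package of \propositionref{prop:summary}: the maps $\tau_{(\gamma,k)}\colon \sT_\CC^{\otimes k}\to \gamma^*(\sL^{-1}\otimes\sE_k)$, the comparison of the first Higgs map with the Kodaira--Spencer map of the family (\corollaryref{cor:GTExtra}, which is where minimality and general type enter via Bogomolov--Sommese vanishing), and then the analytic core \propositionref{prop:BHHiggs}: assuming $\tau_{(\gamma,1)}\neq 0$, one factors some $\tau_{(\gamma,p)}$ through the kernel of the pulled-back Higgs field, uses Zuo/Griffiths negativity of that kernel together with the Cattani--Kaplan--Schmid growth estimates for the Hodge metric, builds a singular metric on $\sT_\CC$ violating the singular Ahlfors--Schwarz inequality, and contradicts the vanishing of the Kobayashi pseudometric on $\CC$. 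The maximal variation of the pulled-back family over a Zariski-dense $\gamma$ is what forces $\tau_{(\gamma,1)}$ to be injective, closing the argument. To repair your proposal you would need to replace the appeal to a nonexistent general degeneracy theorem by an argument of this kind (or restrict to $\dim V\le 2$, where the foliation/McQuillan route of \theoremref{thm:MainGG} is the one the paper actually takes).
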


The question in \cite{Vie-Zuo03a} asks whether the same holds for moduli of arbitrary polarized varieties, i.e. not 
necessarily of general type. While this was our original goal, in the general case we have not been able to overcome difficulties related to vanishing theorems.
We do however give a positive answer to an even more general version of this question when $V$ is a surface; see Corollary \ref{cor:MainBH}. Note that the more restrictive property of algebraic hyperbolicity, involving algebraic maps from curves and abelian varieties, has been known in great generality. It  was established by Kov\'acs \cite{Kovacs00a} for moduli of canonically polarized manifolds, and then by a combination of Viehweg-Zuo \cite{Vie-Zuo01} and Popa-Schnell \cite{PS15} for families admitting good minimal models. See also Migliorini \cite{Migliorini95} for families of surfaces.

Theorem~\ref{thm:BHMain} is a direct consequence of the following result regarding the 
base spaces of smooth families of minimal manifolds of general type that have 
maximal variation. 
Recall first that the exceptional locus of $V$ is defined as 
$$
\Exc(V)  : = \overline{\Big( \bigcup_{\gamma} \gamma(\mathbb C)\Big)},
$$
where the union is taken over all non-constant holomorphic maps $\gamma\colon \mathbb C \to V$, and the closure is in the Zariski topology.

\begin{theorem}\label{thm:MaxVar}
Let $f_U\colon U\to V$ be a smooth projective morphism of smooth, quasi-projective 
varieties. Assume that $f_U$ has maximal variation, and that its fibers are minimal 
manifolds of general type. Then the exceptional locus $\Exc(V)$ is a proper subset of $V$. 
In particular, every holomorphic map $\gamma: \mathbb C \to V$
is algebraically degenerate, that is the image of $\gamma$ is not Zariski dense. 
\end{theorem}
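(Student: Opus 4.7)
The plan is to combine two ingredients: (i) the construction of a Viehweg--Zuo sheaf on a good log compactification of the base, and (ii) a Nevanlinna-type statement saying that the presence of such a sheaf forces every entire curve into a proper algebraic subset.

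First, I would choose a smooth projective compactification $\bar V$ of $V$ with boundary $D := \bar V \setminus V$ a simple normal crossings divisor, together with a log smooth extension $\bar f\colon \bar U \to \bar V$ of $f_U$ (possibly after further blow-ups of the total space). The desired conclusion $\Exc(V) \subsetneq V$ is equivalent to producing a proper Zariski-closed subset of $\bar V$ containing the image of every holomorphic $\gamma\colon \mathbb C \to V$.

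The central step is to build a \emph{Viehweg--Zuo sheaf} on $\bar V$: a big invertible subsheaf
\[
\shA \hookrightarrow \Sym^N \Omega^1_{\bar V}(\log D)
\]
for some $N \geq 1$. I would follow the classical strategy of Viehweg--Zuo, in the refined form developed via Hodge modules by Popa--Schnell. The input is Viehweg's weak positivity together with the fact that maximal variation of a family whose fibers have semiample canonical bundle yields a big line bundle on $\bar V$ arising from a determinant of some $\bar f_* \omega^m_{\bar U/\bar V}$. One then constructs, using a cyclic cover branched along a suitable pluricanonical divisor on the fibers (which exists since minimality plus general type makes $\omega_{\bar U / \bar V}$ semiample on each fiber), an auxiliary variation of mixed Hodge structure on $V$; the iterated Kodaira--Spencer / Higgs field of its associated graded piece yields the desired map into log symmetric differentials, and maximal variation is precisely what guarantees generic injectivity and hence bigness of $\shA$.

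Finally, given $\shA$, I would invoke a logarithmic Nevanlinna/Ahlfors--Schwarz-type theorem --- in the spirit of Demailly's jet differentials and its refinements by Campana--P\u{a}un and Brotbek--Brunebarbe --- to conclude that any $\gamma\colon \mathbb C \to V$ has image contained in the augmented base locus of $\shA$. This locus is a proper Zariski-closed subset of $\bar V$, and its intersection with $V$ contains $\Exc(V)$, giving the theorem. The main obstacle is the construction of $\shA$ outside the canonically polarized setting: without ampleness of $\omega_{U/V}$ on fibers the original cyclic-cover argument of Viehweg--Zuo breaks down, and one must replace it with a Hodge-module argument in which controlling the behavior of the Higgs field along the boundary $D$, and verifying that the resulting subsheaf is genuinely big rather than merely nonzero, form the technical heart of the proof.
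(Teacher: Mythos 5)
The construction in your first two steps is broadly in line with what the paper does (a Viehweg--Zuo-type Higgs/Hodge-module construction on a log compactification, with a big line bundle coming from maximal variation), but the final step is a genuine gap, and it is exactly the step the whole paper is designed to replace. There is no known ``logarithmic Nevanlinna/Ahlfors--Schwarz theorem'' asserting that the existence of a big invertible subsheaf $\shA\hookrightarrow \Sym^N\Omega^1_{\bar V}(\log D)$ forces every entire curve $\gamma\colon\CC\to V$ into the augmented base locus of $\shA$. The fundamental vanishing theorem for (logarithmic) symmetric differentials only says that sections of $\Sym^{N}\Omega^1_{\bar V}(\log D)\otimes H^{-1}$, $H$ ample, pull back to zero along $\gamma$; this constrains the \emph{derivative} of $\gamma$ (its lift to the projectivized bundle lies in a base locus upstairs) and does not imply that $\gamma(\CC)$ itself lies in a proper algebraic subset. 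Indeed, if such a statement were available, algebraic degeneracy would follow immediately from the already-known facts that such bases carry Viehweg--Zuo sheaves and are of log general type (Campana--P\u aun, Popa--Schnell); the paper explicitly points out that this implication is only the Green--Griffiths--Lang \emph{conjecture}. So as written, your last paragraph assumes the hard part.

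What the paper does instead is to keep the Hodge-theoretic origin of the sheaf in play rather than abstracting it to bigness of $\shA$. Two ingredients replace your Nevanlinna step. First (Corollary \ref{cor:GTExtra}, and this is precisely where minimality and general type are used, via Bogomolov--Sommese vanishing), the map $\sT_Y(-\log D)\to \sF_0^{-1}\otimes\sF_1$ is identified generically with the Kodaira--Spencer map of the family, so that a Zariski-dense $\gamma$ would make $\tau_{(\gamma,1)}\colon\sT_\CC\to\gamma^*(\sL^{-1}\otimes\sE_1)$ nonzero. Second (Proposition \ref{prop:BHHiggs}), one shows $\tau_{(\gamma,1)}=0$ unconditionally: iterating the Higgs field, the map factors through the kernel of the Higgs field pulled back to $\CC$, where Zuo-type curvature negativity of the Hodge metric, the Schmid/Cattani--Kaplan--Schmid growth estimates at $D+S$, and a perturbed singular metric on an ample line bundle combine to produce a singular metric on $\CC$ violating the Ahlfors--Schwarz inequality (i.e.\ a nondegenerate Kobayashi pseudometric on $\CC$), a contradiction. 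Note also that your sketch does not address the stronger assertion that $\Exc(V)$ is a \emph{proper} subset: the paper deduces this from the degeneracy statement applied over all subvarieties, using Koll\'ar's generically finite map for families of general type (Lemma \ref{lem:FiniteMap}); your route would have given it for free from the augmented base locus, but only if the missing implication were true.
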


For the general definition of the variation ${\rm Var} (f)$ of a family, we refer to~\cite{Viehweg83}. We are only concerned 
with maximal variation, ${\rm Var} (f) = \dim V$, which means that the very general fiber can only be birational to countably many  other fibers; cf. also Lemma \ref{countable}.
For families coming from maps to moduli schemes,  maximal variation simply means that the moduli map $V\to M$  
is generically finite. 
 
 The theorem above is of course especially relevant for families of surfaces, where the minimality assumption becomes 
 unnecessary, as one can pass to smooth minimal models in families. Recall that Giesker \cite{Gieseker} has constructed a coarse moduli space $M$ parametrizing birational isomorphism classes of surfaces of general type.

\begin{corollary}
Let $f_U\colon U\to V$ be a smooth projective family of surfaces of general type with maximal variation. Then 
$\Exc(V)$ is a proper subset of $V$.  If moreover the family comes from a quasi-finite map $V \to M$ to the moduli space of surfaces of general type, then $V$ is Brody hyperbolic.
\end{corollary}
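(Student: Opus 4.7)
The plan is to reduce both assertions to \theoremref{thm:MaxVar} and \theoremref{thm:BHMain} by replacing $f_U \colon U \to V$ with a smooth family $f_U' \colon U' \to V$ whose fibers are the (unique) smooth minimal models of the original fibers; the dimension-two hypothesis enters precisely because such a simultaneous smooth relative minimal model exists over all of $V$.

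\textbf{Step 1 (simultaneous minimal model in families).} A $(-1)$-curve $C$ in a smooth surface of general type $F$ is a rigid rational curve with normal bundle $\shO_{\P^1}(-1)$, and by Castelnuovo $F_{\min}$ is obtained by contracting every such $C$. In the smooth family $f_U$ the normal bundle of $C$ in $U$ sits in
\[
0 \rarr \shO_{\P^1}(-1) \rarr N_{C/U} \rarr \shO_{\P^1}^{\dim V} \rarr 0,
\]
so $C$ deforms in a $(\dim V)$-parameter family, with a unique deformation in each nearby fiber. Hence the relative $(-1)$-curves sweep out a closed subscheme of $U$ which is a disjoint union of $\P^1$-bundles (over finite \'etale covers of $V$) and can be contracted smoothly over $V$, its local analytic model being the classical contraction of a $(-1)$-curve. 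Iterating finitely many times yields the desired smooth projective $f_U' \colon U' \to V$. Equivalently, one may realize $U'$ as a simultaneous minimal resolution of the relative canonical model $\Proj_V \bigoplus_{n\ge 0} (f_U)_\ast \omega_{U/V}^{\otimes n}$, whose fibers have at worst Du Val singularities.

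\textbf{Step 2 (first assertion).} Since the minimal model of a surface of general type is a birational invariant, $f_U'$ still has maximal variation. Applying \theoremref{thm:MaxVar} to $f_U'$ gives $\Exc(V) \subsetneq V$.

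\textbf{Step 3 (Brody hyperbolicity).} Assume that the moduli map $V \to M$ is quasi-finite. Pick any $f_U'$-ample line bundle $\sH$ on $U'$, which exists because $V$ is quasi-projective and $f_U'$ is projective. Since minimal surfaces of general type have semiample canonical bundles, $(f_U', \sH) \in \sP_h(V)$ for a suitable Hilbert polynomial $h$. By Matsusaka's big theorem the forgetful map $P_h \to M$ has finite fibers, so the composition $V \to P_h \to M$ being quasi-finite forces $V \to P_h$ to be quasi-finite onto its image. \theoremref{thm:BHMain} now yields the Brody hyperbolicity of $V$.

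The principal obstacle is \textbf{Step 1}: one must produce the relative minimal model as a smooth $V$-scheme over \emph{all} of $V$, not merely over a dense open, since entire curves could otherwise escape into the complement. This is the only place where the surface hypothesis is used; the argument fails in higher dimensions because neither uniqueness nor smoothness of minimal models in families is available there.
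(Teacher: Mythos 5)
Your reduction is the one the paper intends: the paper gives no proof of this corollary beyond the remark that ``one can pass to smooth minimal models in families'' and then invokes \theoremref{thm:MaxVar}, so your Steps 2 and 3 follow the official route. (For the Brody statement the paper would argue as in \propositionref{prop:reduction}: restrict to the Zariski closure of an entire curve, note the restricted family is still quasi-finite over $M$ hence of maximal variation, and use \theoremref{thm:MaxVar} in dimension $\ge 2$ and \cite{Vie-Zuo01} for curves. Your detour through $P_h$ and \theoremref{thm:BHMain} is also legitimate, provided you check that the contracted family $U'\to V$ is a \emph{projective} morphism so that a relative polarization exists; the appeal to Matsusaka is superfluous, since quasi-finiteness of $V\to P_h$ already follows from the fact that its fibers are contained in those of the quasi-finite composite $V\to M$.)

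The genuine gap is in Step 1, exactly at the point you yourself flag as the principal obstacle. Your normal-bundle computation correctly shows that the Hilbert scheme is \'etale over $V$ at every fiberwise $(-1)$-curve (openness and unique local deformation), but it does not give properness, and the structural claim that the relative $(-1)$-curves sweep out a \emph{closed} subscheme which is a disjoint union of $\mathbb{P}^1$-bundles over finite \'etale covers of $V$ is false: $(-1)$-curves can degenerate to connected chains containing $(-2)$-curves. Concretely, let $S$ be a minimal surface of general type, $V\subset S$ a smooth affine curve through a point $p$, $W$ the blow-up of $S\times V$ along $\{p\}\times V$ with exceptional $\mathbb{P}^1$-bundle $D_p$, and $q\subset W$ the strict transform of the graph of $V\hookrightarrow S$; set $U=\mathrm{Bl}_q(W)\to V$. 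For $v\neq p$ the fiber $U_v$ is $S$ blown up at the two distinct points $p,v$ and has two disjoint $(-1)$-curves, while $U_p$ (blow-up at infinitely near points) has only one, and the Zariski closure of the union of all fiberwise $(-1)$-curves contains the strict transform of $D_{p,p}$, a $(-2)$-curve in $U_p$. So the one-step relative Castelnuovo contraction you describe does not exist (here the relative minimal model is $S\times V$, obtained by contracting a connected chain in the special fiber, not a disjoint union of $(-1)$-curves); since your argument nowhere uses maximal variation, it must handle such configurations, and the same local picture can be grafted onto a non-isotrivial family. Your fallback construction is not automatic either: flat families of Du Val singularities admit simultaneous minimal resolutions in general only after a finite base change (Brieskorn--Tyurina, Atiyah's $A_1$ example), so one must use the given smooth total family --- e.g.\ deformation invariance of plurigenera (hence constancy of $K^2$ of the minimal models, so minimality of fibers is open and closed), together with either a monodromy argument killing the Brieskorn obstruction or an iterated contraction of only those families of $(-1)$-curves whose Hilbert components are proper over $V$ --- to produce $U'\to V$ over \emph{all} of $V$. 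As written, Step 1, which is the only substantive input beyond \theoremref{thm:MaxVar} and \theoremref{thm:BHMain}, is not established.
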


Statements as in Theorem~\ref{thm:BHMain} and~\ref{thm:MaxVar} are conjecturally expected to be consequences 
of a different property of a more algebraic flavor, which is the subject of Viehweg's hyperbolicity conjecture;
itself a generalization of a conjecture of Shafaravich. 
Roughly speaking, Viehweg predicted that for families with maximal variation, a log smooth compactification $(Y, D)$ of $V$ 
is of log general type. The proof of the original statement of the conjecture, in the canonically polarized case, was 
established in important special cases in \cite{VZ02}, \cite{KK08}, \cite{KK08b}, \cite{KK10}, \cite{MR2871152}, 
and was recently completed by Campana and P\u{a}un \cite[Thm.~8.1]{CP16}; for a more detailed overview of this body of 
work and for further references, please see \cite[\S1.2]{PS15}.
The statement was subsequently extended to families whose geometric generic fiber admits a good minimal model, so in particular to families of 
varieties of general type, by the first author and Schnell \cite[Thm.~A]{PS15}. 
On the other hand, the conjecture of Green-Griffiths-Lang, \cite{GG80}
and~\cite{Lan86},  predicts that for a pair $(Y, D)$ of log general type, the image of any entire 
curve $\gamma\colon \mathbb C \to V$ is algebraically degenerate, where $V = Y \smallsetminus D$.

In the canonically polarized case, the problem of hyperbolicity of moduli stacks has 
a rich history from the purely analytic point of view. 
For the moduli stack $\mathcal M_g$ of compact Riemann surfaces of genus $g$, results of Ahlfors
\cite{Ahl61}, Royden \cite{Roy74}
and Wolpert \cite{Wol86} show that the holomorphic sectional curvature of 
the Weil-Petersson metric on the base of a family admitting a quasi-finite map
to $\mathcal M_g$, with $g\geq 2$, is negative 
and bounded away from zero. In particular, such base spaces are Brody hyperbolic.
In higher dimensions, thanks to Aubin-Yau's 
solution to Calabi's conjecture, one studies equivalently 
families of compact complex manifolds admitting a smooth K\"ahler-Einstein metric with negative 
Ricci curvature. The first breakthrough in this direction was achieved by Siu's computation \cite{Siu86}
of the curvature of the Weil-Petersson metric on the moduli via the K\"ahler-Einstein metric of the fibers
of the family (see also~\cite{Sch12}).  To and Yeung \cite{TY15} built upon Siu's work to prove the Kobayashi hyperbolicity 
of moduli stacks of canonically polarized manifolds and thus
gave a new proof of the Brody hyperbolicity of such moduli stacks (see also~\cite{TY16} for the Ricci-flat case). 
We also refer the reader to~\cite[Thm.~9]{Sch17}.  A different proof of this result has been established by 
 Berndtsson, P\u{a}un and Wang \cite{BPW}. Recently, based on results we prove here and methods from the works above, Kobayashi hyperbolicity has also been extended to effectively parametrized families of minimal manifolds of general type by 
 Deng \cite{Deng}.

To go beyond the canonically polarized case, in this paper we 
take a different path based on the approach of Viehweg and Zuo, 
where the key first step is to refine the Hodge theoretic constructions of~\cite{Vie-Zuo03a} 
(and subsequently~\cite{PS15}), with the ultimate goal of  ``generically" endowing any complex line $\mathbb C$ in $V$ 
with a metric with sufficiently negative curvature;  
this is the content of \S\ref{sect:HodgeModules}. The 
next step, presented in \S\ref{sect:Hypo}, is to extend this metric to a singular metric on $\mathbb C$ 
whose curvature current violates the singular Ahlfors-Schwarz inequality. A review of the line of work that has inspired
this approach to hyperbolicity can be found at the end of \cite[\S1]{Vie-Zuo03a}.

\subsection{Two-dimensional parameter spaces in the general case}
As mentioned at the outset, the results in Theorem~\ref{thm:BHMain} and Theorem~\ref{thm:MaxVar} are expected to 
hold for families of manifolds of lower Kodaira dimension as well,  assuming that they have semiample 
canonical bundle or, more generally, admit a good minimal model (this last condition also includes the case of arbitrary fibers of general type).

On a related note, in~\cite[Thm.~A]{PS15} it is shown that the base $V$ of any smooth family 
whose geometric generic fiber admits a good minimal model, and which has maximal variation, is of log general 
type. Thus the Green-Griffiths-Lang conjecture again predicts hyperbolicity properties for $V$.
Note that when $\dim V = 1$, the two properties are equivalent, and had already been established in \cite{Vie-Zuo01}.
We finish the paper by establishing such results in the case when $V$ is  two-dimensional.

\begin{theorem}\label{thm:MainGG}
Let $f_U\colon U \to V$ be a smooth family of projective manifolds, with maximal variation.
Assume that $V$ is a quasi-projective surface.

\begin{enumerate}
\item \label{item:GG1} If the geometric generic fiber of $f$ has a good minimal model,
then every entire curve $\gamma\colon\CC \to V$ is algebraically degenerate. 
\item \label{item:GG2} Moreover, if the fibers are of general type, then the exceptional locus 
$\Exc(V)$ is a proper subset of $V$.
\end{enumerate}
\end{theorem}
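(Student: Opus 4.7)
The plan is to marry the Hodge-theoretic construction of a Viehweg--Zuo type subsheaf, made available in the good minimal model case by the Popa--Schnell refinement \cite[Thm.~A]{PS15}, with a surface-specific hyperbolicity result of Bogomolov--McQuillan type, and, for part~(2), with Theorem~\ref{thm:MaxVar} applied after a relative minimal model construction.

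For part~(1), I would first invoke the construction underlying \cite[Thm.~A]{PS15}: under our hypotheses (smooth family with maximal variation whose geometric generic fiber admits a good minimal model), a log smooth compactification $(Y,D)$ of $V$ carries a big invertible subsheaf $\mathcal{L}\hookrightarrow \Sym^m\Omega^1_Y(\log D)$ for some $m\ge 1$; this is precisely the Viehweg--Zuo sheaf that powers the log general type conclusion of loc.~cit. Since $\dim V = 2$, the existence of such a big subsheaf of symmetric log differentials on a log smooth surface triggers a logarithmic version of the Bogomolov--McQuillan algebraic degeneracy theorem---proved in the compact case by Bogomolov and then McQuillan, and extended to the logarithmic setting by El Goul, Rousseau, and others. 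The mechanism is that $\mathcal{L}$ cuts out a multi-foliation on $Y$ to which an entire curve is forced to be tangent by an Ahlfors--Schwarz comparison, after which McQuillan's theorem on parabolic leaves of foliations on surfaces yields the desired algebraic degeneracy of every $\gamma\colon\CC\to V$.

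For part~(2), the stronger conclusion $\Exc(V)\subsetneq V$ is obtained by reducing to Theorem~\ref{thm:MaxVar}. Since the fibers of $f_U$ are projective manifolds of general type, a relative minimal model program over $V$ (Birkar--Cascini--Hacon--McKernan) produces, after shrinking $V$ to a dense Zariski open $V_0$, a modified family $f'_{U'}\colon U'\to V_0$ of smooth minimal projective manifolds of general type. Maximal variation is preserved under this fibrewise birational modification, so Theorem~\ref{thm:MaxVar} applied to $f'_{U'}$ gives $\Exc(V_0)\subsetneq V_0$. To promote this to the required global statement on $V$, one should run the Hodge-theoretic construction of Sections~\ref{sect:HodgeModules}--\ref{sect:Hypo} directly on a log smooth compactification $(Y,D)$ of $V$; the singular pseudometric with negative curvature current that it produces is globally defined on $V$, and the Ahlfors--Schwarz inequality then confines $\Exc(V)$ to the proper Zariski-closed locus where the pseudometric degenerates.

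The main obstacle will be the surface-specific step in part~(1): translating the abstract existence of a big subsheaf in $\Sym^m\Omega^1_Y(\log D)$ into algebraic degeneracy of entire curves depends essentially on the foliation-theoretic analysis of Bogomolov--McQuillan adapted to the logarithmic boundary, and this ingredient has no available analogue in higher dimension, explaining the restriction to two-dimensional base. A secondary technical point is that the relative minimal model in part~(2) is smooth only over an open subset of $V$, so some care is required to extend the Ahlfors--Schwarz argument across the lower-dimensional locus where this fails, which is why the argument must be anchored on the log compactification $(Y,D)$ of $V$ rather than on $V_0$ alone.
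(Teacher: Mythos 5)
Your part~(1) is a legitimate but genuinely different route from the paper's, and it carries a caveat you should not gloss over. The paper never passes through a big subsheaf of $\Sym^m\Omega^1_Y(\log D)$, jet-type fundamental vanishing, or multi-foliations: instead it takes the sheaf map $\psi\colon \sT_Y(-\log D)\to \sF_0^{-1}\otimes\sF_1$ coming from Proposition~\ref{prop:summary}, kills the composition $\sT_\CC\to\gamma^*(\sL^{-1}\otimes\sE_1)$ by the analytic statement of Proposition~\ref{prop:BHHiggs}, and then (when $\psi$ is not injective, the injective case being an immediate contradiction, and $\psi\neq 0$ by Lemma~\ref{nonzero}) saturates $\ker\psi$ in $\sT_Y$ to get an honest rank-one foliation to which $\gamma(\CC)$ is tangent; log general type of $(Y,D)$ from \cite{PS15} and McQuillan--El Goul \cite{McQ98}, \cite{El03} then conclude. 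Your route would instead need a nonzero \emph{symmetric} log differential with values in the dual of a big line bundle: in the good-minimal-model case the Viehweg--Zuo-type sheaf produced via \cite{PS15} naturally sits in a tensor power $(\Omega^1_Y(\log D))^{\otimes m}$, and its symmetrization may vanish, while the order-one fundamental vanishing theorem only sees symmetric differentials; you would also have to handle vertical components of the resulting divisor in $\PP(\sT_Y(-\log D))$ and pass to a cover to turn the multi-foliation into a foliation. None of this is fatal, but it is exactly the work the paper's argument is designed to avoid, so it must be carried out rather than cited.

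Part~(2) as proposed has a genuine gap. The relative MMP does \emph{not} produce, even after shrinking the base, a \emph{smooth} family of \emph{smooth} minimal manifolds: for fibers of dimension at least three the minimal models are in general singular (terminal) and cannot be fitted into a smooth projective morphism, whereas Theorem~\ref{thm:MaxVar} is stated precisely for smooth families whose fibers are minimal \emph{manifolds} of general type; this is why minimality can be dropped only for families of surfaces in the Introduction, and why Theorem~\ref{thm:MainGG}\ref{item:GG2} cannot be reduced to Theorem~\ref{thm:MaxVar} this way. In addition, your promotion from $V_0$ to $V$ does not work as stated: an entire curve in $V$ need not factor through $V_0$, so a bound on $\Exc(V_0)$ says nothing about it, and the ``degeneration locus of the pseudometric'' you invoke is not constructed anywhere --- the metric of \S\ref{subsect:Sing}--\S\ref{subsect:AS} is built along a single given $\gamma$, and Proposition~\ref{prop:BHHiggs} is a curve-by-curve statement, not a global negatively curved pseudometric on $V$. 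The paper's actual argument for \ref{item:GG2} is much softer and uses only general type (no minimality): with $\wtilde V_0\subseteq V_0$ the open set from Koll\'ar's Lemma~\ref{lem:FiniteMap} over which the map $g$ is finite, any non-constant entire curve meeting $\wtilde V_0$ has, by part~\ref{item:GG1}, Zariski closure equal to a quasi-projective curve $C$ over which the restricted family still has maximal variation; by \cite{Vie-Zuo01} such a $C$ cannot be $\PP^1$, $\CC$, $\CC^*$ or elliptic, contradicting the existence of a Zariski dense entire curve in $C$, whence $\Exc(V)\subseteq V\smallsetminus\wtilde V_0$.
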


As a consequence of Theorem~\ref{thm:MainGG}, we can extend 
Theorem~\ref{thm:BHMain} to the case of moduli of polarized manifolds,  not necessarily of general type, 
as long as $V$ is two-dimensional.

\begin{corollary}
\label{cor:MainBH}
Let $V$ be a quasi-projective surface admitting a morphism 
$\sigma\colon V \to P_h$ induced by a smooth family $f_U\colon U \to V$ in $\sP_h (V)$.
If $\sigma$ is quasi-finite, then $V$ is Brody hyperbolic.
\end{corollary}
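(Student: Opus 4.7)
My plan is to combine Theorem~\ref{thm:MainGG}(1) with the already-known one-dimensional hyperbolicity result of Viehweg-Zuo. The idea is to use algebraic degeneracy to trap any entire curve inside a one-dimensional subvariety, and then invoke the dimension-one theory on its normalization.

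First I would verify that Theorem~\ref{thm:MainGG}(1) applies. By the definition of $\sP_h$ recalled in the introduction, the fibers of $f_U$ have semiample canonical bundle, hence are their own good minimal models; in particular the geometric generic fiber admits a good minimal model. The quasi-finiteness of $\sigma\colon V\to P_h$ implies that the moduli map is generically finite, which is exactly the maximal variation condition. Hence, for any entire curve $\gamma\colon \CC \to V$, Theorem~\ref{thm:MainGG}(1) forces the image $\gamma(\CC)$ to be non-Zariski-dense. Since $\dim V = 2$, the Zariski closure of $\gamma(\CC)$ is either a point---in which case $\gamma$ is already constant---or an irreducible algebraic curve $C \subset V$.

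In the latter case, I would pass to the normalization $\nu\colon \widetilde C \to C$. Since $\CC$ is normal, the map $\gamma\colon\CC \to C$ factors uniquely through a holomorphic lift $\widetilde\gamma\colon \CC \to \widetilde C$. Pulling back the original family along $\nu$ produces a smooth family $U\times_V \widetilde C \to \widetilde C$ in $\sP_h(\widetilde C)$, whose induced moduli map is the composition $\sigma\circ\nu$. This composition is still quasi-finite since $\nu$ is finite and $\sigma$ is quasi-finite, so the restricted family retains maximal variation. The one-dimensional case treated by Viehweg-Zuo~\cite{Vie-Zuo01}, as recalled in the introduction, then forces $\widetilde C$ to be Brody hyperbolic, so $\widetilde\gamma$ must be constant and hence so is $\gamma$.

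The main substantive obstacle is Theorem~\ref{thm:MainGG}(1) itself; granted that result, the reduction outlined above is essentially formal. The only minor technicality is existence of the lift $\widetilde\gamma$, which follows from the universal property of normalization applied to holomorphic maps from a normal source.
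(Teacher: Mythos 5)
Your argument is correct and is essentially the paper's own proof: both reduce to showing algebraic degeneracy of entire curves via Theorem~\ref{thm:MainGG}, Item~\ref{item:GG1}, and then handle the one-dimensional Zariski closure of the image using \cite[Theorem~0.1]{Vie-Zuo01}. Your normalization-and-lifting step is just the curve case of the reduction the paper performs through Proposition~\ref{prop:reduction} and Lemma~\ref{lem:BlowUp}.
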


\subsection{Outline of the argument}
The paper follows the beautiful strategy towards proving hyperbolicity for parameter spaces that was developed
in the series of works of Viehweg-Zuo \cite{Vie-Zuo01},  \cite{VZ02}, \cite{Vie-Zuo03a}. 
It relies also on the extension to Hodge modules provided in \cite{PS15} of some Hodge-theoretic constructions in these papers, which in turns enables the level of generality we consider.
Here are the key steps; in each of them we describe what is the new input needed in order to go beyond the canonically polarized case in \cite{Vie-Zuo03a}.

(1) First, one constructs a special Hodge theoretic object on a compactification $Y$ of (a birational model of) the base $V$, namely a graded subsheaf 
$(\sF_\bullet, \theta_\bullet)$ of a Higgs bundle $(\sE_\bullet, \theta_\bullet)$ associated to a Deligne canonical extension of a 
variation of Hodge structure (VHS) supported outside of a simple normal crossing divisor $D + S$, where $D = Y \smallsetminus V$. The system $\sF_\bullet$ encodes the data of maximal variation and has positivity properties due to general Hodge theory. 

A large part of the construction follows ideas from \cite{PS15}; a key ingredient is the use of Hodge module extensions of VHS, necessary especially 
when the fibers are no longer assumed to have semiample canonical bundle. A detailed discussion of the construction can be found in \cite[Introduction and \S2]{PS15} (see also \cite{Popa} for an overview).\footnote{We also take the opportunity in the Appendix
to write down a reduction step to the simple normal crossings case; this was stated in \cite{Vie-Zuo03a} and \cite{PS15} in the respective settings,  but the concrete details were not included.}
However, we make some modifications that lead to an a priori slightly different Higgs sheaf 
$(\sF_\bullet, \theta_\bullet)$; the reason is that we crucially need the induced map $\sT_Y \to \sF_0^{\vee} \otimes \sF_1$ to  
coincide  generically with the Kodaira-Spencer map of the original family. This can be accomplished when the fibers are minimal of general type by appealing to a vanishing theorem due to Bogomolov and Sommese. (The construction for canonically polarized fibers in  \cite{Vie-Zuo03a} appeals to Kodaira-Nakano vanishing, which may fail to hold in this context.)
We note that this is the only point in the paper where it is necessary to work with minimal varieties of general type, 
and which needs to be overcome in order to answer the Viehweg-Zuo question in the arbitrary polarized case.

Given a holomorphic map $\gamma \colon \CC \to V$, 
this construction eventually allows us to produce, for each $m \ge 0$,  morphisms 
$$\tau_m \colon \sT_{\CC}^{\otimes m} \longrightarrow \gamma^* (\sL^{-1} \otimes \sE_m),$$
where $\sL$ is a big and nef line bundle on $Y$, positive on $V$, and $\sE_\bullet$ is the Higgs bundle mentioned above. 
This is all done in \S\ref{sect:HodgeModules}.

(2) For the next step, in the case of Viehweg's hyperbolicity conjecture the point was to apply a powerful
criterion detecting the log general type property, due to Campana-P\u aun \cite{CP16}. In the present case of Brody hyperbolicity, 
this step is by contrast of an analytic, and in some sense more elementary flavor. 
Using the relationship with the Kodaira-Spencer map mentioned above, one shows that for some
$m \ge 1$ the map $\tau_m$ factors through 
$$\tau_m \colon \sT_{\CC}^{\otimes m} \longrightarrow \gamma^* \sL^{-1} \otimes \sN_{(\gamma, m)},$$
where $ \sN_{(\gamma, m)}$ is defined as the kernel of the generalized Kodaira-Spencer map
$$\gamma^*\sE_m \longrightarrow \gamma^* \sE_{m+1} \otimes \Omega_{\CC}^1 (P),$$
with $P = \gamma^{-1} (S)$.  As in \cite{Vie-Zuo03a}, 
we use this, together with results about the curvature of Hodge metrics, in order to construct  a sufficiently negative singular 
metric on $\CC$ which violates the Ahlfors-Schwarz inequality. 

The relaxation of the assumption on the fibers of the family again creates technical difficulties compared to the situation in 
\cite{Vie-Zuo03a}, where one could work with Hodge theoretic objects with finite monodromy around the components of $S$. We consider
instead a further perturbation along $S$, which allows us to construct the singular metric we need using only the well-known growth estimates for Hodge metrics at the boundary given in \cite{Sch73} and \cite{CKS}. This does not require any further knowledge about the monodromy, 
and so has the advantage of giving a slightly simplified argument, in a more general situation.
All of this is discussed in  \S\ref{subsect:AS}--\S\ref{subsect:Brody}.

(3) When the base $V$ of the family is a surface, one does not need to appeal to the connection with the Kodaira-Spencer map mentioned in (1). Consequently the requirement that the fibers be minimal of general type, or even have semiample canonical bundle, can be dropped, noting however that for the Hodge theoretic constructions we now necessarily have to use 
the more abstract Hodge module version. Instead, we follow a different approach by using the map $\tau_1$ in order to produce a foliation on $V$ such that  $\gamma (\CC)$ is contained in one of its leaves. Given that by \cite{PS15} we know that $V$ is of log general type, we can then appeal to a result of McQuillan \cite{McQ98}  on the degeneracy of such entire curves, and to an extension to the logarithmic case in \cite{El03}, in order to obtain a contradiction. This is the subject of \S\ref{subsect:GGLocus}.

\subsection{Acknowledgements} 
We thank  Laura DeMarco, Ya Deng, Henri Guenancia, S\'andor Kov\'acs, Mihai P\u aun, Erwan Rousseau, Christian Schnell  and Sai-Kee Yeung for answering our questions and for useful suggestions.
The authors owe a special thanks to Ariyan Javanpeykar, Ruiran Sun and Kang Zuo
for pointing out a mistake in an earlier version of this paper.

\section{Hodge-theoretic constructions}
\label{sect:HodgeModules}

\subsection{Relative (graded) Higgs sheaves}
\label{subsect:Higgs}
We start with a brief discussion of Higgs sheaves with logarithmic poles. We consider the relative setting, which will be necessary for technical 
reasons later on, though most of the time the constructions are needed in the absolute setting.
Suppose $X$ and $Y$ are smooth quasi-projective varieties,  and $f \colon X\to Y$ is a smooth morphism of relative dimension $d$, with $D$ a reduced relative normal crossing divisor over $Y$. 

Recall that an $f$-relative graded Higgs sheaf with log poles along $D$ is a pair $(\sE_\bullet,\theta_\bullet)$ such that 
\begin{enumerate}
\item{$\sE_\bullet$ is a $\ZZ$-graded $\sO_X$-module, with grading bounded from below.}
\item{$\theta_\bullet$ is a grading-preserving $\sO_X$-linear morphism 
\[\theta_\bullet: \sE_\bullet\rarr \Omega^1_{X/Y}(\log D)\otimes\sE_{\bullet+1}\]
satisfying $\theta_\bullet\wedge\theta_\bullet=0$}, where $\Omega^1_{X/Y}(\log D)$ is the sheaf of relative 1-forms with logarithmic poles along $D$; it is called the Higgs field of the sheaf.
\end{enumerate}
A (relative) Higgs sheaf is called a (relative) Higgs bundle if it consists of $\sO_X$-modules that are locally free of finite rank. If $f$ is trivial, then we get the usual notions of a Higgs sheaf or Higgs bundle. The standard example  is the Hodge bundle associated to a variation of Hodge structure (VHS). More generally, for
a VHS $\V$ on $Y\smallsetminus D$, with quasi-unipotent monodromy along the components of $D$, the Deligne extension of the VHS across $D$ with eigenvalues in $[0, 1)$ is a logarithmic VHS, i.e. the extension of the flat bundle is locally free with a flat logarithmic connection, and the extension of the filtration is a filtration by subbundles; see \cite[Prop.~I.5.4]{Deligne70} and \cite[(3.10.5)]{SaiMHM}; see also \cite[2.5]{Kol86}. Hence its generalized Hodge bundle  
$(\sE_\bullet , \theta_{\bullet})$ is a logarithmic Higgs bundle.

We denote by $\shTA_{X/Y}(-\log D)$ the sheaf of relative vector fields with logarithmic zeros along $D$, and consider  its symmetric algebra 
$$\shA_{X/Y}(-\log D) : =\Sym\shTA_{X/Y}(-\log D)$$ 
(or $\shA_{X/Y}^\bullet(-\log D)$ if we want to emphasize its grading). When $D=0$ and $f$ is trivial, we have 
$\shA_X=\grf\shD_X$, where $\shD_X$ is the sheaf of holomorphic differential operators with the order filtration. We have inclusions of graded $\sO_X$-algebras
\[\shA_{X/Y}(-\log D)\hookrightarrow\shA_{X/Y}\hookrightarrow\shA_{X}
\,\,\,\, {\rm and} \,\,\,\,
\shA_{X/Y}(-\log D)\hookrightarrow\shA_{X}(-\log D)\hookrightarrow\shA_{X}.\]
We will consider graded modules over these sheaves of rings. For instance, the associated graded of a filtered $\shD_X$-module (resp. of a filtered vector bundle with  flat connection with log poles along $D$) is an $\shA_{X}^\bullet$ (resp. $\shA_{X}^\bullet (-\log D)$)-module.  The following reinterpretation of the definitions allows us to use relative Higgs sheafs and graded $\shA_{X/Y}(-\log D)$-modules interchangeably.  

\begin{lemma}
The data of a relative Higgs sheaf $(\sE_\bullet,\theta_\bullet)$ with log poles along $D$ is equivalent to that of a graded
 $\shA_{X/Y}^\bullet (-\log D)$-module structure on $\sE_\bullet$, extending the $\sO_X$-module structure. 
 \end{lemma}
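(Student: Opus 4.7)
The plan is to establish the equivalence by unpacking both structures into their essential data, showing that the degree-raising action of relative logarithmic vector fields is what is common to both descriptions, and that the Higgs integrability condition matches the commutativity built into the symmetric algebra.

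First I would observe that since $\shA_{X/Y}^\bullet(-\log D) = \Sym\shTA_{X/Y}(-\log D)$ is generated in degree $1$ over $\sO_X$, a graded $\shA_{X/Y}^\bullet(-\log D)$-module structure on $\sE_\bullet$ extending the $\sO_X$-structure is determined by an $\sO_X$-linear action
\[
\shTA_{X/Y}(-\log D)\otimes \sE_\bullet \rarr \sE_{\bullet+1}
\]
subject to the requirement that two such actions commute. By the standard $\sO_X$-linear adjunction between $\shTA_{X/Y}(-\log D)$ and $\Omega^1_{X/Y}(\log D)$ (these being locally free and dual to each other), such a morphism is the same datum as an $\sO_X$-linear map
\[
\theta_\bullet: \sE_\bullet \rarr \Omega^1_{X/Y}(\log D)\otimes \sE_{\bullet+1}.
\]
Thus at the level of ``underlying morphisms'', the two pieces of data coincide.

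Next I would verify that the integrability condition on $\theta_\bullet$ translates to the symmetry relation. Given local logarithmic vector fields $\xi,\eta \in \shTA_{X/Y}(-\log D)$ and writing $\theta_\xi: \sE_\bullet \to \sE_{\bullet+1}$ for the contraction with $\xi$, the definition of $\theta_\bullet\wedge\theta_\bullet$ gives that its vanishing amounts precisely to
\[
\theta_\xi\circ\theta_\eta = \theta_\eta\circ\theta_\xi
\]
for all such $\xi,\eta$. This is exactly the relation imposed by $\shTA_{X/Y}(-\log D)\cdot\shTA_{X/Y}(-\log D)\subset\Sym^2\shTA_{X/Y}(-\log D)$ factoring through symmetric tensors, which is the only non-tautological relation in the graded algebra $\shA_{X/Y}^\bullet(-\log D)$.

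Finally I would note that the argument is local on $X$, so one may work in a coordinate patch where $\shTA_{X/Y}(-\log D)$ is free with an explicit basis (logarithmic derivatives along the branches of $D$ and ordinary relative derivatives otherwise), making the passage between the two descriptions entirely formal. The only subtlety to be careful about is the compatibility of gradings and the choice of sign conventions in the identification $(\Omega^1_{X/Y}(\log D))^\vee = \shTA_{X/Y}(-\log D)$; no genuine obstacle arises, and both directions of the equivalence can be written down explicitly. I do not expect any step to be substantive, the lemma being a standard reformulation recorded here for later use.
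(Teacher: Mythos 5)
Your proposal is correct and is exactly the intended content: the paper states this lemma without proof, as a "reinterpretation of the definitions," and your unpacking — generation of $\Sym\shTA_{X/Y}(-\log D)$ in degree one, duality with $\Omega^1_{X/Y}(\log D)$, and the identification of $\theta_\bullet\wedge\theta_\bullet=0$ with commutativity of the contracted operators, i.e.\ with the symmetric-algebra relations — is the standard formal verification the authors had in mind. No gap.
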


The Higgs field $\theta_\bullet$ induces a complex of graded $\sO_X$-modules, de Rham complex
\[
{\rm DR}_{X/Y}^D  (\sE_\bullet) := \big[\sE_\bullet\to \Omega^1_{X/Y}(\log D)\otimes\sE_{\bullet+1}\to \cdots\to  \Omega^d_{X/Y}(\log D)\otimes\sE_{\bullet+d}\big]
\]
and we have
\[{\rm DR}_{X/Y}^D(\sE_\bullet) \simeq {\rm DR}_{X/Y}^D\big(\shA_{X/Y}^\bullet(-\log D)\big)\otimes_{\shA_{X/Y}^\bullet(-\log D)}\sE_\bullet.\]
\begin{definition}[Pull-back of Higgs bundles] \label{pullbackhb}
Let $\sE_\bullet$ be a relative Higgs bundle on $X$, and $\gamma\colon B \to X$  a holomorphic map from a 
complex manifold $B$, such that the support 
$E$ of $\gamma^{-1}(D)$ is relative normal crossing over $Y$ with respect to the induced map $B\to Y$. 
Then the natural $\sO_X$-linear morphism $\sT_{B/Y}(-\log E)\to \gamma^*\sT_{X/Y}(-\log D)$ induces a morphism 
\[\sA_{B/Y}(-\log E)\to \gamma^*\sA_{X/Y}(-\log D)\]
of graded $\sO_B$-algebras. Therefore, $\gamma^*\sE_\bullet$ is a graded $\sA_B(-\log E)$-module, and in particular a relative Higgs bundle on $B$ with Higgs field induced by that of $\sE_\bullet$.
\end{definition}

\subsection{Hodge modules for rank $1$ unitary representations on quasi-projective varieties}
\label{subsect:FilterL}
We discuss Hodge modules for rank $1$ unitary representations,  needed in what follows. 
We fix a line bundle $\sB$ on a smooth quasi-projective variety $X$, and assume that 
$$
\sB^{m} \simeq\sO_X(E),
$$
for some $m\in \N$ and an effective divisor $E=\sum_i a_i D_i$ with simple normal crossing support. 
We denote $D=E_\text{red}$. 
It is well known that, for every $0< i <m$ and every divisor $E'$ supported on $D$, 
the line bundle $\sB^{-i}(E')$ admits a flat connection with logarithmic poles along $D$.
As in \cite[\S3]{EV92}, we set
$$
\sB^{(-i)} =\sB^{-i} \; \big(\sum_i\lfloor\frac{a_i}{m}\rfloor \cdot D_i\big)
$$
the Deligne canonical extension of $\sB^{-i}|_{X\smallsetminus D}$, which is a flat unitary line bundle on $X\smallsetminus D$ coming from a unitary representation of the fundamental group. We also use the notation 
$$\sB^{-i} (*D) = \bigcup_{k \ge 0} \sB^{-i} (kD)$$
for the sheaf of sections of $\sB^{-i}$ with poles of arbitrary order along $D$.
We define  filtrations on $\sB^{(-i)}$, $\sB^{(-i)}(D)$ and $\sB^{-i} (*D)$ by:

\[F_p\sB^{(-i)}(C)=
\begin{cases}
0& \text{if }p<0\\
\sB^{(-i)}(C)& \text{if } p\ge 0,
\end{cases}\]
where $C$ is either $0$ or $D$, and
\begin{equation}\label{fl}
F_p\sB^{-i} (*D)=
\begin{cases}
0& \text{if } p<0\\
\sB^{(-i)}\big((p+1)D\big)& \text{if }p\ge 0.
\end{cases}
\end{equation}
With these filtrations, $\sB^{(-i)}(D)$ is a filtered line bundle with a flat connection with log poles along $D$, and 
$\sB^{-i}(*D)$ is a filtered $\shD_X$-module. Note that in particular we will always consider $\shO_X$ with the trivial filtration $F_k \shO_X = \shO_X$ for $k \ge 0$, and $0$ otherwise, so that $\gr^F_\bullet \sO_X \simeq \sO_X$.

By \cite[(3.10.3) and (3.10.8)]{SaiMHM}, we know that $(\sB^{-i}(*D),\fddot)$ is a direct summand of the filtered $\shD_X$-module underlying $\pi_* {\Q^H_Z}[\dim Z]$, the direct image of the trivial Hodge module on $Z$, where $\pi\colon Z\to X$ is the $m$-th cyclic cover branched along the divisor $E$. 

Note that $\mathrm{gr}^F_{\bullet}\sB^{(-i)}(D)$ is a graded $\shA_{X}^\bullet (-\log D)$-module, while $\mathrm{gr}^F_{\bullet}\sB^{-i}(*D)$ is a graded $\shA_{X}^\bullet$-module. Moreover, the natural inclusions
\[\mathrm{gr}^F_{\bullet}\sB^{(-i)}\hookrightarrow\mathrm{gr}^F_{\bullet}\big(\sB^{(-i)}(D)\big)\hookrightarrow \mathrm{gr}^F_{\bullet}\sB^{-i}(*D)\]
preserve the Higgs structure. We have the following comparison result:

\begin{proposition}
\label{prop:relcomp}
Assume that $f\colon X \to Y$ is a smooth projective morphism  of relative dimension $d$ between smooth quasi-projective varieties, and $D$ is a divisor on $X$ which is relatively normal crossing over $Y$. Then the natural morphism 
$${\rm DR}_{X/Y}^D \big(\mathrm{gr}^F_{\bullet}\sB^{(-i)}\big)\  \longrightarrow {\rm DR}_{X/Y}\big(\mathrm{gr}^F_{\bullet}\sB^{(-i)}(*D)\big)$$
is a quasi-isomorphism of complexes of graded $\sO_X$-modules.
\end{proposition}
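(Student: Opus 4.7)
My plan is to reduce the statement to a local computation. Since the natural map is a morphism of complexes of $\sO_X$-modules, and the claim is that this map is a quasi-isomorphism, we may work in local analytic (or étale) coordinates. I would choose coordinates $x_1, \ldots, x_d$ along the fibers of $f$ together with $y_1, \ldots, y_m$ pulled back from $Y$, so that $D = V(x_1 \cdots x_r)$. In these coordinates $\sB^{(-i)}$ is trivialized by a section $e$ on which the relative log connection takes the form
\[
\nabla_{\mathrm{rel}}\, e = \bigg(\sum_{j=1}^r \lambda_j \frac{dx_j}{x_j}\bigg) \otimes e,
\]
with $\lambda_j \in [0,1) \cap \Q$ (forced by the Deligne normalization).

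In this setup both sides admit transparent descriptions. The left side is particularly simple: because $\gr^F_\bullet \sB^{(-i)}$ is concentrated in graded degree $0$, its relative log de Rham complex has zero differentials and, in graded degree $-k$, consists of the single sheaf $\Omega^k_{X/Y}(\log D) \otimes \sB^{(-i)}$ placed in cohomological position $k$. The right side is richer: $\gr^F_p \sB^{(-i)}(*D)$ is non-zero for every $p \ge 0$, locally isomorphic to $\sB^{(-i)}/(x_1 \cdots x_r) \sB^{(-i)}$ via the identification $\frac{g}{(x_1\cdots x_r)^{p+1}}e \leftrightarrow \bar g \cdot e_p$, and a direct computation shows that the Higgs field $\theta\colon \gr^F_p \to \Omega^1_{X/Y} \otimes \gr^F_{p+1}$ is given, up to this identification, by contraction against $\sum_{j=1}^r (\lambda_j - p - 1)\frac{dx_j}{x_j}$. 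The hypothesis $\lambda_j \in [0,1)$ is essential: it forces each coefficient $\lambda_j - p - 1$ to be non-zero for all $p \ge 0$, which is the key numerical input driving the acyclicity below.

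The comparison map in graded degree $-k$ is the inclusion $\Omega^k_{X/Y}(\log D)\otimes \sB^{(-i)} \hookrightarrow \Omega^k_{X/Y} \otimes \gr^F_0 \sB^{(-i)}(*D)$ at cohomological position $k$, obtained by rewriting $\frac{dx_{i_1}}{x_{i_1}} \wedge \cdots \wedge \frac{dx_{i_k}}{x_{i_k}} \otimes e$ as $dx_{i_1}\wedge\cdots\wedge dx_{i_k}\otimes \frac{e}{x_{i_1}\cdots x_{i_k}}$. To check it is a quasi-isomorphism in a given graded degree, I would stratify the relevant graded piece according to which subset of $\{x_1,\ldots,x_r\}$ divides the pole factor; on each stratum the right-hand complex becomes a Koszul complex whose differentials are governed by the scalars $\lambda_j - p - 1$, so the non-vanishing of these scalars yields the expected acyclicity outside cohomological position $k$, while the cohomology at position $k$ matches the left side via the map above.

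The main technical hurdle is carrying out this Koszul bookkeeping cleanly across all graded degrees. A cleaner alternative, which I might prefer in the write-up, is to appeal to the absolute form of this comparison (a theorem of M.~Saito, underlying the cited \cite[(3.10)]{SaiMHM}), and then descend to the relative statement via the filtration on $\Omega^\bullet_X(\log D)$ by the number of $f^*\Omega^1_Y$ factors: since $D$ is relatively normal crossing, the short exact sequence $0 \to f^*\Omega^1_Y \to \Omega^1_X(\log D) \to \Omega^1_{X/Y}(\log D) \to 0$ is locally split, so the filtration is compatible with the comparison map on both sides, and the zeroth associated graded recovers precisely the relative quasi-isomorphism.
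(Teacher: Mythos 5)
Your main route---a direct computation in local coordinates, graded degree by graded degree, driven by the non-vanishing of the scalars $\lambda_j-p-1$ for $p\ge 0$ forced by the eigenvalue normalization in $[0,1)$---is viable and is in essence the same Koszul-type argument the paper uses, only organized locally rather than globally. The paper packages the bookkeeping you postpone: it forms $\sC_\bullet=\shA^\bullet_{X/Y}\otimes_{\gr^F_\bullet\sO_X}\grf\big(\sB^{(-i)}(D)\big)$, resolves $\sN_\bullet=\shA^\bullet_{X/Y}\otimes_{\shA^\bullet_{X/Y}(-\log D)}\grf\big(\sB^{(-i)}(D)\big)$ by a Koszul complex built from a local frame of $\sT_{X/Y}(-\log D)$ (regularity of the sequence follows because $\grf\big(\sB^{(-i)}(D)\big)$ is locally free of rank one over $\gr^F_\bullet\sO_X$), applies ${\rm DR}_{X/Y}$ together with the contraction isomorphism $\omega_{X/Y}(D)\otimes\bigwedge^{d-p}\sT_{X/Y}(-\log D)\simeq\Omega^p_{X/Y}(\log D)$, and then invokes the identification $\sN_\bullet\simeq\grf\,\sB^{-i}(*D)$ (Bj\"ork), which is exactly where your numerical input is hiding. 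If you do write up the hands-on version, fix two points: $\gr^F_0\sB^{-i}(*D)=\sB^{(-i)}(D)$ is a line bundle, not the torsion sheaf $\sB^{(-i)}/(x_1\cdots x_r)\sB^{(-i)}$ (only the pieces with $p\ge1$ are supported on $D$), and under your trivializations the graded maps are multiplication by $(\lambda_j-p-1)\prod_{l\ne j}x_l$ followed by $dx_j\wedge(-)$, so the acyclicity in positive graded degrees and the exactness at position $k$ in degree $-k$ genuinely require the multi-variable Koszul verification; it works (the one-variable case is immediate), but it is the entire content of the proof rather than routine bookkeeping.

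The ``cleaner alternative'' is the step that would fail, and you should not prefer it in the write-up. Passing from Saito's absolute comparison to the relative one by taking the zeroth associated graded of the filtration of $\Omega^\bullet_X(\log D)$ by the number of $f^*\Omega^1_Y$-factors goes in the wrong logical direction: the associated graded of a quasi-isomorphism of filtered complexes need not be a quasi-isomorphism unless the map is a \emph{filtered} quasi-isomorphism, which is precisely what would have to be proved. The standard use of that Koszul filtration is the reverse implication---the relative (graded) statement yields the absolute one via the induced spectral sequence---so there is no formal descent from the absolute theorem. Either carry out the local computation in full, or follow the induced-module formulation as in the paper's proof.
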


\begin{proof}
The absolute case was proved in \cite[\S3.b]{SaiMHM} in a more general setting. The relative case is similar; we sketch
the proof for completeness. 

We define graded sheaves $\sC_{\bullet}$ and $\sN_{\bullet}$ by
\[\sC_\bullet : =\shA_{X/Y}^\bullet\otimes_{\gr^F_\bullet \sO_X} \grf \big(\sB^{(-i)}(D)\big)\]
and 
\[\sN_\bullet : =\shA_{X/Y}^\bullet\otimes_{\shA^\bullet_{X/Y}(- \log D)} \grf \big(\sB^{(-i)}(D)\big).\]
By definition $\sC_\bullet$ is a graded $\big(\shA_{X/Y}^\bullet$-$\shA_{X/Y}^\bullet(- \log D)\big)$-bimodule.
(The $\shA_{X/Y}(- \log D)$-module 
structure is induced by the product rule; that is, locally $x_i\partial_{x_i}\cdot(\nu\otimes l)=x_i\partial_{x_i}\cdot\nu\otimes l-\nu\otimes x_i\partial_{x_i}\cdot l$, if $\nu\otimes l$ is a section of $\sC_\bullet$.)
Assume now that $\sT_{X/Y}(- \log D)$ is freely generated locally by 
$$
\partial_{x_1},\dots, \partial_{x_i}, x_{i+1}\partial_{x_{i+1}},\dots, x_d\partial_{x_d}.
$$ 
The sequence of actions of these elements on $\sC_\bullet$ (via the $\shA_{X/Y}(- \log D)$-module structure described above) gives rise to a Koszul-type complex. Written in a coordinate free way,  this is a complex of $\shA_{X/Y}^\bullet$-modules
\[
{\cB}^\bullet_\bullet= \big[\sC_{\bullet-d}\otimes  \bigwedge^{d}\sT_{X/Y}(- \log D)\to \sC_{\bullet-d+1}\otimes \bigwedge^{d-1}\sT_{X/Y}(- \log D)\to \ldots\to \sC_{\bullet}\big].
\]
Using the fact that $\grf \big(\sL^{(-i)}(D)\big)$ is locally free of rank $1$ over 
$\gr^F_\bullet \sO_X$, one can check that this sequence is regular; therefore, the natural morphism 
\[{\cB}^\bullet_\bullet\rarr H^0 {\cB} ^\bullet_\bullet = \frac{\sC_\bullet}{\sum_{j =1}^i \partial_{x_j} \sC_\bullet + 
\sum_{j = i+1}^d x_j \partial_{x_j} \sC_\bullet} \simeq \sN_\bullet\]
is a quasi-isomorphism of complexes of graded $\shA_{X/Y}^\bullet$-modules. The exactness of the de Rham functor implies that the induced morphism
\[{\rm DR}_{X/Y}({\cB}^\bullet_\bullet)\rarr {\rm DR}_{X/Y}(\sN_\bullet)\]
is a quasi-isomorphism as well.
Moreover,  one also sees that the natural morphism 
\[{\rm DR}_{X/Y}\big(\sC_{\bullet-d+p}\otimes \bigwedge^{d-p}\sT_{X/Y}(-\log D)\big)
\rarr \gr^F_{\bullet+p}\sB^{(-i)}\otimes \Omega^{p}_{X/Y}(\log~D)\]
is a quasi-isomorphism, thanks to the natural isomorphism given by contraction
\[\omega_{X/Y}(D)\otimes\bigwedge^{d-p}\sT_{X/Y}(-\log D)\simeq\Omega^{p}_{X/Y}(\log~D),\] 
and the fact that ${\rm DR}_{X/Y} \big( \sA_{X/Y}^\bullet \big)$ is quasi-isomorphic to $\omega_{X/Y}$.
Therefore, we find that  
${\rm DR}_{X/Y}( \cB^\bullet_\bullet)$ and ${\rm DR}_{X/Y}^D(\mathrm{gr}^F_{\bullet}\sB^{(-i)})$
are quasi-isomorphic.
We now conclude by noting that there is an isomorphism of $\shA_{X/Y}^\bullet$-modules
$$
\sN_\bullet\simeq \grf \sL^{-i} (*D);
$$
see for instance \cite[Prop. 4.2.18]{Bjork} (where it is stated locally, for more general $\Dmod$-modules).
 \end{proof}

\subsection{Hodge modules and branched coverings} 
\label{subsect:Hodge}
This section is essentially a review of the constructions in \cite[\S2.3 and 2.4]{PS15}, but with a twist which is important for 
the applications in this paper.  We assume that we have a morphism of smooth projective varieties $f\colon X\to Y$, with connected fibers, and 
with $\dim Y = n$ and $\dim X = n + d$. Let $\sA$ be a line bundle on $Y$,  and define
$$
\sB: =\omega_{X/Y}\otimes f^*\sA^{-1}.
$$
We make the following assumption:
\begin{equation}\label{gsection} 
\textup{There exists $0 \neq s \in H^0 (X, \sB^m)$  for some $m>0$.}
\end{equation}

The section $s$ defines a branched cover $\psi\colon X_m\to X$ of degree $m$. 
Let $\delta\colon Z\rarr X_m$ be a desingularization of the normalization of $X_m$, which is irreducible if $m$ is chosen to be 
minimal,  and set $\pi=  \psi\circ \delta$ and $h=f\circ \pi$, as in the diagram

$$
  \xymatrix{
Z  \ar@/^9mm/[rrr]^{\pi}  \ar[rr]^{\delta}   \ar[drrr]_{h} &&   X_m  
 \ar[r]^{\psi}  \ar[dr] &   X\ar[d]^{f} \\
&&& Y
}
$$

Let $\shA_Y = \text{Sym } \sT_Y$, with the natural grading, and similarly for $\shA_X$. 
A morphism of graded $\shA_Y$-modules
\begin{equation}\label{eq:theMor1}
\bR f_*(\omega_{X/Y}\otimes_{\sO_X} \sB^{-1} \overset{\bf L}\otimes_{\shA_X}f^*{\shA_Y})\rarr \bR h_*(\omega_{Z/Y}\overset{\bf L}\otimes_{\shA_Z}h^*\shA_Y).
\end{equation}
is constructed in \cite[\S2.4]{PS15}. (We use the notation $\sB^{-1}  \overset{\bf L}\otimes_{\shA_X}f^*{\shA_Y}$ 
as shorthand for $\sB^{-1} \otimes_{\sO_X}  \gr^F_\bullet \sO_X \overset{\bf L}\otimes_{\shA_X}f^*{\shA_Y}$, where we make use of the $\sA_X$-module structure on $\gr^F_\bullet \sO_X \simeq \sO_X$.)
Here we will construct a similar but slightly different morphism, that a priori coincides with the one 
in ($\ref{eq:theMor1}$) only generically.  
The reason for this different construction will become 
clear in \S\ref{subsect:KS}, where we need to compare Hodge sheaves constructed out of 
branched coverings with others that are naturally related to Kodaira-Spencer maps. 

Before starting the construction, recall from \cite[\S3.b]{SaiMHM} that $\sB$ uniquely determines a filtered $\shD_X$-module 
$(\sB^{-1}_*, \fddot)$ with strict support $X$, which extends $(\sB^{-1}|_{X\smallsetminus {\rm div}(s)},\fddot)$,  where the filtration on the latter is the trivial filtration; notice that the filtered $\shD_X$-module is exactly $(\sB^{-1}(*D), \fddot)$, when $D={\rm div}(s)_\text{red}$ is normal crossing.  Moreover, 
$(\sB^{-1}_*, \fddot)$ is a direct summand of the  filtered $\sD_X$-module $\cH^0\pi_+ (\sO_Z, F_\bullet)$.

\begin{lemma} \label{lem:inclusion}
We have a natural inclusion
\[\sB^{-1}\hookrightarrow F_0 \; \sB^{-1}_*.\]
\end{lemma}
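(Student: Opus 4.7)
The plan is to pass to a log resolution where the Hodge filtration on $\sB^{-1}_*$ is explicitly computable, construct the inclusion there by inspection, and then transport it back using Saito's theory of direct images for Hodge modules.

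Concretely, by embedded resolution one may choose a projective birational morphism $\mu\colon \tilde{X}\to X$ which is an isomorphism over $U := X\smallsetminus\Supp\,\mathrm{div}(s)$ and such that $\tilde{E} := \mu^*\,\mathrm{div}(s) = \sum_i \tilde{a}_i\tilde{D}_i$ has simple normal crossing support $\tilde{D}$. Setting $\tilde{\sB} := \mu^*\sB$, the hypotheses of \S\ref{subsect:FilterL} are satisfied on $\tilde{X}$, so the filtered $\shD_{\tilde{X}}$-module $(\tilde{\sB}^{-1}(*\tilde{D}),\fddot)$ is defined and, by \eqref{fl},
\[
F_0\,\tilde{\sB}^{-1}(*\tilde{D}) \;=\; \tilde{\sB}^{(-1)}(\tilde{D}) \;=\; \tilde{\sB}^{-1}\otimes\sO_{\tilde{X}}\Bigl(\tilde{D} + \sum_i \lfloor \tilde{a}_i/m\rfloor\tilde{D}_i\Bigr).
\]
Since the twist is by an effective divisor, there is a tautological inclusion of line bundles $\tilde{\sB}^{-1}\hookrightarrow F_0\,\tilde{\sB}^{-1}(*\tilde{D})$, equal to the identity over $\mu^{-1}(U)$.

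To transport this inclusion down to $X$, note that $(\tilde{\sB}^{-1}(*\tilde{D}),\fddot)$ underlies a pure Hodge module on $\tilde{X}$ with strict support $\tilde{X}$. By uniqueness of the minimal Hodge module extension of $\sB^{-1}|_U$, the strict-support-$X$ direct summand of $\cH^0\mu_+\bigl(\tilde{\sB}^{-1}(*\tilde{D}),\fddot\bigr)$ coincides with $(\sB^{-1}_*,\fddot)$, and Saito's strictness of the Hodge filtration under projective direct image yields a natural inclusion
\[
R^0\mu_*\,F_0\,\tilde{\sB}^{-1}(*\tilde{D}) \;\hookrightarrow\; F_0\,\cH^0\mu_+\bigl(\tilde{\sB}^{-1}(*\tilde{D}),\fddot\bigr).
\]
Applying $R^0\mu_*$ to the inclusion of the previous paragraph, using the projection formula together with $\mu_*\sO_{\tilde{X}} = \sO_X$ to identify $\mu_*\tilde{\sB}^{-1} = \sB^{-1}$, and composing with the projection onto the strict-support-$X$ summand, produces the desired $\sO_X$-linear map $\sB^{-1}\to F_0\,\sB^{-1}_*$; its injectivity is immediate by restricting to $U$, where both sides coincide with $\sB^{-1}|_U$ and the composition is the identity.

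The main technical obstacle is matching up the strict-support-$X$ summand of $\cH^0\mu_+$ with $(\sB^{-1}_*,\fddot)$ in a way compatible with the Hodge filtration, which is not elementary and requires Saito's direct image theorem. An essentially equivalent but more direct route exploits the fact, cited just before the lemma, that $(\sB^{-1}_*,\fddot)$ is already a direct summand of $\cH^0\pi_+(\sO_Z,\fddot)$ via the cyclic cover: from the decomposition $\psi_*\sO_{X_m}=\bigoplus_{i=0}^{m-1}\sB^{-i}$ one extracts a canonical inclusion $\sB^{-1}\hookrightarrow \pi_*\sO_Z$, and Saito's identification of $F_0\,\cH^0\pi_+(\sO_Z,\fddot)$ with $\pi_*\sO_Z$ followed by projection onto the $(\sB^{-1}_*,\fddot)$-summand yields the same inclusion.
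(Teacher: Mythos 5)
Your main argument is correct and is essentially the paper's own proof: pass to a log resolution $\mu$ of $\mathrm{div}(s)$ that is an isomorphism off the divisor, use the explicit filtration \eqref{fl} to obtain the tautological inclusion of the pulled-back line bundle into $F_0$ of the localized module upstairs, and descend via Saito's strictness of direct images together with the projection formula and $\mu_*\sO_{\tilde X}=\sO_X$. The only difference is bookkeeping: the paper asserts the equalities $\mu_+(\sB'^{-1}(*D'),\fddot)=(\sB^{-1}_*,\fddot)$ and $\mu_*F_0\sB'^{-1}(*D')=F_0\sB^{-1}_*$ outright, whereas you use an inclusion into $F_0\,\cH^0\mu_+$, project to the strict-support-$X$ summand, and check injectivity over the open set; both are fine.

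One caution about your closing ``more direct'' variant: the identification $F_0\,\cH^0\pi_+(\sO_Z,\fddot)\simeq\pi_*\sO_Z$ is not correct under the normalization \eqref{fl}. If it held, then since $(\sB^{-1}_*,\fddot)$ is a filtered direct summand of $\cH^0\pi_+(\sO_Z,\fddot)$ one would get $F_0\,\sB^{-1}_*\subseteq\pi_*\sO_Z$; but already in the normal crossing case $F_0\,\sB^{-1}_*=\sB^{(-1)}(D)$ carries the extra twist by the reduced divisor $D$ (likewise $F_0\,\sO_X(*D)=\sO_X(D)$ for the $i=0$ summand), and this is not contained in $\pi_*\sO_Z=\bigoplus_i\sB^{(-i)}$. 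Since that paragraph is only an optional aside, the lemma itself is unaffected, but as written that shortcut does not go through.
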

\begin{proof}
Let $\mu \colon X'\to X$ be a log resolution of the divisor div$(s)$ which is an isomorphism on its complement. 
Define $D' =(\mu^*\text{div}(s))_\text{red}$ 
and $\sB'=\mu^*\sB$. Then, according to the discussion in \S\ref{subsect:FilterL},  
$(\sB'^{-1} (*D'),\fddot)$ defined as in \eqref{fl} is a direct summand of a 
$\shD_{X'}$-module underlying a Hodge module. By the strictness of the direct image functor for Hodge modules, we have 
\[\mu_+(\sB'^{-1} (*D'),\fddot)=(\sB^{-1}_*, \fddot)\]
and 
\[\mu_*F_0\sB'^{-1} (*D')  = F_0\sB^{-1}_*.\]
On the other hand, by construction we have the injection
\[\sB'^{-1}\subset F_0\sB'^{-1} (*D'),\]
and so the statement follows from the projection formula. 
\end{proof}

We now proceed with our construction.
The inclusion in Lemma~\ref{lem:inclusion} induces a morphism of graded $\shA_X$-modules
\[ \sB^{-1} \rarr \grf \sB^{-1}_*,\]
with the trivial graded $\shA_X$-module structure on $\sB^{-1}$.
This in turn induces a morphism
\[\bR f_*(\omega_{X/Y}\otimes_{\sO_X} \sB^{-1}\overset{\bf L}\otimes_{\shA_X}f^*{\shA_Y})\rarr \bR f_*(\omega_{X/Y}\otimes_{\shO_X} \grf \sB^{-1}_*\overset{\bf L}\otimes_{\shA_X}f^*{\shA_Y}).\]
Now the right hand side is a direct summand of the object 
$\bR h_*(\omega_{Z/Y}\overset{\bf L}\otimes_{\shA_Z}h^*\shA_Y)$; indeed, using \cite[Theorem~2.9]{PS-GV}, we have an isomorphism
$$\gr_\bullet^F h_+ (\shO_Z, F_\bullet) \simeq  \bR h_*(\omega_{Z/Y}\overset{\bf L}\otimes_{\shA_Z}h^*\shA_Y),$$
and we combine this with the filtered direct summand inclusion of $(\sB^{-1}_*, \fddot)$ in $\cH^0\pi_+ (\sO_Z, F_\bullet)$. 
Therefore we get an induced morphism
\begin{equation}\label{eq:theMor}
\bR f_*(\omega_{X/Y}\otimes_{\sO_X} \sB^{-1}\overset{\bf L}\otimes_{\shA_X}f^*{\shA_Y})\rarr\bR h_*(\omega_{Z/Y}\overset{\bf L}\otimes_{\shA_Z}h^*\shA_Y),
\end{equation}
which factors through $\bR f_*(\omega_{X/Y}\otimes\grf \sB^{-1}_*\overset{\bf L}\otimes_{\shA_X}f^*{\shA_Y})$. One can check that the morphisms \eqref{eq:theMor1} and \eqref{eq:theMor} coincide over the locus where $h$ is smooth; they are however not necessarily the same globally. 

\medskip

Let now $(\cM, F_\bullet)$ be the filtered $\sD_Y$-module underlying the Tate twist $M(d)$ of the pure polarizable Hodge module $M$ which is the direct summand of 
$\cH^0h_* {\bQ_Z^H}[n+d]$ strictly supported on $Y$. By \cite[Prop.~2.4]{PS15},  we then have that gr$^F_{\bullet} \cM$ is a direct summand of $R^0 h_*(\omega_{Z/Y}\overset{\bf L}\otimes_{\shA_Z}h^*\shA_Y)$.

\begin{definition}
We define a graded $\shA_Y$-module $\shG_\bullet$ as the image of the composition
\[
R^0 f_*(\omega_{X/Y}\otimes_{\sO_X} \sB^{-1}\overset{\bf L}\otimes_{\shA_X}f^*{\shA_Y})\to R^0 h_*(\omega_{Z/Y}\overset{\bf L}\otimes_{\shA_Z}h^*\shA_Y)\to \gr^F_{\bullet} \cM,
\]
where the second morphism is given by projection.
\end{definition}

Recall that $D_f$ denotes the singular locus of $f$. 
We gather the constructions above and further properties in the following result, which is essentially \cite[Thm.~2.2]{PS15}; although as pointed out above the new morphism \eqref{eq:theMor} is constructed slightly differently,  the proof is identical.

\begin{theorem}\label{thm:G-sheaf}
With the above notation, assuming \eqref{gsection}, the coherent graded $\shA_Y$-module $\GF$ satisfies the following properties:
\begin{enumerate}
\item \label{G0}
There is an isomorphism $\shG_0 \simeq \sA$.
\item 
Each $\shG_k$ is torsion-free on $X\smallsetminus D_f$.
\item 
There is an inclusion of graded $\shA_Y$-modules $\GF\subseteq\grf\cM$. 
\end{enumerate}
\end{theorem}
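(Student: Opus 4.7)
The plan is to verify the three properties by essentially mimicking the argument of \cite[Thm.~2.2]{PS15}, after checking that the slightly different construction of the morphism \eqref{eq:theMor} does not alter the conclusions. The key observation that makes this possible is the remark already recorded in the excerpt: the morphisms \eqref{eq:theMor1} and \eqref{eq:theMor} agree on the open locus $Y \smallsetminus D_f$ where $h$ is smooth. Hence any property that is detected by restriction to this open set, or by generic rank and torsion-freeness arguments, transfers verbatim from one setting to the other. Coherence of $\GF$ is automatic, since it is the image of a morphism between coherent graded $\shA_Y$-modules: the source is $R^0f_*(\omega_{X/Y}\otimes_{\sO_X}\sB^{-1}\overset{\mathbf{L}}{\otimes}_{\shA_X}f^*\shA_Y)$, coherent over the noetherian graded ring $\shA_Y$ by proper direct image, and the target $\grf \cM$ is coherent by Saito's theory.

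Property \eqref{G0} in part (3) of the theorem is tautological from the definition, since $\GF$ is defined precisely as an image inside $\grf\cM$.

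For part (1), I would compute the degree-zero piece of the source of \eqref{eq:theMor}. Since $\sB=\omega_{X/Y}\otimes f^*\sA^{-1}$, the degree-zero summand is
\[
R^0f_*\bigl(\omega_{X/Y}\otimes\sB^{-1}\bigr) \,=\, R^0f_*\bigl(f^*\sA\bigr) \,\simeq\, \sA\otimes R^0f_*\sO_X \,\simeq\, \sA
\]
by the projection formula and the fact that $f$ has connected fibers. It then remains to check that the induced map $\sA\to \shG_0$ is an isomorphism. Injectivity follows because $\sA$ is a line bundle and the map is nonzero: over $Y\smallsetminus D_f$, the morphism \eqref{eq:theMor} coincides with \eqref{eq:theMor1}, which is the Hodge-theoretic map computed in \cite[\S2.4]{PS15}, and there the degree-zero component is the natural isomorphism $\sA\xrightarrow{\sim} \gr^F_0\cM|_{Y\smallsetminus D_f}$ coming from the VHS attached to $h$. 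Thus the composition $\sA\to\shG_0\hookrightarrow \grf\cM$ is generically the identity on $\sA$, and since $\sA$ is torsion-free it must be an isomorphism onto its (torsion-free, rank-one) image $\shG_0$.

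For part (2), I would use Saito's theorem that the graded pieces $\gr^F_k\cM$ of the filtered $\sD_Y$-module underlying a pure polarizable Hodge module are locally free on any open set where the corresponding VHS is smooth, applied here to the smooth locus $Y\smallsetminus D_f$ of $h$. Since $\shG_k\hookrightarrow \gr^F_k\cM$ by construction, and a subsheaf of a locally free sheaf is torsion-free, each $\shG_k$ is torsion-free on $Y\smallsetminus D_f$. The main obstacle in the whole argument is really in part (1): one must be certain that replacing \eqref{eq:theMor1} by \eqref{eq:theMor} does not kill the degree-zero identification with $\sA$, and the generic-coincidence argument above is what makes this work.
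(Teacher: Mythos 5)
Your overall strategy---transfer the argument of \cite[Thm.~2.2]{PS15} to the modified morphism \eqref{eq:theMor}, using that it agrees with \eqref{eq:theMor1} wherever the two constructions can be compared---is the same route the paper takes, since the paper simply declares the proof identical to that of \cite[Thm.~2.2]{PS15}. Your treatment of the first and third items is essentially fine: the third is tautological, and for the first, the degree-zero computation $R^0f_*(\omega_{X/Y}\otimes\sB^{-1})\simeq\sA$ plus generic injectivity of $\sA\to\gr^F_0\cM$ does give $\shG_0\simeq\sA$. (Two small corrections there: $\gr^F_0\cM$ has rank $>1$ in general, so the generic degree-zero map is an injection, not ``the natural isomorphism''; and in fact no comparison with \eqref{eq:theMor1} is needed, since injectivity already follows from Lemma~\ref{lem:inclusion}, the split inclusion into $\cH^0\pi_+(\sO_Z,F_\bullet)$, left-exactness of $f_*$, and the fact that projecting onto the strict-support summand can only kill torsion sections, of which the line bundle $\sA$ has none.)

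The genuine gap is in the second item, and it comes from your identification of $Y\smallsetminus D_f$ with the smooth locus of $h$. These loci differ in general: $h=f\circ\pi$, where $\pi$ resolves the $m$-th root covering branched along $\mathrm{div}(s)$, so $h$ degenerates not only over $D_f$ but also over the locus in $Y$ where $\mathrm{div}(s)$ behaves badly relative to $f$---this is exactly the additional divisor $S$, meeting $V$, that the paper must introduce later in the proof of Proposition~\ref{prop:summary}. Consequently your appeal to local freeness of $\gr^F_k\cM$ on the VHS locus only yields torsion-freeness of $\shG_k$ on the (possibly much smaller) open set where $M$ is a variation of Hodge structure, not on $Y\smallsetminus D_f$ as the theorem asserts. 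Nor can this be repaired by asserting that $\gr^F_k\cM$ itself is torsion-free on all of $Y\smallsetminus D_f$: the graded pieces of a pure Hodge module with strict support can acquire torsion along the singular locus of the module (already for the minimal extension of a rank-one unitary local system on a punctured curve with nonintegral residue, $\gr^F_p$ is a skyscraper for $p\geq 1$), and here that singular locus meets $Y\smallsetminus D_f$ along $S$. So establishing torsion-freeness on all of $Y\smallsetminus D_f$ requires an argument about the image $\shG_k$ itself, not just about the ambient sheaf $\gr^F_k\cM$; this is precisely the part of the content that the citation to \cite[Thm.~2.2]{PS15} is carrying and that your proposal does not supply.
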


\subsection{Basic set-up}
\label{subsect:summary}

We consider a smooth family $f_U \colon U \to V$ of projective varieties, whose geometric generic fiber admits a good minimal model. (This includes for instance families of varieties of general type, or of varieties whose canonical bundle is semiample.)
We assume that the family has maximal variation; following the strategy in \cite{Vie-Zuo03a}, together with the technical extensions in \cite{PS15}, our aim in the next two sections is to endow entire curves inside (a birational model of) $V$ with Hodge theoretic objects that will be later used in order to conclude hyperbolicity.

In order to accomplish this, we will use a technical statement about the existence of sections (or the generic global generation) 
for suitable line bundles on a
modification of the family $f_U$. This is proved in the Appendix in \S\ref{sect:appendix}, in Propositions \ref{thm:ggrefine} and \ref{section_SNC}. The idea and most of the details can be found in \cite{Vie-Zuo03a}; for a detailed discussion please see the Appendix.

\subsection{Main construction on $\mathbb C$.}
In the set-up of \S\ref{subsect:summary}, our aim here is to use the  constructions in the previous sections in order to produce
interesting Hodge-theoretic sheaves on $\CC$, assuming the existence of a holomorphic mapping $\gamma \colon \CC \to V$. 

\noindent
{\bf Assumption}: all VHS appearing in this paper are assumed to be polarizable, and all local monodromies to be quasi-unipotent; see for instance \cite{Sch73} for the definitions. This is of course the case for any geometric VHS, i.e. the Gauss-Manin connection of a smooth family of projective manifolds, thanks to the monodromy theorem (see for instance \cite[Lem.~4.5]{Sch73}). In general, fixing a polarization induces the Hodge metric on the associated Higgs bundle, its singularities at the boundary will play a crucial role in \S\ref{subsect:AS}. 

\medskip

We start with the key output of the Hodge theoretic constructions above, following arguments in \cite{PS15}. 
According to the strategy in \cite{Vie-Zuo03a}, it will later be combined with analytic arguments in order to conclude the non-existence of dense entire curves.

\begin{proposition}\label{prop:summary}
Let $f_U\colon U\to V$ be a smooth family of projective varieties, with maximal variation, 
and whose geometric generic fiber has a  good minimal model. 
Then, after possibly replacing $V$ by a birational model,
there exists a smooth projective compactification $Y$ of $V$, with $D = Y \smallsetminus  V$ a 
simple normal crossing divisor, together with a big and nef line bundle $\sL$ and  an inclusion of graded $\sA_Y^\bullet(-\log D)$-modules
$$ 
(\sF _{\bullet}, \theta_{\bullet}) \subseteq (\sE_{\bullet}, \theta_{\bullet}),
$$
on $Y$, that verify the following properties:

\begin{enumerate}
\item $(\sE_\bullet, \theta_\bullet)$ is the Higgs bundle underlying the Deligne extension with eigenvalues in  $[0, 1)$
of a VHS defined outside of a simple normal crossing divisor $D + S$. 
\item \label{item:ample} $\sF_0$ is a line bundle, and we have an inclusion $\sL\subseteq \sF_0$ which is an isomorphism 
on $V$. 
\item \label{item:2} If $\gamma\colon \bbC\to  V \subseteq Y$ is a holomorphic map, then for each $k\ge 0$ there exists a morphism
$$
\tau_{(\gamma, k)} : \sT_{\mathbb C}^{\otimes k}  \to \gamma^*\big(\bigotimes^{k}  \sT_Y  (- {\rm log} D)\big)
   \to  \gamma^*( {\sF_0}^{-1}  \otimes \sE_{k})  \rightarrow \gamma^*(\sL^{-1} \otimes \sE_{k}).
$$

\end{enumerate}
\end{proposition}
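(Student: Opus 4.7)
The plan is to combine the Hodge-module constructions of \S\ref{subsect:Hodge} with the existence of sections provided in the Appendix, following the strategy developed in \cite{Vie-Zuo03a} and \cite{PS15}. First I would invoke Propositions~\ref{thm:ggrefine} and \ref{section_SNC} to replace $V$ by a birational model and produce a smooth projective compactification $f \colon X \to Y$ of (a modification of) the family, with $D = Y \smallsetminus V$ simple normal crossing, together with a big line bundle $\sA$ on $Y$ (arising from the moduli map and encoding maximal variation) and a section $0 \neq s \in H^0(X, \sB^m)$ for some $m > 0$, where $\sB = \omega_{X/Y} \otimes f^*\sA^{-1}$. The section $s$ puts us in the setting of \S\ref{subsect:Hodge}, and Theorem~\ref{thm:G-sheaf} then yields a graded coherent $\shA_Y$-module $\GF \subseteq \grf\cM$ with $\shG_0 \simeq \sA$, where $(\cM, F_\bullet)$ is the filtered $\sD_Y$-module underlying the pure Hodge module summand of $\cH^0 h_*\bQ_Z^H[n+d](d)$ strictly supported on $Y$.

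Next I would produce $(\sE_\bullet, \theta_\bullet)$ from the VHS underlying $\cM$. Let $S \subseteq Y$ be the smallest closed subset such that $\cM$ restricts to a polarizable VHS on $Y \smallsetminus S$; after a further log resolution that is an isomorphism over $V$, we may assume $D + S$ has simple normal crossings. Since the VHS is of geometric origin, its local monodromies are quasi-unipotent, so the Deligne canonical extension with eigenvalues in $[0, 1)$ yields a logarithmic Higgs bundle $(\sE_\bullet, \theta_\bullet)$ on $Y$ with log poles along $D + S$, matching $\grf\cM$ over $Y \smallsetminus S$. I would then define $(\sF_\bullet, \theta_\bullet)$ as the saturated image of the composition $\GF \subseteq \grf\cM \to \sE_\bullet$, where the second arrow extends the identification over $Y \smallsetminus S$. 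The Higgs structure on $\sF_\bullet$ has log poles only along $D$ (not along $S$) because $\GF$ is a $\shA_Y^\bullet(-\log D)$-module by construction, and $\sA_Y^\bullet(-\log D)$ acts through its natural map into $\sA_Y^\bullet(-\log(D+S))$. For property (\ref{item:ample}), the identification $\sF_0 \simeq \shG_0 \simeq \sA$ over $V$ gives a line bundle $\sF_0$, and after twisting $\sA$ by a suitable effective divisor supported away from $V$ (or applying the refinement argument in Proposition~\ref{thm:ggrefine}) we obtain a big and nef $\sL$ with an inclusion $\sL \subseteq \sF_0$ that is an isomorphism on $V$.

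For property (\ref{item:2}), since $\gamma(\CC) \subseteq V$, the differential extends to $d\gamma \colon \sT_{\CC} \to \gamma^*\sT_Y(-\log D)$. The Higgs field $\theta_\bullet \colon \sF_\bullet \to \Omega^1_Y(\log D) \otimes \sF_{\bullet+1}$, viewed dually, gives $\sT_Y(-\log D) \otimes \sF_\bullet \to \sF_{\bullet+1}$, so iterating $k$ times starting from the inclusion $\sL \subseteq \sF_0$ and tensoring with $\sF_0^{-1}$ produces
\[
\bigotimes^k \sT_Y(-\log D) \longrightarrow \sF_0^{-1} \otimes \sF_k \hookrightarrow \sF_0^{-1} \otimes \sE_k \longrightarrow \sL^{-1} \otimes \sE_k,
\]
where the last arrow uses $\sL \subseteq \sF_0$. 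Pulling back along $\gamma$ and precomposing with $d\gamma^{\otimes k}$ yields $\tau_{(\gamma,k)}$.

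The main obstacle is the second paragraph: ensuring that the image of $\GF$ inside $\sE_\bullet$ is a genuine sub-Higgs sheaf with logarithmic poles only along $D$ (and not along $S$), and that $\sF_0$ is a line bundle for which $\sL$ can be arranged to be both big and nef with equality to $\sF_0$ on $V$. This compatibility, which is what allows the argument to extend beyond the canonically polarized case of \cite{Vie-Zuo03a} to the good-minimal-model setting via the filtered direct image machinery of \cite{PS15}, is the key technical point on which everything else depends.
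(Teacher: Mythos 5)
Your skeleton is the paper's: get the section from the Appendix, run the construction of \S\ref{subsect:Hodge} and Theorem~\ref{thm:G-sheaf} to obtain $\GF\subseteq\grf\cM$, let $\sE_\bullet$ be the Deligne extension with eigenvalues in $[0,1)$ of the VHS on the complement of $D+S$, carve $\sF_\bullet$ out of $\sG_\bullet$ inside $\sE_\bullet$, and build $\tau_{(\gamma,k)}$ by iterating the logarithmic Higgs field. The genuine gap is in your handling of $S$. The singular locus $S$ of the Hodge module $M$ is essentially the discriminant of the auxiliary family $h\colon Z\to Y$ coming from the cyclic cover along $\mathrm{div}(s)$, and it meets $V$; it is not contained in the boundary. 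So a log resolution making $D+S$ simple normal crossings cannot be ``an isomorphism over $V$,'' contrary to what your second paragraph assumes. One must modify $V$ itself (this is why the statement allows replacing $V$ by a birational model), and after the modification $\mu\colon\tilde Y\to Y$ the section $s$ --- and with it the entire output of Theorem~\ref{thm:G-sheaf} --- has to be reconstructed on the new model. That is exactly what Proposition~\ref{section_SNC} supplies, and it only yields a section over an open set $\tilde Y_0$ whose complement has codimension at least $2$. Accordingly, the paper defines $\sF_\bullet=(\sG_\bullet\cap\sE_\bullet)^{\vee\vee}$ and checks $\sA(-D)\subseteq\sF_0\subseteq\sA$ together with $\theta(\sF_k)\subseteq\Omega^1_Y(\log D)\otimes\sF_{k+1}$ only on $Y_0$, and then extends to all of $Y$ by uniqueness of reflexive extensions across a codimension-two set, using that the ambient sheaves are locally free. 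None of this codimension-two bookkeeping appears in your argument, and it cannot be replaced by ``twisting $\sA$ by an effective divisor supported away from $V$'': the actual mechanism is that $\sA=\sL(D_Y)$ with $\sL$ ample and $D_Y\ge D$ from Proposition~\ref{thm:ggrefine}, and after pulling back under $\mu$ the bundle $\sL$ is only big and nef --- which is precisely why the statement is phrased that way.

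Two smaller points. First, Proposition~\ref{section_SNC} cannot be invoked at the outset as you do: one needs $M$ (hence its singular locus, which is required to contain the branch divisor $\Delta_\tau$) before choosing $S$ and resolving $(Y,D+S)$; the order is: construct $s$ and $M$ via Proposition~\ref{thm:ggrefine} and Theorem~\ref{thm:G-sheaf}, read off $S$, then resolve and transfer the section. Moreover, since $s$ and the transferred section agree only away from $S$, one needs the remark that the new pure Hodge module is the unique strict-support extension of the same VHS, so that $\sE_\bullet$ is well defined. Second, the assertion that $\theta$ preserves $\sF_\bullet$ with poles only along $D$ is not formal from ``$\GF$ is an $\sA_Y(-\log D)$-module'' (the Higgs field of $\sE_\bullet$ a priori has poles along $D+S$, and its image must land back in $\sF_{\bullet+1}$); this is the content of \cite[Prop.~2.14 and 2.15]{PS15}, to which the paper appeals on $Y_0$ before the reflexive extension step.
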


\begin{proof}
We consider $f \colon X \to Y$ as in Proposition~\ref{thm:ggrefine} in the Appendix.\footnote{Unlike in the Appendix, here we denote the 
original family $U \to V$, and we keep this notation after passing to a birational model, since there is no danger of confusion.} Thus  there exist an integer $m > 0$ and a line bundle $\sA$ on $Y$, of the form $\sA= \sL(D_Y)$ with $\sL$ ample and $D_Y \ge D$, such that 
$$H^0\big(X, (\omega_{X/Y} \otimes {f}^*\sA^{-1})^m\big)\neq 0.$$
This means that we can apply the constructions in \S\ref{subsect:Hodge}; we set
$$\sB =\omega_{X/Y } \otimes {f}^*\sA^{-1}$$ 
and pick $0 \neq s \in H^0(X, \sB^m)$. 
Associated to this section, by applying Theorem \ref{thm:G-sheaf}, we obtain a Hodge sheaf $\GF$ and a Hodge module $M$ on $Y$. 

For the next construction, we would like to assume that there is an effective divisor $S$ on $Y$ such that 
the singular locus of $M$ is (contained in) $D + S$, and that $D+ S$ has simple normal crossings. In fact, and this is sufficient, we can only accomplish this outside of 
a closed subset of codimension at least $2$, as follows. We consider a further birational model $\tilde f\colon \tilde X \to \tilde Y$ 
as in Proposition \ref{section_SNC}, imposing that the singular locus $S$ of $M$ contain the branch locus $\Delta_\tau$ in that statement. 
Using the notation $\mu \colon \tilde Y \to Y$ for the birational map on the base, we obtain that there exists a closed subset $T$ in $\tilde Y$, of codimension at least $2$, such that  $s$ induces a new section 
\[\tilde s\in H^0\big(\tilde X_0, (\omega_{\tilde X/\tilde Y}\otimes {\tilde f}^*\mu^*\sA^{-1})^{m}\big),\] 
with $\tilde Y_0=\tilde Y\smallsetminus T$ and $\tilde X_0 = \tilde f^{-1} (\tilde Y_0)$, such that 
$\tilde s$ and $s$ coincide on the fibers over points away from $S$, where $\mu$ is the identity map.

We again record the conclusion of Theorem  \ref{thm:G-sheaf} for the new family, over $\tilde Y_0$ only. Note that since the sections coincide away from $S$, the new pure Hodge module is the unique extension with strict support of the same VHS as $M$, defined on the complement of $D+ S$. We can therefore revert to the original notation, and assume that we have a Hodge module $M$ and 
a Hodge subsheaf $\GF$ on $Y$, such that on an open subset $Y_0$ with complement of codimension at least $2$ they coincide with those constructed
as above from the section $s$, and in addition the divisor $D + S$ (and in particular the singular locus of $M$) has simple normal crossings.
Note that because of the birational modification, $\sL$ is now only a big and nef line bundle.

We now take $(\sE_\bullet, \theta_\bullet)$ to be the Higgs bundle on $Y$ underlying the Deligne extension with eigenvalues in $[0, 1)$
of the VHS that coincides with $M$ outside of $D + S$. 
Following \cite[\S2.7 and \S2.8]{PS15}, on the open set $Y_0$ we define a subsheaf $(\sF_\bullet, \theta_\bullet)$ of $(\sE_\bullet,  \theta_\bullet)$ by
$$
\sF_{\bullet} = \bigl( \sG_{\bullet} \cap \sE_{\bullet}   \bigr)^{\vee\vee}.
$$
Note that the intersection makes sense, since both $\sG_{\bullet}$ and $\sE_\bullet$ are contained in  $\gr^F_\bullet \cM$.
Precisely as in~\cite[Prop.~2.14 and~Prop.~2.15]{PS15}, on $Y_0$ one has the 
following properties for $\sF_{\bullet}$: 

\begin{enumerate}
\item \label{aaa} We have $\sA(-D)\subseteq \sF_0\subseteq \sA$, for some integer $l>0$.
\item \label{bbb} The Higgs field $\theta$ maps $\sF_k$ into $\Omega^1_Y(\log D)\otimes \sF_{k+1}$.
\end{enumerate}

Now since the complement of $Y_0$ has codimension at least $2$, the sheaves $\shF_k$ have a unique reflexive extension to the entire $Y$. As all the other sheaves appearing in them are locally free, the maps in \ref{aaa}
and \ref{bbb} extend uniquely as well, and hence both properties continue to hold on $Y$. This realizes the 
global construction.

Note that $\sF_0$ is a reflexive sheaf of rank $1$ on the smooth variety $Y$, and hence is a line bundle.
Thus~\ref{aaa} shows Item~\ref{item:ample}, while~\ref{bbb} leads to Item~\ref{item:2} by the
 following construction. 
Note that~\ref{bbb} means $\sF_\bullet$ is an $\sA_Y(- \log D)$-module. The $\sA_Y(- \log D)$-module structure induces a map 
\[\rho_k:\bigotimes^{k}  \sT_Y(-\log D)\to \Sym^k\sT_Y(-\log D)\to {\sF_0}^{-1}\otimes \sF_k \to {\sF_0}^{-1} \otimes \sE_k.\]
By composing $\rho_k$ with the $k$-th tensor power of the differential 
\[d\gamma: \sT_\bbC\to \gamma^* \sT_Y (- {\rm log}D),\] 
we obtain 
$$
\tau_{(\gamma, k)}:\sT_{\mathbb C}^{\otimes k}  \xrightarrow{d\gamma^{\otimes k}} \gamma^*\big(\bigotimes^{k}  \sT_Y (- {\rm log}D) \big)\xrightarrow{\gamma^*\rho_k}\gamma^*( {\sF_0}^{-1}  \otimes \sE_{k})  \hookrightarrow \gamma^*(\sL^{-1} \otimes \sE_{k}),
$$
where the last morphism is induced by the inclusion of $\sL$ into $\sF_0$.
\end{proof}

\begin{remark}
If  $f_U\colon U\to V$ has fibers with semiample canonical bundle, then by Proposition \ref{thm:ggrefine} we may also assume that $\sB^m$ is globally generated over $f^{-1} (V)$ in the result above. This will be used in the next section.
\end{remark}

Finally, we record a fact that will be of use later on. 

\begin{lemma}\label{nonzero}
In the notation of Proposition \ref{prop:summary}, the Higgs map
$$\theta_0 \colon \sF_0 \longrightarrow \sF_1 \otimes \Omega^1_Y (\log D)$$
is injective.
\end{lemma}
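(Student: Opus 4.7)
The injectivity of $\theta_0$ is equivalent to its non-vanishing: the source $\sF_0$ is a line bundle on the smooth variety $Y$, the target $\sF_1\otimes\Omega^1_Y(\log D)$ is locally free, and any kernel of $\theta_0$ is a torsion-free subsheaf of the line bundle $\sF_0$; if $\theta_0\not\equiv 0$, then its image has rank one, forcing the kernel to have rank zero, and so to vanish. It therefore suffices to prove $\theta_0\not\equiv 0$.

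I plan to argue by contradiction: assume $\theta_0=0$. Then the composition $\sF_0\hookrightarrow\sE_0\xrightarrow{\theta_0}\sE_1\otimes\Omega^1_Y(\log D)$ vanishes, so $\sF_0$, placed in graded degree $0$, is a sub-Higgs line bundle of the logarithmic Higgs bundle $(\sE_\bullet,\theta_\bullet)$ with trivial induced Higgs field. By a Mehta--Ramanathan/Bertini argument I choose a smooth complete curve $C\subset Y$ meeting $D+S$ transversally and lying in a class with $\sL\cdot C>0$; such $C$ exist because $\sL$ is big and nef on $Y$. The restriction of $(\sE_\bullet,\theta_\bullet)$ to $C$ is the logarithmic Higgs bundle on $C$ attached to the Deligne canonical extension (with eigenvalues in $[0,1)$) of the pulled-back polarizable VHS on $C\smallsetminus(D+S)$, and $\sF_0|_C$ is a Higgs sub-line-bundle of it with zero Higgs field.

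The key input is the standard negativity statement: a sub-Higgs sheaf with vanishing Higgs field inside the logarithmic Higgs bundle of a polarizable VHS, with the Deligne extension taken with eigenvalues in $[0,1)$, has non-positive degree on a compact Riemann surface; this is a consequence of Simpson's polystability theorem and its logarithmic extensions (Mochizuki, Sabbah), together with Schmid's asymptotic analysis \cite{Sch73}. Hence $\deg_C(\sF_0|_C)\leq 0$. On the other hand, the inclusion $\sL\hookrightarrow\sF_0$ from Proposition~\ref{prop:summary} forces $\deg_C(\sF_0|_C)\geq \sL\cdot C>0$, a contradiction, so $\theta_0\not\equiv 0$ as required. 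The delicate point in this plan is ensuring that the negativity inequality goes through in the logarithmic setting without unwanted boundary contributions; this is handled by the particular choice of Deligne extension with eigenvalues in $[0,1)$ built into Proposition~\ref{prop:summary}, together with the transverse intersection of $C$ with $D+S$.
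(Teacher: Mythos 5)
Your argument is correct, but it takes a different route from the paper. The paper also reduces to showing $\theta_0\neq 0$, but then argues globally on $Y$: if $\theta_0$ vanished, then $\sF_0$ would sit inside $K_0=\ker\bigl(\theta_0\colon\sE_0\to\sE_1\otimes\Omega^1_Y(\log D+S)\bigr)$, and by \cite[Theorem~4.8]{PW16} (packaging the results of \cite{Zuo00} and \cite{Bru15}) the sheaf $K_0^\vee$ is weakly positive, so $\sF_0^{-1}$ would be pseudoeffective, contradicting the bigness of $\sF_0$ coming from $\sL\subseteq\sF_0$. You instead restrict to a general complete-intersection curve $C$ transversal to $D+S$ and invoke the curve-level negativity (parabolic polystability of degree zero for the tame harmonic/Higgs bundle of the VHS, with the $[0,1)$ normalization guaranteeing nonnegative parabolic weights), getting $\deg_C(\sF_0|_C)\le 0$ against $\deg_C(\sF_0|_C)\ge\sL\cdot C>0$. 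Both proofs rest on the same underlying negativity of the kernel of the Higgs field of a polarizable VHS; the paper's version is shorter because it quotes a ready-made global weak-positivity statement, while yours needs the (standard, but not entirely free) compatibility of the Deligne $[0,1)$-extension with restriction to a transversal curve and the parabolic-degree bound for $\theta$-invariant subsheaves, which is exactly the content the cited references establish. Two small points: the target $\sF_1\otimes\Omega^1_Y(\log D)$ is only torsion-free, not locally free in general ($\sF_1$ is just reflexive), but torsion-freeness is all your reduction uses; and you should note that vanishing of the map into $\sF_1\otimes\Omega^1_Y(\log D)$ is the same as vanishing of the restriction of the ambient Higgs field into $\sE_1\otimes\Omega^1_Y(\log D+S)$, since the former target injects into the latter --- this is what places $\sF_0$ in the kernel you restrict to $C$.
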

\begin{proof}
It suffices to show that $\theta_0$ is not the zero map, since $\sF_0$ is a line bundle and $\sF_1$ is torsion free.
By Item \ref{item:ample}, we know that $\sF_0$ is a big line bundle. On the other hand, if $\theta_0$ were identically zero, then we would have that $\sF_0 \subseteq K_0$, where 
$$K_0 := {\rm ker} \big(\theta_0 \colon \sE_0 \to \sE_1 \otimes \Omega^1_Y (\log D +S)\big).$$
Now $K_0^\vee$ is a weakly positive sheaf by \cite[Theorem~4.8]{PW16} (an easy consequence of the results of \cite{Zuo00} and 
\cite{Bru15} in the unipotent case), so this would imply that $\sF_0^{-1}$ is also a pseudoeffective line bundle, a contradiction.
\end{proof}

\subsection{Further refinements for families of minimal manifolds of general type}
\label{subsect:KS}
In the current section, assuming that the members of the family 
are minimal and of general type, we will establish a connection between 
the sheaf $(\sF, \theta)$ defined in Proposition~\ref{prop:summary} 
and the Kodaira-Spencer map of $f$. In the canonically polarized case treated in \cite{Vie-Zuo03a}, 
an analogous statement is proved as an application of the Akizuki-Nakano vanishing theorem, which 
in the present context is not available any more; we will be able to achieve this using a different 
argument based on transversality and a more restrictive vanishing theorem due to Bogomolov and Sommese.

We continue to be in the set-up of \S\ref{subsect:summary},  and we fix the morphism $f \colon X \to Y$ as in the proof of
Proposition~\ref{prop:summary}.
We define a new graded $\shA_Y$-module $\widetilde \sF_\bullet$ by 
\begin{equation}\label{eq:FsheafKS}
\widetilde{\FF}= R^0 f_*\big( \omega_{X/Y} \otimes_{\sO_X} \sB^{-1}\otimes \gr^F_\bullet \sO_X \overset{\bf L}{\otimes}_{\shA_X}f^*{\shA_Y}\big),
\end{equation}
i.e. the left hand side of ($\ref{eq:theMor}$), where the $\shA_Y$-module structure is induced by the $f^*{\shA_Y}$-module structure on $\omega_{X/Y} \otimes_{\sO_X} \sB^{-1}\otimes \gr^F_\bullet \sO_X \overset{\bf L}{\otimes}_{\shA_X}f^*{\shA_Y}$.
This structure induces a morphism 
\begin{equation}\label{KS}
\shTA_Y\rarr {\widetilde\shF_0}^{-1} \otimes \widetilde\shF_1.
\end{equation}
Also, by the projection formula, we have $\widetilde\shF_0=\sA$.

On the other hand, over the locus where $f$ is smooth, using the fact that the natural morphism 
\begin{equation}\label{eq:QuasiComp}
[\shA_X^{\bullet-d}\otimes \bigwedge^{d}\sT_{X/Y}\rarr\shA_X^{\bullet-d+1}\otimes \bigwedge^{d-1}\sT_{X/Y}\rarr\cdots\rarr \shA_X^\bullet]\rarr f^*\shA_Y^\bullet
\end{equation}
induced by the natural mapping $\sT_X \to f^* \sT_Y$ is a quasi-isomorphism of complexes of graded $\shA_X$-modules (see for example~\cite[Lem.~14.3.5]{Pha79}\footnote{In \emph{loc. cit.}  it is stated for $\Dmod_X$ and $\Dmod_Y$ respectively, as opposed to their associated graded objects.}), we know that
\[\widetilde\shF_\bullet\simeq R^0 f_*\big(f^*\sA \otimes \grf\sO_X\otimes_{\shA_X}{\rm DR}_{X/Y}{(\shA_X^\bullet)}\big).\]
In particular, over this locus we have
$$\widetilde{\sF}_1 \simeq\sA\otimes R^1 f_*\sT_{X/Y}.$$
Therefore, by construction we obtain:

\begin{lemma}\label{lm:compKS}
Over $V = Y \smallsetminus D$, the morphism \eqref{KS} is precisely the Kodaira--Spencer map 
\[\shTA_Y\rarr R^1 f_*\sT_{X/Y}.\]
\end{lemma}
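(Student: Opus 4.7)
The plan is to compute the map \eqref{KS} explicitly over $V$ by unpacking the derived tensor product computation that already gives $\widetilde\sF_0 \simeq \sA$ and $\widetilde\sF_1 \simeq \sA \otimes R^1 f_* \sT_{X/Y}$ in the paragraph preceding the lemma, and then to identify the induced action of $\sT_Y$ with the connecting morphism of the short exact sequence of tangent sheaves, which is by definition the Kodaira-Spencer map.

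First I would restrict attention to the smooth locus $V$, where $\omega_{X/Y} \otimes \sB^{-1}$ is simply $f^*\sA$. The key tool is the quasi-isomorphism \eqref{eq:QuasiComp}, which represents $f^*\shA_Y$ by a Koszul-type resolution built on the short exact sequence
$$0 \longrightarrow \sT_{X/Y} \longrightarrow \sT_X \longrightarrow f^*\sT_Y \longrightarrow 0.$$
In internal degree $0$ the resolution is trivial, giving $\widetilde\sF_0 = R^0 f_*(f^*\sA) = \sA$ by the projection formula. In internal degree $1$ it reduces to the two-term complex $[\sT_{X/Y} \to \sT_X]$, and after tensoring with $\gr^F \sO_X$ over $\shA_X$ (which kills $\sT_X$-multiplication) and with $f^*\sA$, and then applying $R^0 f_*$, the surviving contribution is $\sA \otimes R^1 f_* \sT_{X/Y}$, exactly as already recorded.

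Next I would trace the multiplication action of a local section $v$ of $\sT_Y \subset \shA_Y^1$ through this resolution. Concretely, $v$ lifts locally to a section $\tilde v$ of $\sT_X$ along $f$; any two lifts differ by an element of $\sT_{X/Y}$, and the Čech assembly of these differences yields a class in $R^1 f_* \sT_{X/Y}$. This is precisely the construction of the connecting morphism in the long exact sequence obtained by applying $R^\bullet f_*$ to the displayed short exact sequence, which is the standard definition of the Kodaira-Spencer map of $f$. Since \eqref{KS} is by construction induced from the $\shA_Y$-action on $\widetilde\FF$, the identification follows. The only real obstacle is bookkeeping: keeping track of the various grading conventions in \eqref{eq:QuasiComp} and verifying that the $\shA_Y$-module structure carried by the Koszul resolution genuinely matches, in the derived category, the classical connecting map. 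Once the identifications $\widetilde\sF_0 \simeq \sA$ and $\widetilde\sF_1 \simeq \sA \otimes R^1 f_* \sT_{X/Y}$ are accepted as input, the compatibility of the two maps is essentially tautological, because the Koszul model for $f^*\shA_Y$ over $\shA_X$ is built from the very same sequence of tangent sheaves that defines Kodaira-Spencer.
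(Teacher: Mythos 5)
Your proposal is correct and follows essentially the same route as the paper: the paper also obtains the lemma from the quasi-isomorphism \eqref{eq:QuasiComp} (induced by $\sT_X \to f^*\sT_Y$) together with the identifications $\widetilde\sF_0 \simeq \sA$ and $\widetilde\sF_1 \simeq \sA \otimes R^1 f_*\sT_{X/Y}$ over the smooth locus, concluding ``by construction.'' Your explicit Čech tracing of the $\sT_Y$-action through the Koszul model, identifying it with the connecting map of $0 \to \sT_{X/Y} \to \sT_X \to f^*\sT_Y \to 0$, simply spells out the step the paper leaves implicit.
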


Consequently, in order to establish a connection between the 
Kodaira-Spencer map and  $(\sF_\bullet, \theta_\bullet)$ in Proposition~\ref{prop:summary}, 
 it suffices to establish one between $(\wtilde \sF_\bullet, \wtilde \theta_\bullet)$ 
and $(\sF_\bullet, \theta_\bullet)$. This follows immediately from the next result.

\begin{proposition}\label{prop:GenericIso}
For $k\le 1$, the natural morphism 
\[\widetilde\shF_k\rarr\shG_k\]
is generically an isomorphism.
\end{proposition}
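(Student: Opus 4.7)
The plan is to exploit the fact that $\wtilde\shF_\bullet\to\shG_\bullet$ is surjective by construction of $\shG_\bullet$ as an image, so the task reduces to showing generic injectivity of the composed map $\wtilde\shF_k\to\gr^F_k\cM$ for $k\le 1$.

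For $k=0$, the projection formula together with the identification $\omega_{X/Y}\otimes\sB^{-1}=f^*\sA$ gives $\wtilde\shF_0 = \sA$, while the first item of Theorem~\ref{thm:G-sheaf} gives $\shG_0 = \sA$. Any surjection between line bundles is an isomorphism, so this settles the $k=0$ case.

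For $k=1$, I would restrict to a dense open $V_0\subseteq Y$ over which $f$ is smooth, the divisor $\mathrm{div}(s)$ meets each fiber in a simple normal crossings divisor, and $V_0$ is disjoint from the singular locus $S$ of $\cM$. Over $V_0$, the Koszul-type quasi-isomorphism \eqref{eq:QuasiComp} (equivalently, the computation behind Lemma~\ref{lm:compKS}) yields
\[
\wtilde\shF_1\big|_{V_0} \;\simeq\; \sA \otimes R^1 f_*\shTA_{X/Y}.
\]
On the other hand, since $\cM$ restricts on $V_0$ to the Deligne extension of the character-$1$ summand of the VHS coming from $(\sB^{-1}_*,F_\bullet)$, Saito's strict direct-image formula (in the spirit of Proposition~\ref{prop:relcomp}) combined with the Esnault--Viehweg description of the Hodge filtration on $H^d$ of the cyclic cover $Z \to X$ identifies $\gr^F_1\cM|_{V_0}$ with $R^1 f_*(\Omega^{d-1}_{X/Y}\otimes\sB^{-1})$, which via the canonical isomorphism $\Omega^{d-1}_{X/Y}\otimes\omega_{X/Y}^{-1}\simeq\shTA_{X/Y}$ is again $\sA\otimes R^1f_*\shTA_{X/Y}$. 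Tracing the map induced by the canonical inclusion $\sB^{-1}\hookrightarrow F_0\sB^{-1}_*$ through these identifications shows that it becomes the identity on $\sA\otimes R^1f_*\shTA_{X/Y}$, which yields generic injectivity.

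The main obstacle is the precise comparison of $\gr^F_1\cM|_{V_0}$ with $R^1f_*(\Omega^{d-1}_{X/Y}\otimes\sB^{-1})$. This requires carefully accounting for the Tate twist in $\cM = M(d)$, the convention of the Deligne extension with eigenvalues in $[0,1)$, and the character decomposition of $\pi_*\sO_Z$; once this identification is in place, the verification that the natural map is the identity follows cleanly from the fact that both the source and target are ultimately built from the inclusion $\sB^{-1}\hookrightarrow F_0\sB^{-1}_*$.
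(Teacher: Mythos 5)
Your reduction to generic injectivity is fine (indeed $\shG_\bullet$ is by definition the image of $\widetilde\shF_\bullet$ in $\gr^F_\bullet\cM$), and the $k=0$ case is handled the same way as in the paper. The gap is in the $k=1$ step, at exactly the point you flag as a bookkeeping ``obstacle'': the identification of $\gr^F_1\cM$ over a generic open set $V_0$ with $R^1f_*\bigl(\sB^{-1}\otimes\Omega^{d-1}_{X/Y}\bigr)$ is not correct. The branch divisor $H=\mathrm{div}(s)$ is \emph{horizontal}: it is transversal to, and meets, every fiber over $V_0$, so it cannot be absorbed into the singular locus $S\subset Y$ of $\cM$ by shrinking $V_0$. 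By the Esnault--Viehweg decomposition of the cyclic cover, the character-$1$ summand of $H^d(Z_y)$ has Hodge pieces built from $\Omega^{p}_{X_y}(\log H_y)\otimes\sB^{-1}$, not $\Omega^{p}_{X_y}\otimes\sB^{-1}$; equivalently, via Proposition~\ref{prop:relcomp} one gets
\[
\gr^F_1\cM\big|_{V_0}\;\simeq\;R^1f_*\bigl(\sB^{-1}\otimes\Omega^{d-1}_{X/Y}(\log H)\bigr),
\]
with logarithmic poles along $H$. Consequently the map $\widetilde\shF_1\to\gr^F_1\cM$ is induced by the inclusion $\Omega^{d-1}_{X/Y}\hookrightarrow\Omega^{d-1}_{X/Y}(\log H)$ and is emphatically not ``the identity on $\sA\otimes R^1f_*\shTA_{X/Y}$''; its generic injectivity is the substantive assertion to be proved, not a consequence of tracing conventions and Tate twists.

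This is where the hypotheses of the section enter, and your proposal never uses them, which is a warning sign: the authors state explicitly that this comparison is the one place requiring the fibers to be minimal of general type. The actual argument is: first choose $s$ general, which is possible because $\omega_F$ is semiample (base-point freeness for minimal fibers of general type) so that $\sB^m$ is globally generated over $f^{-1}(V)$ and $H$ can be taken smooth and transversal to the fibers (your assumption that $\mathrm{div}(s)$ meets the fibers in an SNC divisor is likewise not automatic for an arbitrary $s$ and needs this). Then the residue exact sequence
\[
0\to\Omega^{d-1}_{X/Y}\to\Omega^{d-1}_{X/Y}(\log H)\to\Omega^{d-2}_{H/Y}\to 0
\]
shows that the kernel of $R^1f_*\bigl(\sB^{-1}\otimes\Omega^{d-1}_{X/Y}\bigr)\to R^1f_*\bigl(\sB^{-1}\otimes\Omega^{d-1}_{X/Y}(\log H)\bigr)$ is a quotient of $(f|_H)_*\bigl(\sB^{-1}\otimes\Omega^{d-2}_{H/Y}\bigr)$, and one must prove this vanishes. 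The paper does so by noting that $\sB|_{F_H}$ is big on the general fiber of $f|_H$ (since $\sB|_F\simeq\omega_F$ is big and nef) and invoking Bogomolov--Sommese vanishing; this replaces the Akizuki--Nakano vanishing used by Viehweg--Zuo in the canonically polarized case. Without this vanishing step your proof does not go through, and a proof that avoided it entirely would settle the general polarized case, which the authors say they cannot do.
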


\begin{proof}
For $k=0$, the statement follows from by Theorem \ref{thm:G-sheaf}, Item \ref{G0}. 
We now focus on the $k=1$ case. By the basepoint-free theorem, the fibers have semiample 
canonical bundle, hence the second part of Proposition~\ref{thm:ggrefine} applies and so $\sB^m$ is 
generated by global sections over $f^{-1}(V)$. 
Replacing $Y$ by $V$, after shrinking it further if necessary, and $X$ by $\mu^{-1}(V)$,  
we can assume that $f|_H \colon H\rarr Y$, $f$ and $h$ are smooth and $\sB^m\simeq\sO_X(H)$ is globally generated, where $H$ is a smooth divisor transversal to the fibers.
Here $h$ is the morphism defined in \S\ref{subsect:Hodge} by the resolution of the branched covering associated to the global section defining $H$.
Since $h$ is smooth, we have $\cH^0h_*{\bQ_Z^H}[n+d]=M(-d)$ and so it is enough to show that
the morphism 
\begin{equation}\label{FGmor}
\widetilde\shF_1\rarr R^0 f_*\big(\omega_{X/Y}\otimes\grf \sB^{-1}_* \overset{\bf L}{\otimes}_{\shA_X}f^*{\shA_Y}\big)_1
\end{equation}
defined in \S\ref{subsect:Hodge} is injective.

On the other hand, as $f$ is smooth, as we have seen above we have 
\[\widetilde\shF_\bullet\simeq R^0 f_*\big(\sB^{-1}\otimes \grf\sO_X\otimes_{\shA_X}{\rm DR}_{X/Y}{(\shA_X^\bullet)}\big)\]
In particular, since $\sB^{-1}=\sB^{(-1)}$ we have
\[\widetilde\shF_1\simeq R^0 f_*\big(\sB^{-1}\otimes \grf\sO_X\otimes_{\shA_X}{\rm DR}_{X/Y}{(\shA_X^\bullet)}\big)_1\simeq R^1 f_*\big(\sB^{-1}\otimes \Omega^{d-1}_{X/Y}\big).\]
Moreover, since $H$ is smooth (so that  $\sB^{-1}_*$ is the same as $\sB^{-1} (*D)$) and transversal to the fibers, 
according to~\eqref{eq:QuasiComp} and Proposition~\ref{prop:relcomp}, we also have
\[R^0 f_*\big(\omega_{X/Y}\otimes\grf \sB^{-1}_* \overset{\bf L}{\otimes}_{\shA_X}f^*{\shA_Y}\big)_1\simeq R^1 f_*\big(\sB^{-1}\otimes \Omega^{d-1}_{X/Y}(\log H)\big).\]
It follows that the morphism in \eqref{FGmor} is induced by the first map of the following short exact sequence
\[0\rarr\Omega^{d-1}_{X/Y}\rarr\Omega^{d-1}_{X/Y}(\log H)\rarr \Omega^{d-2}_{H/Y}\rarr 0.\]

Notice that 
\[\sB|_F\simeq \omega_F\]
on each fiber $F$ of $f$. Since $\omega_F$ is big and nef, by calculating the top self-intersection number we see that $\sB|_{F_H}$ is big on the general fiber $F_H$ of $f|_H$ for general $H \in |\sB^m|$.
Then, according to the Bogomolov-Sommese vanishing theorem (see for instance~\cite[Cor.~6.9]{EV92}), 
we know that
$${f|_{H}}_*(\sB^{-1}\otimes\Omega^{d-2}_{H/Y})=0$$
generically, and hence everywhere since it is torsion-free. Therefore, we get the desired injectivity
for the morphism in ($\ref{FGmor}$), and this finishes the proof of the proposition.

\end{proof}

\begin{corollary}\label{cor:GTExtra}
In the situation of Proposition~\ref{prop:summary},
if we further assume that the fibers of $f_U$ are minimal and of general 
type, then the natural morphism induced by the $\sA_{Y}(-\log D)$-module structure
$$\sT_{Y}(-\log D) \longrightarrow {\sF_0}^{-1} \otimes \sF_1$$ 
coincides with the Kodaira-Spencer map of $f$ over a Zariski open subset of $V$. 
\end{corollary}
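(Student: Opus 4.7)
The plan is to factor the map in the statement through the auxiliary graded module $\widetilde{\sF}_\bullet$ from~\S\ref{subsect:KS} via the intermediate $\sG_\bullet$, and then invoke Lemma~\ref{lm:compKS}, which identifies the action map on $\widetilde{\sF}_\bullet$ with the Kodaira--Spencer map over $V$. The argument then breaks into three comparisons: (i) $\widetilde{\sF}_\bullet$ and $\sG_\bullet$ agree generically in degrees $\le 1$ by Proposition~\ref{prop:GenericIso}; (ii) $\sG_\bullet$ and $\sF_\bullet = (\sG_\bullet \cap \sE_\bullet)^{\vee\vee}$ agree generically on $V$ in degrees $\le 1$; and (iii) the various module action maps are compatible with the morphisms in (i) and (ii).

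For (i) and (iii), the morphism~(\ref{eq:theMor}) followed by the projection to $\gr^F_\bullet\cM$ defines a surjective morphism of graded $\shA_Y$-modules $\widetilde{\sF}_\bullet \twoheadrightarrow \sG_\bullet$ which, by Proposition~\ref{prop:GenericIso}, is an isomorphism on a Zariski open dense subset $V_1 \subseteq V$ in degrees $\le 1$. Being a morphism of graded $\shA_Y$-modules, it intertwines the two action maps $\sT_Y \to \widetilde{\sF}_0^{-1} \otimes \widetilde{\sF}_1$ and $\sT_Y \to \sG_0^{-1} \otimes \sG_1$ on $V_1$, and by Lemma~\ref{lm:compKS} the first one agrees with the Kodaira--Spencer map of $f$ on $V$. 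For (ii), I would note that off $D + S$ the Deligne extension $\sE_\bullet$ equals $\gr^F_\bullet \cM$, so $\sG_\bullet \cap \sE_\bullet = \sG_\bullet$ there. Shrinking $V_1$ to a Zariski open dense subset $V_0 \subseteq V_1 \setminus S$ over which $\sG_1$ is locally free (possible since $\sG_1$ is generically torsion-free by Theorem~\ref{thm:G-sheaf}), the double dual has no effect and $\sF_k|_{V_0} = \sG_k|_{V_0}$ for $k \le 1$. Since $\sT_Y(-\log D)|_{V_0} = \sT_Y|_{V_0}$ and the $\shA_Y(-\log D)$-action on $\sF_\bullet$ is inherited from the ambient $\shA_Y$-action on $\sG_\bullet$ via the inclusion $\shA_Y(-\log D) \hookrightarrow \shA_Y$, the induced map $\sT_Y(-\log D) \to \sF_0^{-1} \otimes \sF_1$ agrees on $V_0$ with the map $\sT_Y \to \sG_0^{-1} \otimes \sG_1$, and hence with the Kodaira--Spencer map.

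The principal obstacle is Proposition~\ref{prop:GenericIso}, which is where the minimal general type hypothesis is crucially used: the basepoint-free theorem guarantees that $\omega_F$ is semiample, which is what allows Proposition~\ref{thm:ggrefine} to supply the stronger global generation statement for $\sB^m$ over $V$. The Bogomolov--Sommese vanishing theorem then applies to a general smooth member $H$ of $|\sB^m|$ transverse to the fibers, using that $\sB|_{F_H} \simeq \omega_F|_{F_H}$ is big on a general fiber $F_H$ of $f|_H$.
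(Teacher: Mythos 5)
Your proposal is correct and takes essentially the same route as the paper's proof: it reduces the statement, via Proposition~\ref{prop:GenericIso}, to the generic agreement of $\sF_k$, $\sG_k$ and $\widetilde{\sF}_k$ for $k\le 1$, and then concludes with Lemma~\ref{lm:compKS}. The additional care you take in checking that the identifications intertwine the graded $\shA_Y$-module structures, and that the intersection with $\sE_\bullet$ and the double dual are generically harmless, simply makes explicit details the paper's two-line argument leaves implicit.
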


\begin{proof}
Thanks to Proposition~\ref{prop:GenericIso}, we know that  
the sheaves $\wtilde \sF_{k}$ and $\sG_{k}$ are generically isomorphic for $k=0,1$. On the other hand, $\sF_\bullet$ and $\sG_\bullet$ 
are generically the same by construction. Therefore, $\sF_k$ and $\wtilde \sF_{k}$ are generically isomorphic for $k=0,1$.
But  Lemma~\ref{lm:compKS} says that the morphism $\sT_Y \to \wtilde \sF_0^{-1} \otimes \wtilde \sF_1$ 
coincides with the Kodaira-Spencer map of $f$ over $V$, which proves the claim. 
\end{proof}

\section{Hyperbolicity properties of base spaces of families}
\label{sect:Hypo}

In this final part we establish the two main results of this paper, Theorem~\ref{thm:MaxVar} (and implicitly Theorem~\ref{thm:BHMain}) and 
Theorem~\ref{thm:MainGG}. Besides Proposition~\ref{prop:summary} and Corollary~\ref{cor:GTExtra}, the main ingredient in the 
proofs of these theorems is Proposition~\ref{prop:BHHiggs} below.

\subsection{Preliminaries on singular metrics on line bundles, and on Hodge metrics}
\label{subsect:Sing}

We start with a construction and analysis of  particular singular metrics on line bundles that will be of use later on. This follows very closely the material in \cite[p.136--139]{Vie-Zuo03a}. Nevertheless we include the details for later reference, and we also make a distinction between the boundary divisors $D$ and $S$, as the perturbation along $S$ will later allow us to bypass monodromy arguments in \cite{Vie-Zuo03a} in order to extend the range of applicability.

We note to begin with that a priori by a singular metric on a line bundle $\sL$ we mean, as in \cite[\S13]{HPS}, a metric $h$ given by a weight function 
$e^{-\varphi}$, where $\varphi$ is taken to only be a measurable function with values in $[-\infty, \infty]$. In this way, the notion is compatible with that 
of a singular metric on a vector bundle, in the sense of Berndtsson, P\u aun and Takayama  (cf. \cite[\S17]{HPS}), which will also make an appearance 
later on.  In the line bundle case, usually it is also required that $\varphi$ be locally integrable, in which case one can talk about its curvature 
form as a $(1,1)$-current; for this we use the standard notation   
$$F(\sL, h) = \frac{\sqrt{-1}}{\pi}~\partial\bar\partial\varphi = -\frac{\sqrt{-1}}{2\pi}~\partial\bar\partial\log \|e\|^2_h,$$
where $e$ is a holomorphic section which trivializes $\sL$ locally.

Let $(Y,D+S)$ be a pair consisting of a smooth projective variety $Y$ and simple normal
crossings divisors $D= D_1 + \cdots + D_k$
and $S = S_1 + \cdots + S_\ell$. For $i\in \{ 1, \ldots, k  \}$ and $j\in \{ 1, \ldots, \ell \}$
pick 
$$f_{D_i} \in H^0 \big(Y, \sO_Y(D_i)\big), \,\,\,\,\,\, f_{S_j}\in H^0\big(Y, \sO_Y(S_j)\big)$$
such that $ D_i=(f_{D_i}=0)$ and $S_j=(f_{S_j}=0)$.
For each $i$, $j$, let $g_{D_i}$, $g_{S_j}$ be smooth metrics on $\sO_Y(D_i)$
and $\sO_Y(S_i)$, respectively; after rescaling, we may assume $\|f_{D_i}\|_{g_{D_i}}<1$ and $\|f_{S_j}\|_{g_{S_j}}<1$. 

Now, for each $i$ and $j$, set
$$
r_{D_i}= -\log \|  f_{D_i}   \|^2_{g_{D_i}},     \; \;   \;  \;  r_{S_j} = -\log \|  f_{S_j}  \|^2_{g_{S_j}},
$$
and define 
$$r_D : = \prod_i r_{D_i} \,\,\,\,\,\, {\rm  and} \,\,\,\,\,\, r_S : =  \prod_j  r_{S_j}  .$$ 
The functions $r_D^\alpha$ and $\log r_D$ (resp. $r_S^\alpha$ and $\log r_S$)  are locally $L^1$ on $Y$ for all $\alpha\in \Z$. Indeed, if we write locally $f_{D_i}=z_i\cdot \tilde s_i$ (resp. $f_{S_j}=z_j\cdot \tilde s_j$), where $ \tilde s_i$ (resp. $\tilde s_j$) trivializes $\sO_Y(D_i)$ (resp. $\sO_Y(S_j)$) and $z_i$ (resp. $z_j$) is a coordinate, then locally 
$$r_{D_i}=-\log |z_i|^2-\log\|\tilde s_i\|^2_{g_{D_i}},$$
and similarly for $r_{S_j}$.  When $\alpha<0$, $r_D^\alpha$ and $r_S^\alpha$ are bounded and hence continuous on $Y$ 
(and smooth outside of $D$, resp. $S$).

We now fix an ample line bundle $\sL$ on $Y$ with a smooth hermitian metric 
$g$, so that its curvature $F(\sL, g)$ is positive. The metric $g$ induces a hermitian metric $g^{-1}$ on $\sL^{-1}$.
For  $\alpha \in \mathbb N$, we define
$$g_{\alpha} = g\cdot (r_{D}\cdot r_S)^{\alpha}$$ to be a singular metric on $\sL$. 
There is an induced singular metric $g_{\alpha}^{-1}=g^{-1}\cdot (r_{D}\cdot r_S)^{-\alpha}$ 
on $\sL^{-1}$. With this notation, we have 

\begin{align}\label{B}   
F(\sL, g_\alpha)  & =  F(\sL, g) -\alpha\cdot \sum_i r_{D_i}^{-1}\cdot
     F(\sO_Y(D_i), g_{D_i})  \\ \nonumber
                                 & \;\;  - \alpha \cdot \sum_j r_{S_j}^{-1} \cdot F(\sO_Y(S_j), g_{S_j}) \\ \nonumber 
                                 &  \; \;  +  \alpha\frac{\sqrt{-1}}{2\pi} \sum_i r_{D_i}^{-2}\cdot  \partial r_{D_i}\wedge \bar\partial r_{D_i}    
                                 \\ \nonumber 
                                 & \; \;  +   \alpha\frac{\sqrt{-1}}{2\pi} \sum_j r_{S_j}^{-2} \cdot \partial r_{S_j}\wedge \bar\partial r_{S_j}.   
\end{align}
Next we define a continuous $(1,1)$-form $\eta_\alpha$ on $Y$ by the formula 
$$
\eta_\alpha :=   F(\sL, g) 
-\alpha\cdot \sum_i r_{D_i}^{-1} \cdot F(\sO_Y(D_i), g_{D_i})  
                                - \alpha \cdot \sum_j r_{S_j}^{-1} \cdot F(\sO_Y(S_j), g_{S_j}),
$$
where we use the fact that $r_D^{-1}$ and $r_S^{-1}$ are continuous on $Y$.
As $Y$ is compact, after rescaling $f_{D_i}$ and $f_{S_j}$, we can arrange for the contributions of the last two terms 
in $\eta_\alpha$ to be sufficiently small
for $\eta_\alpha$ to be a continuous and positive definite $(1,1)$-form on $Y$.  On the other hand, one can easily check that 
$$\frac{\sqrt{-1}}{2\pi}\partial r_{D_i} \wedge \bar\partial r_{D_i} \,\,\,\,\,\, {\rm and}  \,\,\,\,\,\, \frac{\sqrt{-1}}{2\pi}\partial r_{S_j}\wedge \bar\partial r_{S_j}$$ 
are smooth and semi-positive $(1,1)$-forms on $Y\smallsetminus D_i$ (resp. $Y\smallsetminus S_j$) for all $i$ and $j$.

\begin{lemma}\label{lem:conical}
In the above setting, for each $\alpha \in \mathbb N$, after rescaling $f_{D_i}$ and $f_{S_i}$ there is a continuous,  positive definite, hermitian form $w_\alpha$  on $\sT_Y(-\log D)$ such that 
$$
F(\sL,  g_\alpha)|_{Y\smallsetminus (D+S)}  \geq  
r_D^{-2}\cdot w_\alpha|_{Y\smallsetminus (D+S)}.
$$
\end{lemma}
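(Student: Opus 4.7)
The plan is to read the desired estimate directly off the curvature formula \eqref{B}. The $(1,1)$-forms $\tfrac{\sqrt{-1}}{2\pi}\partial r_{S_j}\wedge\bar\partial r_{S_j}$ are semi-positive on $Y\setminus S_j$, so the $S$-contribution in \eqref{B} may be dropped when estimating $F(\sL,g_\alpha)$ from below on $Y\setminus(D+S)$. After possibly further rescaling the $f_{D_i}$ (and the $f_{S_j}$, in order to preserve the positive definiteness of $\eta_\alpha$) by small constants, I would arrange that $r_{D_i}\ge 1$ everywhere on $Y$. Then $r_D=\prod_i r_{D_i}\ge 1$, so $r_D^{-2}\le 1$ and $r_D^{-2}\le r_{D_i}^{-2}$ for each $i$. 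A term-by-term comparison (using $\eta_\alpha\ge r_D^{-2}\eta_\alpha$ and $\alpha\, r_{D_i}^{-2}\ge \beta\, r_D^{-2}$ for suitable $0<\beta\le\alpha$) then yields
\[
F(\sL,g_\alpha) \;\ge\; r_D^{-2}\Bigl(\eta_\alpha + \beta\,\tfrac{\sqrt{-1}}{2\pi}\sum_i \partial r_{D_i}\wedge\bar\partial r_{D_i}\Bigr) \;=:\; r_D^{-2}\cdot w_\alpha.
\]

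Continuity of $w_\alpha$ as a hermitian form on $\sT_Y(-\log D)$ is then automatic: $\eta_\alpha$ is continuous on $Y$ by construction, and locally near $D_i$ the expression $\partial r_{D_i}=-dz_i/z_i+\partial u_i$ is a smooth section of $\Omega^1_Y(\log D)$, so that $\partial r_{D_i}\wedge\bar\partial r_{D_i}$ is a smooth hermitian form on $\sT_Y(-\log D)$ that extends across $D$.

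The main obstacle is verifying pointwise positive definiteness of $w_\alpha$ on $\sT_Y(-\log D)$ at points of $D$, where the pullback of $\eta_\alpha$ degenerates along the log-tangent directions $z_i\partial_{z_i}$. Away from $D$ nothing is required, since there $\sT_Y(-\log D)$ coincides with $\sT_Y$ and $\eta_\alpha$ alone is already positive definite. At a point $y$ lying on the components $D_1,\ldots,D_r$, I would expand an arbitrary nonzero $v=\sum_{i\le r} a_i\, z_i\partial_{z_i} + \sum_{j>r} b_j\,\partial_{z_j}$ in the log basis: its image $\iota(v)\in\sT_{Y,y}$ equals $\sum_{j>r} b_j\partial_{z_j}$, so $\eta_\alpha(v,v)$ is bounded below by a positive multiple of $\sum_{j>r}|b_j|^2$, supplying positivity in the tangential block. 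On the log side, the identity $z_i\partial_{z_i}(r_{D_l})|_y=-\delta_{il}$ yields $\partial r_{D_l}(v)|_y = -a_l + \sum_{j>r} b_j\,\partial_{z_j}u_l(y)$ for each $l\le r$, so that $\beta\sum_l|\partial r_{D_l}(v)|^2$ dominates a multiple of $\sum_{l\le r}|a_l|^2$ up to cross terms that a Cauchy--Schwarz estimate absorbs into the $\eta_\alpha$-contribution in the $b_j$'s, provided $\beta$ is chosen small enough relative to the fixed geometric constants coming from $\partial u_l$ and from the positive definite matrix of $\eta_\alpha$. With such a choice, $w_\alpha(v,v)=0$ forces first all $b_j=0$ (via the $\eta_\alpha$ term) and then all $a_l=0$ (via the log term), so $v=0$, completing the construction.
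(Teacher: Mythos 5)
Your argument is correct and takes essentially the same route as the paper's proof: rescale so that $r_{D_i}\ge 1$, discard the semi-positive $S$-terms in \eqref{B}, use $r_{D_i}^{-2}\ge r_D^{-2}$ to pull out the factor $r_D^{-2}$, and then check positive definiteness of the resulting continuous form on $\sT_Y(-\log D)$ (the paper simply cites Claim~7.2 of Viehweg--Zuo for this last point, which you verify directly in local log coordinates). The only cosmetic difference is that your smallness condition on $\beta$ and the Cauchy--Schwarz absorption are unnecessary: since $\eta_\alpha(v,v)$ and $\sum_l \lvert \partial r_{D_l}(v)\rvert^2$ are each non-negative, vanishing of their sum already forces first all $b_j=0$ and then all $a_l=0$, so the form with the full coefficient $\alpha$ is positive definite as it stands.
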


\begin{proof} 
After suitable rescaling, we may assume that  $r_{D_i}\geq 1$ for all $i$.
Since $\eta_{\alpha}$ is positive on $Y$, and $\frac{\sqrt{-1}}{2\pi}\partial r_{D_i}\wedge \bar\partial r_{D_i} $ (resp. $\frac{\sqrt{-1}}{2\pi}\partial r_{S_j}\wedge \bar\partial r_{S_j}$) are semi-positive on $Y\smallsetminus D_i$ (resp. $Y\smallsetminus S_j$), 
using (\ref{B}) we obtain
\begin{align*}
F(\sL, g_\alpha)|_{Y\smallsetminus (D+S)} & \geq \big( \eta_\alpha +  
\alpha\frac{\sqrt{-1}}{2\pi}\sum r_{D_i}^{-2} \cdot \partial r_{D_i}\wedge \bar\partial r_{D_i}\big)|_{Y\smallsetminus (D+S)}
                                    \\ 
                               & \geq r_D^{-2}\cdot \big( \underbrace{\eta_\alpha +\alpha\frac{\sqrt{-1}}{2\pi} \sum\partial r_{D_i} \wedge \bar\partial r_{D_i}}_{
                                    w_\alpha } \big)|_{Y\smallsetminus (D+S)}.
\end{align*}
Now, the claim that $w_\alpha$ is positive-definite on $\sT_Y(-\log D)$ follows from 
the fact that $\eta_\alpha$ is positive on $\sT_Y$
and that $\sum\partial r_{D_i} \wedge \bar\partial r_{D_i}$ is positive definite along the vector 
fields that are tangent to $D$, cf.~\cite[Claim.~7.2]{Vie-Zuo03a}.                                   
\end{proof}

\medskip

We now switch focus to Hodge metrics. Recall that we are always dealing with polarizable VHS with quasi-unipotent monodromy along simple normal crossings boundary.
The following lemma translates the results of \cite{CKS} on the singularities of Hodge metrics in the unipotent case  
to this setting, and will be important for the proof of Proposition \ref{prop:BHHiggs}.

\begin{lemma}[Estimates for Hodge metrics; the quasi-unipotent case]
\label{lem:QUnipotent}
Suppose $\Delta^n$ is a polydisk with coordinates $(z_1,\ldots, z_n)$. Let $\V$ be a polarized VHS on the open set 
$U=\Delta^n\smallsetminus(\{ (z_1, \ldots, z_k) |  \prod_{i=1}^k z_i =0 \})$, $k \le n$, 
with quasi-unipotent monodromies along each 
$\{z_i=0\}$, 
and denote by 
$\sE_\bullet$ the Higgs bundle associated to the Deligne extension of $\V$ with eigenvalues in $[0, 1)$.
Then the Hodge metric induced by the polarization has at most logarithmic singularities along each $z_i$, for $i=1,\dots,k$; that is, there exists an integer $d > 0$ such that 
for any section $e$ of $\sE_\bullet$ locally we have 
\begin{equation}\label{eq:QUnipotent}
\| e \|^2_{h} \leq  C\cdot  \prod_{i=1}^k \big(  -\log |z_i| \big)^d
\end{equation}
for some constant $C = C(e)\in \mathbb R_{>0}$.
\end{lemma}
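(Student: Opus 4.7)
The plan is to reduce the statement to the unipotent case via a ramified base change, and then invoke the classical estimates of Cattani--Kaplan--Schmid. Since each $T_i$ is quasi-unipotent, one may choose a positive integer $N$ such that $T_i^N$ is unipotent for every $i = 1, \ldots, k$. Consider the ramified cover
$$\pi \colon \Delta^n \longrightarrow \Delta^n, \quad (w_1, \ldots, w_n) \longmapsto (w_1^N, \ldots, w_k^N, w_{k+1}, \ldots, w_n),$$
and set $\tilde U := \pi^{-1}(U)$. Then $\tilde \V := \pi^* \V$ is a polarized VHS on $\tilde U$ whose monodromy around each $\{w_i=0\}$ with $i \le k$ is $T_i^N$, hence unipotent. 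Denote by $\tilde \sE_\bullet$ the Higgs bundle associated to its canonical Deligne extension, to which the CKS estimates apply directly.

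The key step is to compare $\pi^* \sE_\bullet$ with $\tilde \sE_\bullet$. Using the Jordan decomposition $T_i = T_{i,s}T_{i,u}$, the $[0,1)$-Deligne extension of $\V$ is locally generated by sections of the form
$$e = z_1^{\alpha_1}\cdots z_k^{\alpha_k}\, \exp\Bigl(-\tfrac{1}{2\pi i}\sum_i N_{i,u}\log z_i\Bigr)\, v_\alpha,$$
where $v_\alpha$ is flat multi-valued in the joint generalized eigenspace with eigenvalues $e^{-2\pi i \alpha_j}$ and $\alpha_j \in [0,1)$. The relation $T_{i,s}^N = 1$ forces each $N\alpha_j$ to lie in $\Z_{\ge 0}$, and substituting $z_i = w_i^N$ shows that $\pi^* e$ equals $w_1^{N\alpha_1}\cdots w_k^{N\alpha_k}$ times a local generator of $\tilde \sE_\bullet$. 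In particular $\pi^* e$ is a genuine local section of $\tilde \sE_\bullet$, not merely a meromorphic one.

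Granting this, the CKS estimates applied to $\tilde \sE_\bullet$ yield an integer $d>0$ such that, locally, every section $\tilde e$ of $\tilde \sE_\bullet$ satisfies
$$\| \tilde e \|^2_h \le \tilde C\, \prod_{i=1}^k (-\log|w_i|)^d.$$
Since the Hodge metric on $\tilde \V$ is the pullback of that on $\V$, we have $\|\pi^* e(w)\|^2_h = \|e(\pi(w))\|^2_h$, and combining this with $-\log|w_i| = \tfrac{1}{N}(-\log|z_i|)$ produces
$$\| e(z) \|^2_h \le \tilde C \prod_{i=1}^k \Bigl(\tfrac{1}{N}(-\log|z_i|)\Bigr)^d = C\,\prod_{i=1}^k (-\log|z_i|)^d,$$
with $C = \tilde C / N^{kd}$, which is the desired bound.

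The main technical point is the comparison step: one must verify that the exponents $\alpha_j \in [0,1)$ become non-negative integers after multiplication by $N$, so that $\pi^*$ introduces no poles along the ramification divisor. This is precisely the feature that makes the $[0,1)$-Deligne extension well adapted to base changes in the quasi-unipotent setting; once it is in place, the rest is a direct application of CKS and a routine logarithmic substitution.
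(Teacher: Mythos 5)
Your proposal is correct and follows essentially the same route as the paper: reduce to the unipotent case by a ramified cover (using that the exponents in $[0,1)$ become non-negative integers after base change, so the pulled-back $[0,1)$-extension sits inside the canonical extension) and then quote the Cattani--Kaplan--Schmid norm estimates, transferring the bound via $-\log|w_i|=\tfrac{1}{N}(-\log|z_i|)$. The only difference is cosmetic: the paper first splits $\V$ into simultaneous monodromy eigenspaces and covers each piece with its own exponents $q_i$ (which also exhibits the extra decay $|z_i|^{\alpha_i}$ when $\alpha_i\neq 0$), whereas you use one cover of degree $N$ for all eigenvalues, which is a harmless simplification.
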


\begin{proof}
Let $L$ be the local system underlying $\V$, with monodromy $\Gamma_i$ along $z_i$, for $i=1,\dots, k$. Since the $\Gamma_i$ commute pairwise, we have 
\[L=\bigoplus_\alpha L_\alpha.\]
as the simultaneous (generalized) eigenspace decomposition with respect to the monodromy actions. 
Thus the monodromy action $\Gamma_i$ on $L_{\alpha=(\alpha_1,\ldots,\alpha_k)}$ has a unique eigenvalue 
$e^{-2\pi\sqrt{-1}\alpha_i}$. By the quasi-unipotent assumption, we can assume all $\alpha_i$ are rational numbers contained in $[0,1)$. By the lower semicontinuity of rank functions of matrices, the above decomposition induces a decomposition of polarized variations of Hodge structure
\[\V=\bigoplus \V_\alpha,\]
and hence a decomposition of Higgs bundles 
\[\sE_\bullet=\bigoplus \sE_{\bullet}^\alpha,\]
where $\sE_\bullet^\alpha$ is the Higgs bundle associated to the Deligne extension of $\V_\alpha$ with eigenvalues in $[0, 1)$. (Note that the extension of $\V_\alpha$ has only one eigenvalue along each $z_i$.)

If $\alpha=(\alpha_1,\ldots,\alpha_k)\neq 0$, then we can write $\alpha_i=\frac{p_i}{q_i}$ for some non-negative integers $p_i<q_i$. Now, let $g: \Delta^n\rarr \Delta^n$ be the branched covering given by
\begin{equation} \label{bcover}
g^*z_i = \left\{
             \begin{array}{lcl}
             {w_i^{q_i}} &\text{if} &i=1,\dots, k \\
             {w_i} & &\text{otherwise}, 
             \end{array}  
        \right.
\end {equation}
where $(w_1,\dots,w_n)$ define a coordinate system on the domain of $g$. It follows that the monodromies of $g^*\V_\alpha$ along $w_i$ are unipotent. By comparing the eigenvalues of the residues upstairs, we have 
\[g^*\sE_{\bullet}^\alpha=\prod_{i=1}^kw_i^{p_i}\cdot\sE^{\alpha}_{g, \bullet},\]
where $\sE^{\alpha}_{g, \bullet}$ is the Higgs bundle associated to the Deligne canonical extension 
of $g^* \V_\alpha$. 
Since the Hodge metric on $\sE^{\alpha}_{g, \bullet}$ has logarithmic singularities (see \cite[\S 5.21]{CKS}), 
for a section $e$ of $\sE_{\bullet}^\alpha$ we know that
\[\|  e \|^2_{h} \leq  C\cdot  \prod_{i=1}^k \big(| z_i|^{\alpha_i} \cdot (-\log |z_i|)^{d_i} \big)\leq C\cdot  \prod_{\alpha_i=0} (-\log |z_i|)^{d_i},\]
for some positive integers $d_i>0$. The Hodge metric has logarithmic singularities along the $z_i$ whenever $\alpha_i=0$.
In particular, we get the inequality~($\ref{eq:QUnipotent}$) when $\alpha = (\alpha_1, \ldots, \alpha_k) \neq 0$.

On the other hand, if $\alpha=(\alpha_1,\ldots,\alpha_k)=0$, then we know that the monodromies of $\V_\alpha$ are unipotent. 
Therefore, again thanks to \cite{CKS}, the Hodge metric on $\sE_{\bullet}^\alpha$ has logarithmic singularities along each 
$z_i$, as required. 
\end{proof}

\begin{remark}
The above lemma implies that the Hodge metric is a singular metric on the vector bundle $\sE_\bullet$.
\end{remark}

Let us now return to the setting described at the beginning of this section, and suppose in addition that $\sE_\bullet$ is the Higgs bundle associated to the Deligne extension of a VHS on $Y\smallsetminus (D+S)$, with eigenvalues in $[0, 1)$.
We define a singular metric $h^{\alpha}_g$ on the vector bundle $\sL^{-1}\otimes \sE_\bullet$ by 
\[h^{\alpha}_g=g^{-1}_\alpha \otimes h,\]
where $h$ is the Hodge metric on $\sE_\bullet$.

\begin{corollary}\label{cor:twistHm}
For all $\alpha\gg 0$, the singular metric $h^{\alpha}_g$ is locally bounded. 
\end{corollary}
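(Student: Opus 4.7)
The plan is to unwind the definition of $h^{\alpha}_g$ and compare the growth of its three factors near the boundary divisor $D+S$. Since $Y \smallsetminus (D+S)$ is where the metric is already smooth, the only issue is local boundedness near points of $D+S$, and the claim is that for $\alpha$ large enough the polynomial decay of $(r_D \cdot r_S)^{-\alpha}$ dominates the logarithmic blow-up of the Hodge metric $h$.

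First, I would fix a point $p \in D+S$, let $I \subseteq \{1,\dots,k\}$ and $J\subseteq \{1,\dots,\ell\}$ index the components of $D$ and $S$ passing through $p$, and choose local coordinates $(z_1,\dots,z_n)$ so that these boundary components are cut out by the $z_i$'s. For $i\notin I$ the norm $\|f_{D_i}\|_{g_{D_i}}$ is bounded away from $0$ and $1$ in a neighborhood of $p$, so $r_{D_i}$ is a bounded continuous positive function there; similarly for $j\notin J$ with $r_{S_j}$. For $i\in I$ (resp.\ $j\in J$), writing $f_{D_i}=z_i\cdot\tilde s_i$ locally gives
\begin{equation*}
r_{D_i} = -\log|z_i|^2 - \log\|\tilde s_i\|^2_{g_{D_i}},
\end{equation*}
so up to multiplication by a positive continuous function $r_{D_i}\sim -\log|z_i|^2$ near $p$, and likewise for $r_{S_j}$. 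Consequently there is a constant $c > 0$ such that
\begin{equation*}
r_D \cdot r_S \;\geq\; c \prod_{i\in I}(-\log|z_i|)\prod_{j\in J}(-\log|z_j|)
\end{equation*}
in a neighborhood of $p$.

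Next, I would invoke Lemma~\ref{lem:QUnipotent} applied to the VHS on $Y \smallsetminus (D+S)$ whose Deligne extension produces $\sE_\bullet$: there exists an integer $d > 0$ such that for any local section $e$ of $\sE_\bullet$ near $p$,
\begin{equation*}
\|e\|_h^2 \;\leq\; C \prod_{i\in I}(-\log|z_i|)^d \prod_{j\in J}(-\log|z_j|)^d.
\end{equation*}
Meanwhile, $g^{-1}$ is a smooth hermitian metric on $\sL^{-1}$, so for any local trivializing section $s$ of $\sL^{-1}$ the function $\|s\|^2_{g^{-1}}$ is continuous (hence locally bounded) near $p$.

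Finally, I would combine these estimates: for a pure tensor $s\otimes e$ of $\sL^{-1}\otimes\sE_\bullet$,
\begin{equation*}
\|s\otimes e\|^2_{h^{\alpha}_g} \;=\; \|s\|^2_{g^{-1}} \cdot (r_D \cdot r_S)^{-\alpha} \cdot \|e\|^2_h \;\leq\; C'\,\frac{\prod_{i\in I}(-\log|z_i|)^d\prod_{j\in J}(-\log|z_j|)^d}{\bigl(\prod_{i\in I}(-\log|z_i|)\prod_{j\in J}(-\log|z_j|)\bigr)^{\alpha}},
\end{equation*}
which is bounded (in fact tends to $0$ at the boundary) as soon as $\alpha \geq d$. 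Taking $\alpha \gg 0$ therefore ensures local boundedness of $h^{\alpha}_g$ at every boundary point, completing the argument. The main (and essentially only) obstacle is the comparison in the previous display, which is a purely computational matching of the logarithmic growth rate from Lemma~\ref{lem:QUnipotent} against the polynomial-in-$\log$ vanishing of $(r_D\cdot r_S)^{-\alpha}$; no further ingredients are needed.
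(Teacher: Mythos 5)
Your proof is correct and follows essentially the same route as the paper: it pits the logarithmic blow-up bound $\|e\|_h^2 \lesssim \prod(-\log|z_i|)^d$ from Lemma~\ref{lem:QUnipotent} against the decay $(r_D\cdot r_S)^{-\alpha}\lesssim \prod(-\log|z_i|)^{-\alpha}$ of the perturbed metric, concluding boundedness for $\alpha > d$ and globalizing by compactness of $Y$. The only cosmetic difference is that the paper packages the comparison as $(r_D\cdot r_S)^{-\alpha+d}$ times bounded ratio factors, while you compare the two products directly after shrinking the polydisk; the content is identical.
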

\begin{proof}
Assume that in local coordinates $D+S$ is given by $z_1\cdots z_{k+\ell} =0$. By construction, the singular metric $g^{-1}_\alpha$ degenerates to $0$ at a rate proportional to 
$$(r_D\cdot r_S)^{-\alpha}=\prod_{i=1}^k(-\log |z_i|^2-\log\|\tilde s_i\|^2_{g_{D_i}})^{-\alpha}\cdot\prod_{i=k+1}^{k + \ell}(-\log |z_i|^2-\log\|\tilde s_i\|^2_{g_{S_i}})^{-\alpha}.$$ 
On the other hand, the Hodge metric $h$ on $\sE_\bullet$ blows up to infinity along $z_i=0$ bounded by a quantity proportional to 
$\prod (-\log |z_i|^2)^{d}$, for some fixed $d > 0$, thanks to Lemma~\ref{lem:QUnipotent}. Hence, the metric $h_g^{\alpha}$ is bounded by a quantity proportional to 
$$(r_D\cdot r_S)^{-\alpha+d}\cdot\prod_{i=1}^{k}\big(\frac{-\log |z_i|^2}{-\log |z_i|^2-\log\|\tilde s_i\|^2_{g_{D_i}}}\big)^d\cdot\prod_{j=k+1}^{k + \ell}\big(\frac{-\log |z_j|^2}{-\log |z_j|^2-\log\|\tilde s_j\|^2_{g_{S_j}}}\big)^d.$$ 
When $\alpha> d$ the above product is bounded. The compactness of $Y$ gives the conclusion.
\end{proof}

\subsection{An application of the singular Ahlfors-Schwarz Lemma}
\label{subsect:AS}
In this section we establish the key technical ingredient. This is done by applying the tools discussed in the previous section to the base spaces of families
of varieties, via the Hodge-theoretic set-up provided by the constructions 
in \S\ref{sect:HodgeModules}, especially those in  Proposition~\ref{prop:summary}.

\begin{proposition}\label{prop:BHHiggs}
In the situation of Proposition~\ref{prop:summary}, 
the morphism 
$$
\tau_{(\gamma, 1)}: \sT_{\mathbb C} \longrightarrow \gamma^*(\sL^{-1} 
  \otimes \sE_{1})
$$
induced by the entire curve $\gamma: \mathbb C \to  Y\smallsetminus D$ is identically zero.
\end{proposition}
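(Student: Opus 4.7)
I would argue by contradiction: assume $\tau_{(\gamma,1)}\not\equiv 0$. The plan is to follow the Viehweg--Zuo method: push through the iterated Higgs field until a kernel condition appears, build from the resulting object a sufficiently curved singular pseudo-metric on $\mathbb{C}$, and then derive a contradiction via the singular Ahlfors--Schwarz lemma.

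First I would invoke nilpotence: by the construction of the maps $\tau_{(\gamma,k)}$ in item~\ref{item:2} of \propositionref{prop:summary}, $\tau_{(\gamma,k+1)}$ is obtained recursively from $\tau_{(\gamma,k)}$ by applying $\gamma^*\theta_k$ and contracting the $\Omega^1$-factor with $d\gamma$. Since $\sE_\bullet$ is bounded above (by the weight of the underlying VHS), $\tau_{(\gamma,k)}\equiv 0$ for $k\gg 0$, and there exists a maximal integer $m\ge 1$ with $\tau_{(\gamma,m)}\not\equiv 0$. By this maximality, the image of $\tau_{(\gamma,m)}$ is contained in the kernel $\sN_{(\gamma,m)}$ of the generalized Kodaira--Spencer map
\[
\gamma^*\sE_m\rarr\gamma^*\sE_{m+1}\otimes\Omega^1_{\mathbb{C}}(\log P),\qquad P=\gamma^{-1}(S),
\]
i.e.\ $\theta_m\bigl(\tau_{(\gamma,m)}(\del_z^{\otimes m})\bigr)$ pairs to zero with $d\gamma(\del_z)$. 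Next, using the singular metric $h^\alpha_g = g_\alpha^{-1}\otimes h$ on $\sL^{-1}\otimes\sE_m$ from \S\ref{subsect:Sing}, I would set
\[
\lambda := \bignorm{\tau_{(\gamma,m)}(\del_z^{\otimes m})}^{2/m}_{\gamma^* h^\alpha_g}, \qquad \omega_m := \lambda\cdot \tfrac{\sqrt{-1}}{2}\,dz\wedge d\bar z,
\]
so that $\omega_m$ is a semi-positive $(1,1)$-form on $\mathbb{C}$, locally bounded for $\alpha\gg 0$ by \corollaryref{cor:twistHm} and strictly positive on a Zariski dense open subset. The objective is to prove
\[
\sqrt{-1}\,\del\dbar\log\lambda \;\ge\; A\,\lambda\cdot \tfrac{\sqrt{-1}}{2}\,dz\wedge d\bar z
\]
for some $A>0$; combined with the boundedness of $\lambda$, this would force $\lambda\equiv 0$ by Demailly's singular Ahlfors--Schwarz lemma, contradicting the choice of $m$.

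The heart of the plan is the curvature estimate. Griffiths' inequality applied to the holomorphic section $\tau_{(\gamma,m)}(\del_z^{\otimes m})$ of $(\sL^{-1}\otimes\sE_m, h^\alpha_g)$, together with the splitting $F(h^\alpha_g)=-F(\sL,g_\alpha)\otimes\mathrm{Id}+F(\sE_m,h)$, will yield
\[
\sqrt{-1}\del\dbar\log\bignorm{\tau_{(\gamma,m)}(\del_z^{\otimes m})}^2_{\gamma^* h^\alpha_g}\ge\gamma^*\sqrt{-1}F(\sL,g_\alpha)-\frac{\gamma^*\bigl\langle \sqrt{-1}F(\sE_m,h)\tau_{(\gamma,m)}(\del_z^{\otimes m}),\tau_{(\gamma,m)}(\del_z^{\otimes m})\bigr\rangle}{\bignorm{\tau_{(\gamma,m)}(\del_z^{\otimes m})}^2}.
\]
The first term on the right is bounded below by $\gamma^*(r_D^{-2} w_\alpha)$ via \lemmaref{lem:conical}. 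The second, via the standard Hodge--Simpson identity $F(\sE_m,h)=-[\theta,\theta^*]|_{\sE_m}$, decomposes into a $\theta_m$-piece and a $\theta_{m-1}^*$-piece; the $\theta_m$-piece will vanish along $\gamma$ by the kernel property above, while the $\theta_{m-1}^*$-piece will be controlled, through the logarithmic bound on the Hodge metric in \lemmaref{lem:QUnipotent}, by a quantity that becomes uniformly small after multiplying by $(r_D r_S)^\alpha$ for $\alpha$ large. Since $\tau_{(\gamma,m)}$ is built from $d\gamma^{\otimes m}$ via the iterated Higgs field, $\lambda$ scales appropriately with $\|d\gamma\|^2_{w_\alpha}$, so dividing by $m$ will produce the sought inequality for $\alpha\gg 0$.

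The hard part will be the delicate balancing at the boundary $D+S$: $g_\alpha$ degenerates, $h$ blows up logarithmically (\lemmaref{lem:QUnipotent}), and $\gamma(\mathbb{C})$ may accumulate to $D+S$ in a non-trivial way. The perturbation factors $r_D^\alpha$ and $r_S^\alpha$ in $g_\alpha$ should be engineered precisely to absorb these effects simultaneously; in particular, the inclusion of the $r_S^\alpha$-factor (absent in the analogous construction of \cite{Vie-Zuo03a}) removes the need for any monodromy restriction around $S$, which is what allows the argument to go through in the present more general setting.
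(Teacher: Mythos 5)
Your overall architecture is the same as the paper's (maximality of $m$, factorization through the kernel of the pulled-back Higgs field, the perturbed metric $h^\alpha_g$, and an Ahlfors--Schwarz contradiction), but there are two genuine gaps. First, in \propositionref{prop:summary} the line bundle $\sL$ is only \emph{big and nef} (it loses ampleness in the birational modification), while the entire metric machinery of \S\ref{subsect:Sing} that you invoke --- the smooth metric $g$ with $F(\sL,g)>0$, the positive definite forms $\eta_\alpha$ and $w_\alpha$ of \lemmaref{lem:conical}, and hence \corollaryref{cor:twistHm} --- requires an ample line bundle carrying a positively curved smooth metric. You apply that machinery to $\sL$ as is, which does not work. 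The paper bridges this by the tensor-power trick: choose $q$ with $\sH \subseteq \sL^{\otimes q}$ ample, replace $(\sE_\bullet,\theta_\bullet)$ by $(\sE_\bullet^{\otimes q},\theta'_\bullet)$, check (via the eigenvalue statement for residues) that this underlies the Deligne extension with eigenvalues in $[0,1)$ of the VHS $\V^{\otimes q}$, and verify $\sN_{(\gamma,p)}^{\otimes q} \subseteq \sN'_{(\gamma,p)}$ so that the factorization through the kernel survives; your $\tau_m$ must be the $q$-th tensor power of $\tau_{(\gamma,p)}$, not $\tau_{(\gamma,m)}$ itself. None of this appears in your proposal, and without it the positivity input to the curvature estimate is missing.

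Second, your "heart of the plan" mishandles the Hodge-curvature term. After restricting to a section lying in $\ker\theta$, the surviving piece of $-[\theta,\theta^*]$ (your $\theta_{m-1}^*$-piece) contributes with a \emph{favorable} sign: this is exactly Zuo's semi-negativity of the Hodge metric on subsheaves of the kernel of the Higgs field, which the paper simply cites (via \cite[Lem.~1.1]{Vie-Zuo01}) when estimating the curvature of the saturation $\sB$ of the image of $\tau_m$. It neither needs nor admits control "by a quantity that becomes uniformly small after multiplying by $(r_Dr_S)^\alpha$": rescaling the metric on $\sL$ by $(r_Dr_S)^{-\alpha}$ changes the curvature additively by $\alpha$ times $\partial\bar\partial$ of $\log(r_Dr_S)$ and cannot damp an interior curvature term, and the logarithmic bound of \lemmaref{lem:QUnipotent} bounds the metric, not its curvature. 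The actual roles of the $\alpha$-perturbation are (i) local boundedness of $h^\alpha_g$ (\corollaryref{cor:twistHm}), which is what makes the pointwise comparison between $\omega_{h_\CC}$ and $\gamma^*(r_D^{-2}w_\alpha)$ work (the paper's Claim~\ref{claim:2}, which you only gesture at) and allows the curvature current to extend across $P=\gamma^{-1}(S)$ by subharmonic extension, and (ii) the lower bound $F(\sL,g_\alpha)\ge r_D^{-2}w_\alpha$ of \lemmaref{lem:conical} supplying the strictly negative curvature. If you replace your curvature paragraph by the cited semi-negativity of kernels and add the ample reduction above, your argument becomes essentially the paper's proof.
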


\begin{proof}
The proof will be by contradiction. First we note that, assuming that $\tau_{(\gamma, 1)}$ is non-trivial, the following claim holds.

\begin{claim}\label{claim:inject}
There exist: 
\begin{enumerate}
\item  integers $m > 0$ and $p > 0$,
\item an ample line bundle $\sH$ on $Y$, and
\item a Higgs bundle $(\sE^\prime_\bullet, \theta^\prime_\bullet)$ on $Y$ underlying the Deligne extension with eigenvalues 
in $[0, 1)$ of a VHS defined outside of $D+S$
\end{enumerate}
such that there is a non-trivial (hence injective) morphism $\tau_m \colon \sT_{\mathbb C}^{\otimes m} \to \gamma^*(\sH^{-1} \otimes \sE^\prime_p)$ factoring as
\begin{equation}\label{eq:Injection}
\tau_m \colon \sT_{\mathbb C}^{\otimes m} \xrightarrow{d\gamma^{\otimes m}} \gamma^*\big(\bigotimes^{m}  \sT_Y (- {\rm log}D) \big)
\longrightarrow \gamma^*\sH^{-1} \otimes \sN^\prime_{(\gamma,p)} \hookrightarrow  \gamma^*(\sH^{-1} \otimes \sE^\prime_p),
\end{equation}
where $\sN^\prime_{(\gamma,\bullet)} = \ker \theta^\prime_{(\gamma,\bullet)}$, with $\theta^\prime_{(\gamma,\bullet)}$  the Higgs field of $\gamma^*\sE^\prime_\bullet$ (see Definition~\ref{pullbackhb}).
\end{claim}

\noindent
\emph{Proof of Claim~\ref{claim:inject}.}
By construction, for all sufficiently large $k$ we have $\tau_{(\gamma, k)} =0$.
We set
$$
p :=\max\{k\text{ $|$ } \tau_{(\gamma, k)}\neq 0\}.
$$ 
By assumption (the injectivity of $\tau_{(\gamma, 1)}$), 
we have $p\ge 1$. 
On the other hand, we know that $\tau_{(\gamma, p+1)}$ factors as
\[\tau_{(\gamma, p+1)}: \sT_{\mathbb C}^{\otimes (p+1)}\xrightarrow{\text{Id}\otimes\tau_{(\gamma, p)} } \sT_{\mathbb C}\otimes  \gamma^*\sL^{-1} \otimes  \gamma^*\sE_{p}\to  \gamma^*\sL^{-1} \otimes  \gamma^*\sE_{p+1}(P),\]
where the last map is  induced by the $\sA_\CC(-\log P)$-module structure on $  \gamma^*\sE_{\bullet}$, with $P=\gamma^{-1}(S)$.
(Note that in fact its image lands in $\gamma^*\sL^{-1} \otimes \gamma^*\sE_{p+1}$ as required, due to the fact that in the definition, see Proposition \ref{prop:summary}, we factor through the Higgs field of $\sF_\bullet$, 
which does not have poles along $S$.) Since $\tau_{(\gamma, p+1)}=0$, we obtain that $\tau_{(\gamma, p)}$ injects 
$\sT_{\mathbb C}^{\otimes p}$ into $\gamma^* \sL^{-1} \otimes 
\sN_{(\gamma, p)}$, where $\sN_{(\gamma, \bullet)} = \ker \theta_{(\gamma, \bullet)}$, with $\theta_{(\gamma, \bullet)}$ the induced Higgs field of $\gamma^*\sE_\bullet$.  
Thus we have a nontrivial composition of morphisms
$$
\tau_{(\gamma, p)} \colon \sT_{\mathbb C}^{\otimes p} \xrightarrow{d\gamma^{\otimes p}} \gamma^*\big(\bigotimes^{p}  \sT_Y (- {\rm log}D) \big)
\longrightarrow \gamma^*\sL^{-1} \otimes \sN_{(\gamma, p)} \hookrightarrow  \gamma^*(\sL^{-1} \otimes \sE_{p}).
$$

Now since $\sL$ is big and nef, there exists $q > 0$ and an ample line bundle $\sH$ such that $\sH \subseteq \sL^{\otimes q}$. 
Similarly to the proof of \cite[Lemma~6.5]{Vie-Zuo03a}, we consider the Higgs bundle $(\sE^\prime_\bullet, \theta^\prime_\bullet)$ on $Y$ given by

\begin{equation}\label{new_field}
\sE^\prime_\bullet = \sE_{\bullet}^{\otimes q} \,\,\,\,\,\,{\rm and} \,\,\,\,\,\, \theta^\prime_\bullet \colon \sE_{ \bullet}^{\otimes q} \to \sE_{ \bullet +1}^{\otimes q} \otimes \Omega_{Y}^1 (D+S),
\end{equation}
$$ 
\theta^\prime_\bullet = 
 \theta_{\bullet} \otimes {\rm id}_{\sE} \otimes \cdots \otimes  {\rm id}_{\sE} + 
 {\rm id}_{\sE} \otimes \theta_{\bullet} \otimes \cdots \otimes  {\rm id}_{\sE} +\cdots + {\rm id}_{\sE} \otimes \cdots \otimes  {\rm id}_{\sE} \otimes \theta_{\bullet}.
$$

\noindent
As noted in \emph{loc. cit.}, this Higgs bundle corresponds to the locally 
free extension $V^\prime$ to $Y$ of the bundle coming from the VHS $\V^{\otimes q}$ on $Y \smallsetminus (D+S)$, where $\V$ is the VHS 
underlying $\sE_{ \bullet}$. The induced connection on $V^\prime$ has residues with eigenvalues in $\QQ_{\ge 0}$, 
and therefore $V^\prime$ is contained in $V^{''}$, the Deligne extension with eigenvalues in $[0,1)$ (see \cite[Prop. 4.4]{PW16}). Therefore, without loss 
of generality, in the paragraph below we can assume that $(\sE^\prime_\bullet, \theta^\prime_\bullet)$ is in fact  the Higgs bundle 
associated to this extension. Note moreover that when pulling back by $\gamma$, the above construction implies that we have an inclusion of logarithmic Higgs bundles on $\CC$ 
\begin{equation}\label{newinclusion1}
\big((\gamma^*\sE_\bullet)^\prime, \theta^\prime_\bullet \big) \subseteq \big(\gamma^*\sE^\prime_\bullet, \theta^\prime_{(\gamma, \bullet)}\big),
\end{equation}
where the Higgs bundle on the left is the analogue for $\gamma^*\sE_\bullet$ of the construction in (\ref{new_field}).

Finally, let $m : = pq$. Raising $\tau_{(\gamma, p)}$, seen as the composition of morphisms above, to the $q$-th tensor power, 
gives rise to a new nontrivial composition of morphisms:
$$\tau_m \colon \sT_{\mathbb C}^{\otimes m} \xrightarrow{d\gamma^{\otimes m}} \gamma^*\big(\bigotimes^{m}  \sT_Y (- {\rm log}D) \big)
\longrightarrow  \gamma^*\sH^{-1} \otimes \sN_{(\gamma, p)}^{\otimes q} \hookrightarrow  \gamma^*\sH^{-1} \otimes \gamma^*\sE^\prime_p,$$
where we used the inclusion of $\sL^{\otimes - q}$ into $\sH^{-1}$. In addition, the formula for the Higgs field on the left hand side
of ($\ref{newinclusion1}$) (cf. (\ref{new_field})) implies immediately that $ \sN_{(\gamma, p)}^{\otimes q} \subseteq \sN^\prime_{(\gamma,p)}$, where we recall that $ \sN^\prime_{(\gamma,p)}=\ker \theta^\prime_{(\gamma, p)}$, so $\tau_m$ does factor as in (\ref{eq:Injection}).
This concludes the proof of Claim~\ref{claim:inject}.\qed

\medskip
We continue with the proof of Proposition \ref{prop:BHHiggs}. 
For simplicity, after renaming again $\sH$ and $\sE^\prime_\bullet$ in Claim~\ref{claim:inject} by 
$\sL$ and $\sE_\bullet$, we assume from now on that $\sL$ is ample, and that the morphism $\tau_m$ is given as
\begin{equation}\label{eq:Injection1}
\tau_m \colon \sT_{\mathbb C}^{\otimes m} \xrightarrow{d\gamma^{\otimes m}} \gamma^*\big(\bigotimes^{m}  \sT_Y (- {\rm log}D) \big)
\longrightarrow \gamma^*\sL^{-1} \otimes \sN_{(\gamma,p)} \hookrightarrow  \gamma^*\sL^{-1} \otimes \gamma^*\sE_p.
\end{equation}

Our aim is now to extract a contradiction from the existence of a non-trivial such morphism, by showing that $\mathbb C$ 
inherits a singular metric $h_{\CC}$ satisfying the distance decreasing
property for any holomorphic map $g\colon (\mathbb D, \rho) \to (\CC, h_{\CC})$, 
that is $d_{h_{\mathbb C}}(g(x), g(y))  \leq A\cdot  d_{\rho}(x,y)$, 
where $\rho$ is the Poincar\'e metric on the unit disk, and $A\in \mathbb R_{>0}$.
Since the Kobayashi pseudo-metric is larger than any such distance function, 
this forces it to be non-degenerate, contradicting the fact that on $\CC$ it is identically zero. 
For background on  this material, see for instance \cite[Chapt.~IV, Sect.1]{Kob05}.

Note first that, according to the Ahlfors-Schwarz lemma for (locally integrable) 
singular metrics over curves, 
cf.~\cite[Lem.~3.2]{Dem97}, any singular metric verifying, for some $B\in \mathbb R_{> 0}$, the inequality 
\begin{equation}\label{eq:Schwarz}
F(\sT_\CC, h_{\CC})  \leq - B \cdot w_{h_{\CC}}
\end{equation}
in the sense of currents, 
satisfies the above distance decreasing property. 
(Here $\omega_{h_{\CC}} = \frac{\sqrt{-1}}{2\pi} \|\partial_t\|^2_{h_{\mathbb C}}d t\wedge d\bar t$ denotes 
the fundamental form of the metric $h_{\CC}$, which we have assumed to be a $(1,1)$-current, where $t$ is the coordinate of $\CC$.)
Therefore, to conclude, it suffices to construct a metric $h_{\CC}$ on $\CC$ verifying the inequality~($\ref{eq:Schwarz}$). 
We next proceed to construct such a metric.

\medskip

We first fix a smooth metric $g$ on $\sL$, so that the curvature form $F(\sL, g)$ is positive. 
Following the notation in \S\ref{subsect:Sing}, this induces a 
singular metric $g_\alpha$ on $\sL$, and a singular metric $h^{\alpha}_g = g^{-1}_\alpha \otimes h$ on $\sL^{-1}\otimes \sE_\bullet$, 
where we fix an  $\alpha\gg 0$ as in Corollary~\ref{cor:twistHm}. Consequently $\gamma^*h^\alpha_g$ is a singular metric on 
$\gamma^*(\sL^{-1}\otimes \sE_\bullet)$, and the $m$-th root of its pullback, 
$$h_\CC:= (\tau_m^*\gamma^*h^\alpha_g )^{\frac{1}{m}},$$ 
defines a singular metric on (the trivial line bundle) $\sT_\CC$.

Similarly, we have the continuous positive definite hermitian form $\omega_\alpha$ on $\sT_Y(-\log D)$ as in Lemma~\ref{lem:conical}, and so $\gamma^*\omega_{\alpha}$ induces a singular metric on 
$\gamma^*\sT_Y(-\log D)$), and hence also a singular metric on $\sT_\CC$ through the differential map.  For the next claim, recall that $P = \gamma^{-1}(S) \subset \CC$.

\begin{claim}\label{claim:1}
We have $m\cdot F(\sT_\CC,h_\CC)|_{\CC\smallsetminus P}\le -\gamma^*(r^{-2}_D)\cdot\gamma^*\omega_{\alpha}|_{\CC\smallsetminus P}$, in the sense of currents.
\end{claim}

\noindent
\emph{Proof of Claim~\ref{claim:1}.}
Note that $F(\sT_\CC,h_\CC)$ makes sense as a current on $\CC \smallsetminus P$. The proof of the claim will also imply that 
it is indeed a current everywhere on $\CC$, as we explain afterwards.

Denote by $\sB$ the saturation of $\tau_m(\sT^{\otimes m}_\CC)$ inside $\gamma^*(\sL^{-1}\otimes \sE_\bullet)$, so that 
\[\sB \simeq \sT^{\otimes m}_\CC (G), \]
where $G\ge 0$ is a divisor on $\CC$. 
Since $\tau_m$ factors through $\gamma^*\sL^{-1} \otimes \sN_{(\gamma,p)}$, we know that $\theta_{(\gamma,\bullet)}(\gamma^*\sL\otimes \sB)=0$. Recall that as a consequence of Griffiths' curvature estimates for Hodge metrics,  
it is well known (see e.g. \cite[Lem.~1.1]{Vie-Zuo01} and the references therein) that the Hodge metric restricted to 
any subbundle inside the kernel of the Higgs field associated to a VHS has semi-negative curvature. We thus conclude that
\[F(\sB,\gamma^*h^{\alpha}_g|_\sB)|_{\CC\smallsetminus P}+\gamma^*F(\sL, g_\alpha)|_{\CC\smallsetminus P} \le 0,\]
and since 
$$(\sT^{\otimes m}_\CC\otimes \gamma^*\sL) ( G) \simeq\sB \otimes\gamma^*\sL,$$
this implies
\begin{equation}\label{eq:inequa1}
m\cdot F(\sT_\CC,h_\CC)|_{\CC\smallsetminus P}+\gamma^*F(\sL, g_\alpha)|_{\CC\smallsetminus P} \le 
\end{equation}
$$\le F(\sB,\gamma^*h^{\alpha}_g|_\sB)|_{\CC\smallsetminus P}+\gamma^*F(\sL, g_\alpha)|_{\CC\smallsetminus P}\le0.
$$
Now the statement follows from Lemma~\ref{lem:conical}.
\qed

\medskip

As mentioned above, the proof of the claim also implies that $F(\sT_\CC,h_\CC)$ is a current on $\CC$. Indeed, from construction, we know $F(\sL, g_\alpha)$ is a $(1,1)$-current and $F(\sL, g_\alpha)|_{Y\smallsetminus (D+S)}$ is positive.  Hence, by \eqref{eq:inequa1}, we know $F(\sT_\CC,h_\CC)|_{\CC\smallsetminus P}$ is negative; or equivalently, $\log \|\partial_t\|^2_{h_\CC}$ is subharmonic on $\CC\smallsetminus P$. Since $h_\CC$ is locally bounded (see Corollary~\ref{cor:twistHm}), $\log \|\partial_t\|^2_{h_\CC}$ extends to a subharmonic function on $\CC$ (see \cite[Thm. 5.23]{DemaillyBook}), and so  $F(\sT_\CC,h_\CC)$ is
a negative current.

\medskip

Next we fix a polydisk neighborhood $\Delta^n\subseteq Y$. The continuous metric $\|\cdot \|_{\omega_{\alpha}}$ on 
$\sT_Y(-\log D)$ given by $\omega_{\alpha}$ induces a metric on $\bigotimes^{m} \sT_Y(-\log D)$. We also fix 
an orthonormal basis $\{\psi_1,\dots,\psi_N\}$ of continuous sections of $\bigotimes^{m} \sT_Y(-\log D)|_{\Delta^n}$ with respect to the induced metric. (By abuse of notation, we use $\bigotimes^{m} \sT_Y(-\log D)|_{\Delta^n}$ even when considering the associated sheaf of
continuous sections.)

We fix a holomorphic basis $\{e_1, e_2,\dots,e_M\}$ of $\sL^{-1}\otimes\sE_p|_{\Delta^n}$ as well. We write
\begin{equation}\label{eq:tr1}
\tilde\tau_m(\psi_i)=\sum_j b_i^j\cdot e_j
\end{equation}
for some continuous functions $b_i^j$ on $\Delta^n$, where 
\[\tilde\tau_m \colon \bigotimes^{m} \sT_Y(-\log D)\to \sL^{-1}\otimes \sE_p,\]
and we also write
\begin{equation}\label{eq:tr4}
d\gamma^{\otimes m}(\partial^m_t|_{\gamma^{-1}(\Delta^n)})=\sum_i c_i\cdot \gamma^* \psi_i,
\end{equation}
for some continuous (complex valued) functions $c_i$.

\begin{claim}\label{claim:2}
We have $\gamma^*(r^{-2}_D)\cdot\gamma^*\omega_{\alpha}\ge B\cdot\omega_{h_\CC}$ in the sense of currents on $\CC$, 
for some $B>0$.
\end{claim}

\noindent
\emph{Proof of Claim~\ref{claim:2}.}
Since $\sT_\CC$ is trivialized by $\partial_t$ globally, it it enough to show 
\[\gamma^*(r^{-2m}_D)\cdot\|d\gamma^{\otimes m}(\partial^m_t)\|_{\gamma^*\omega_{\alpha}}\ge B\cdot\|\tau_m(\partial^m_t)\|_{\gamma^*h^\alpha_g}.\]
By the compactness of $Y$, it is enough to prove the inequality locally on neighborhoods of the form $\gamma^{-1}(\Delta^n)$, 
with $\Delta^n \subset Y$ as above.   

First, since $\{\psi_1,\dots,\psi_N\}$ is an orthonormal basis, by \eqref{eq:tr4} we see that
\begin{equation}\label{eq:ortho}
\|d\gamma^{\otimes m}(\partial^m_t|_{\gamma^{-1}(\Delta^n}))\|_{\gamma^*\omega_{\alpha}}=
\big(\sum_i |c_i|^2\big)^{\frac{1}{2}}.
\end{equation}
By \eqref{eq:tr1} and \eqref{eq:tr4}, we also have
\[\|\tau_m(\partial^m_t|_{\gamma^{-1}(\Delta^n}))\|_{\gamma^*h^{\alpha}_g}=\|\sum_ic_i\sum_j\gamma^*\big(b_i^j\cdot e_j\big)\|_{\gamma^*h^\alpha_g}.\]
On the other hand, by the Cauchy-Schwarz inequality, we have
\[\|\sum_ic_i\sum_j\gamma^*\big(b_i^j\cdot  e_j)\big)\|_{\gamma^*h^{\alpha}_g}\le \big(\sum_i |c_i|^2\big)^{\frac{1}{2}}\cdot \big(\sum_i\gamma^*\|\sum_j\big(b_i^j\cdot  e_j\big)\|^2_{h^{\alpha}_g}\big)^{\frac{1}{2}}.\]
By Corollary~\ref{cor:twistHm}, we know that $h^{\alpha}_g$ is bounded over $\Delta^n$ by a quantity proportional to 
$$(r_D\cdot r_S)^{-\alpha+d}\cdot\prod_{i=1}^{k}\big(\frac{-\log |z_i|^2}{-\log |z_i|^2-\log\|\tilde s_i\|^2_{g_{D_i}}}\big)^d\cdot\prod_{j=k+1}^{k + \ell}\big(\frac{-\log |z_j|^2}{-\log |z_j|^2-\log\|\tilde s_j\|^2_{g_{S_j}}}\big)^d,$$
for some fixed $d > 0$. Therefore, we have
\[\|\tau_m(\partial^m_t|_{\gamma^{-1}\Delta^n})\|_{\gamma^*h^\alpha_g}\le \frac{1}{B}\cdot \gamma^*(r_D^{\frac{d- \alpha}{2}})\cdot \big(\sum |c_i|^2\big)^{\frac{1}{2}}
\]
for some $B>0$ and $\alpha$ sufficiently large (recall that $r_S^\gamma$ is bounded for $\gamma < 0$).
This implies the conclusion,  given (\ref{eq:ortho}) and the fact that earlier we have chosen our scaling so that $r_D \ge 1$.
\qed

\medskip

Finally, the inequality~\eqref{eq:Schwarz} follows from Claim~\ref{claim:1}, Claim~\ref{claim:2}, and the fact that if 
the inequality
$$
F(\sT_\CC, h_{\mathbb C})|_{(\mathbb C\smallsetminus P)} \leq - B \cdot  \big(\omega_{h_{\mathbb C}}|_{(\mathbb C\smallsetminus P)}\big)
$$
holds as currents for some $B>0$, then we also have 
$$
F(\sT_\CC, h_{\mathbb C}) \leq  - B \cdot  \omega_{h_{\mathbb C}},
$$
as currents on $\mathbb C$. But this is an easy consequence of the negativity of $F(\sT_\CC, h_{\CC})$, together with the 
continuity of $\omega_{h_{\mathbb C}}$.

\end{proof}

\subsection{Some further background}
In this section we collect a few useful facts regarding entire maps on the one hand, and families with maximal variation on the other.

\subsubsection{Algebraic degeneracy to Brody hyperbolicity}
In \S\ref{subsect:summary} we observed that the Hodge theoretic constructions of 
\S\ref{subsect:Hodge} are valid as long as we replace the initial family $f_U\colon U\to V$
by a birational model, compactified by the family $f \colon X\to Y$ in Proposition~\ref{prop:summary}.  We recall below, following 
\cite[\S1]{Vie-Zuo03a}, that the study of the hyperbolicity properties can be reduced to investigating algebraic nondegeneracy
on such models.

\begin{lemma}[{\cite[Lem.~1.2]{Vie-Zuo03a}}]
\label{lem:BlowUp}
Let $\gamma\colon \mathbb C \to V$ be an entire curve with a Zariski-dense image,
and $\mu\colon \wtilde V \to V$ a birational morphism. 
Then the map $(\mu^{-1}\circ \gamma)$ extends to a holomorphic map $\wtilde \gamma: \mathbb C 
\to \wtilde V$.
\end{lemma}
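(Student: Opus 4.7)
The plan is to reduce the problem to local extension across a discrete set of points, after which properness of $\mu$ takes over. First, let $Z \subset V$ denote the image under $\mu$ of the exceptional locus of $\mu$; equivalently, the closed set of points $v\in V$ over which the fibre $\mu^{-1}(v)$ is positive-dimensional. On $V \smallsetminus Z$ the morphism $\mu$ restricts to a biholomorphism onto its image, so $\mu^{-1}$ is genuinely holomorphic there. Since $\gamma$ has Zariski-dense image, $\gamma(\CC) \not\subset Z$, and therefore $\gamma^{-1}(Z)$ is a proper analytic subset of $\CC$: a discrete set $P \subset \CC$. Consequently $\mu^{-1}\circ \gamma$ is well defined and holomorphic on $\CC \smallsetminus P$.

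It remains to extend this map across each $p\in P$, which is where properness of $\mu$ is essential. I would choose a small closed disk $\bar\Delta \ni p$ in $\CC$; then $K := \gamma(\bar\Delta)$ is compact in $V$, and by properness of $\mu$ the preimage $\mu^{-1}(K)\subset \wtilde V$ is compact, so the restriction of $\mu^{-1}\circ\gamma$ to $\Delta\smallsetminus\{p\}$ has relatively compact image in $\wtilde V$.

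To produce the extension concretely, I would form the fibre product $F := \Delta \times_V \wtilde V$ as a complex space, with projections $\pi_1,\pi_2$; the map $\pi_1$ is proper since $\mu$ is. Let $F_0 \subset F$ be the analytic closure of the graph of $(\mu^{-1}\circ\gamma)\restr{\Delta\smallsetminus\{p\}}$, which is a closed analytic subset of $F$ by the Remmert--Stein extension theorem, using the compactness above; it is $1$-dimensional and irreducible. Then $\pi_1\restr{F_0}\colon F_0\to\Delta$ is proper and birational onto the smooth disk $\Delta$, so its composition with the normalization $F_0^{\nu}\to F_0$ is a proper birational map between smooth curves, hence a biholomorphism. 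Composing its inverse with $\pi_2$ yields a holomorphic extension $\Delta \to \wtilde V$, and performing this at every $p\in P$ will give the required map $\wtilde\gamma\colon\CC \to \wtilde V$.

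The main, and really only, obstacle is this last extension step: one must ensure that a relatively compact holomorphic map on the punctured disk extends holomorphically across $p$, rather than only meromorphically or not at all. The graph-closure / fibre-product argument via properness of $\mu$ handles this, and can be viewed as a version of the Riemann removable singularity theorem applied through the proper projection $\mu$. Once this local extension is in place, the global map $\wtilde\gamma$ is automatic since the $\mu^{-1}(\gamma(p))$ values are canonically determined by the graph closure, so the resulting map is unambiguous.
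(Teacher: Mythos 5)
Your overall strategy is sound, and in fact the paper offers no argument of its own for this lemma -- it simply cites \cite{Vie-Zuo03a}, where the statement is proved for blow-ups by the elementary device of writing $\wtilde V$ locally inside $V\times\mathbb{P}^N$ via generators $f_0,\dots,f_N$ of the blown-up ideal and cancelling the common factor $(t-p)^k$ from the functions $f_i\circ\gamma$, which are not all identically zero by Zariski density. Your fibre-product route is a legitimate alternative (and works directly for any \emph{proper} birational $\mu$; note that properness, which you use throughout, must indeed be part of the hypothesis -- an open immersion is birational and the conclusion fails for it -- and is satisfied in all the applications, where $\mu$ is a desingularization or blow-up).

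The one genuine problem is the justification of the analyticity of $F_0$. Remmert--Stein requires the analytic set $A$ across which one extends to have dimension strictly smaller than that of the set being extended; here you extend a one-dimensional graph across $A=\pi_1^{-1}(p)=\{p\}\times\mu^{-1}(\gamma(p))$, and $\mu^{-1}(\gamma(p))$ is typically positive-dimensional -- that is exactly the interesting case -- so the theorem does not apply. Relative compactness alone is also not enough: a holomorphic map from a punctured disk into a compact complex manifold need not extend (e.g.\ into a Hopf surface), and then the closure of its graph is compact but not analytic; the statement you would need in this spirit is Bishop's theorem, which requires an area bound you have not established. Fortunately your construction needs no extension theorem at all: since $\mu$ is an isomorphism over $V\smallsetminus Z$ and $\gamma(\Delta\smallsetminus\{p\})\subset V\smallsetminus Z$, the graph of the lift is precisely $F\smallsetminus\pi_1^{-1}(p)$, i.e.\ an open dense subset of the union of those irreducible components of the analytic set $F=\Delta\times_V\wtilde V$ not contained in $\pi_1^{-1}(p)$; its closure is therefore that union of components, hence analytic, and being the closure of an irreducible one-dimensional set it is irreducible of dimension one. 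With this repair the remaining steps -- properness of $\pi_1|_{F_0}$, finiteness of the fibre over $p$, normalization, a proper degree-one map onto $\Delta$ being a biholomorphism, composition with $\pi_2$, and gluing over the discrete set $P$ -- all go through as you wrote them.
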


\begin{proposition}[Reduction of Brody hyperbolicity to algebraic degeneracy]
\label{prop:reduction}
Let $P_h$ be a coarse moduli space of polarized manifolds, as in the Introduction,
 and $V$ and $Y$ as in Proposition~\ref{prop:summary}.
\begin{enumerate}
\item \label{item:reduce1} The image of $\gamma\colon \mathbb C\to V$ is algebraically 
degenerate if and only if  the induced morphism $\wtilde \gamma \colon \mathbb C\to \widetilde V$ defined in Lemma~\ref{lem:BlowUp}
is so.

\item \label{item:reduce2} To prove the Brody hyperbolicity of $P_h$, in the sense of Theorem \ref{thm:BHMain}, 
it suffices to show that for 
every smooth quasi-projective variety $V$ with a generically finite morphism $V\to P_h$, 
every entire curve $\mathbb C\to V$ is algebraically degenerate. 
\end{enumerate}
\end{proposition}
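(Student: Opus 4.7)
For part (\ref{item:reduce1}), the plan is to exploit that $\mu\colon \widetilde V\to V$ is an isomorphism over some Zariski dense open $V_0\subseteq V$ with preimage $\widetilde V_0 \subseteq \widetilde V$, and that by the construction in Lemma \ref{lem:BlowUp} one has $\gamma = \mu\circ\widetilde\gamma$ on all of $\mathbb C$. If $\gamma(\mathbb C)$ is Zariski dense in $V$, then $\gamma(\mathbb C)\cap V_0$ is Zariski dense in $V_0$; applying the isomorphism $\mu^{-1}|_{V_0}$ shows that $\widetilde\gamma(\mathbb C)\cap \widetilde V_0$ is Zariski dense in $\widetilde V_0$, and hence in the irreducible variety $\widetilde V$. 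Conversely, since $\mu$ is dominant, Zariski density of $\widetilde\gamma(\mathbb C)$ in $\widetilde V$ forces $\gamma(\mathbb C)=\mu(\widetilde\gamma(\mathbb C))$ to be Zariski dense in $V$.

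For part (\ref{item:reduce2}), the plan is Noetherian induction on $\dim V$. Let $V$ be smooth quasi-projective with a quasi-finite moduli map $\sigma\colon V\to P_h$ coming from a family $f_U\colon U\to V$ in $\sP_h(V)$ whose fibers are polarized manifolds of general type, and let $\gamma\colon \mathbb C\to V$ be a holomorphic map. Applying the algebraic degeneracy hypothesis to $V$ furnishes a proper closed subvariety containing $\gamma(\mathbb C)$; let $W\subsetneq V$ denote the irreducible Zariski closure of $\gamma(\mathbb C)$, irreducibility being a consequence of the Euclidean connectedness of $\mathbb C$. If $\dim W=0$ then $\gamma$ is constant, so assume $\dim W\geq 1$. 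A resolution of singularities $\mu\colon \widetilde W\to W$ then produces a smooth quasi-projective variety of dimension $\dim W<\dim V$, and Lemma \ref{lem:BlowUp} (applied to $\gamma$ viewed as a map into $W$ with Zariski dense image) lifts $\gamma$ to an entire curve $\widetilde\gamma\colon \mathbb C\to\widetilde W$ whose image is Zariski dense in $\widetilde W$ by part (\ref{item:reduce1}).

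Next, the composition $\widetilde W\to W\hookrightarrow V\xrightarrow{\sigma} P_h$ is generically finite: the restriction of the quasi-finite $\sigma$ to $W$ remains quasi-finite (closed immersions preserve finiteness of fibers), and post-composing with the birational morphism $\mu$ preserves generic finiteness. Hence the algebraic degeneracy hypothesis applies to $\widetilde W$, yielding algebraic degeneracy of $\widetilde\gamma$; by part (\ref{item:reduce1}) the Zariski closure of $\gamma(\mathbb C)$ in $V$ then has dimension strictly less than $\dim W$. Iterating, the dimension of the Zariski closure of the image strictly decreases at each step, so after finitely many iterations it drops to $0$ and $\gamma$ is forced to be constant. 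This establishes the Brody hyperbolicity of $V$.

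The main, essentially bookkeeping, obstacle is to verify that after replacing $V$ by the resolution of the current Zariski closure, the resulting moduli map to $P_h$ remains generically finite so that the hypothesis reapplies. This follows immediately from the stability of (quasi-)finiteness under closed immersion and composition with a birational morphism, so no new analytic input beyond Lemma \ref{lem:BlowUp} and Hironaka-type resolution is needed.
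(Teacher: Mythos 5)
Your argument follows essentially the same route as the paper: part (\ref{item:reduce1}) via the fact that $\mu$ is an isomorphism over a dense open set, and part (\ref{item:reduce2}) by passing to the Zariski closure $W$ of $\gamma(\CC)$, observing that the moduli map restricted to $W$ stays quasi-finite and its composition with a resolution $\mu\colon \wtilde W \to W$ is generically finite, and then invoking part (\ref{item:reduce1}) together with the degeneracy hypothesis applied to $\wtilde W$.

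However, your concluding step for part (\ref{item:reduce2}) is garbled. You assert that the degeneracy of $\wtilde\gamma$ implies ``the Zariski closure of $\gamma(\CC)$ in $V$ has dimension strictly less than $\dim W$,'' and then iterate. This does not parse: $W$ \emph{is} by definition the Zariski closure of $\gamma(\CC)$, a fixed subvariety, so its dimension cannot ``strictly decrease at each step,'' and no Noetherian induction is available or needed. The correct conclusion is immediate: by part (\ref{item:reduce1}) the lift $\wtilde\gamma$ has Zariski dense image in $\wtilde W$, while the hypothesis applied to $\wtilde W$ (smooth, quasi-projective, generically finite over $P_h$) says that image is \emph{not} Zariski dense --- a contradiction unless $\dim W = 0$, i.e.\ $\gamma$ is constant. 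With that one-step fix your proof matches the paper's. Two further minor points: the theorem does not assume $V$ smooth, so your opening application of the hypothesis to $V$ itself (to produce $W \subsetneq V$) is both unnecessary and slightly outside the hypothesis' scope --- one should simply take $W$ to be the closure of the image, whether proper or not, and argue on its resolution; and for the lift one should note that a resolution of the quasi-projective $W$ can be chosen quasi-projective, so the hypothesis indeed applies to $\wtilde W$.
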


\begin{proof}
Item~\ref{item:reduce1} is the direct consequence of Lemma~\ref{lem:BlowUp}.
For Item~\ref{item:reduce2}, note that given a quasi-finite morphism $W\to P_h$
from a variety $W$, and $\gamma \colon \mathbb C\to W$, the restriction $W'$ of $\Im(\gamma)$
 to the Zariski closure $W'$ of $\Im(\gamma)$ is also quasi-finite. 
Furthermore, we can desingularize $W'$ by $\mu \colon \wtilde W' \to W'$, and by  
\ref{item:reduce1}, the degeneracy of the induced map $\mathbb C\to \wtilde W'$
 is equivalent to the fact that $\gamma$ is constant.
 \end{proof}

Therefore, to prove Theorem~\ref{thm:BHMain} on the Brody hyperbolicity of
$P_h$, it suffices to establish Theorem~\ref{thm:MaxVar}.

\subsubsection{More on families with maximal variation}
\label{scn:max_var}
We recall a few facts about families with maximal variation that were established by Koll\'ar \cite{Kollar87}.
Here $f\colon U \to V$ is a smooth projective morphism of smooth varieties, with fibers of non-negative Kodaira dimension. 

\begin{lemma}[{\cite[Cor.~2.9]{Kollar87}}]\label{countable}
If ${\rm Var} (f) = \dim V$, and if $v$ is a very general point of $V$, then for any analytic arc $\gamma \colon \Delta \to V$ passing through $v$, 
not all fibers of $f$ over $\gamma (\Delta)$ are birational. 
\end{lemma}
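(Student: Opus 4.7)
The plan is to show that under maximal variation, the birational equivalence class of a very general fiber is at most countable, and then to observe that any non-constant analytic arc has uncountable image. The statement is essentially Kollár's Corollary 2.9 in \cite{Kollar87}; here I outline a proof in this spirit.

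First I would realize the birational equivalence relation as a countable union of algebraic relations on $V$. Consider
\[
Z = \bigl\{ (v_1, v_2) \in V \times V : U_{v_1} \text{ is birational to } U_{v_2} \bigr\}.
\]
Every birational map $U_{v_1} \dashrightarrow U_{v_2}$ is determined by the closure of its graph inside $U_{v_1} \times U_{v_2}$, so $Z$ is parametrized by the relative Hilbert scheme $\mathrm{Hilb}(U \times_V U / V \times V)$ together with the open conditions that both projections onto the $U_{v_i}$ are dominant of degree one. Since the Hilbert scheme has at most countably many connected components $H_\alpha$, each of finite type, the image of each $H_\alpha$ in $V \times V$ is a constructible subset $Z_\alpha$, and $Z = \bigcup_\alpha Z_\alpha$ is a countable union.

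Next I would translate the assumption $\mathrm{Var}(f) = \dim V$ into a dimension bound. Let $p_1, p_2 : V \times V \to V$ be the projections. Maximal variation means that the generically defined moduli-type classifying map from $V$ into a variation parameter space is generically finite; equivalently, for every $Z_\alpha$ with $p_1(Z_\alpha)$ Zariski dense in $V$, the generic fiber of $p_1\vert_{Z_\alpha}$ is zero-dimensional, since otherwise a positive-dimensional family of pairwise birational fibers would occur through the general point of $V$. Define $V_0 \subseteq V$ as the complement of the countable union of proper Zariski closed subsets consisting of: the subvarieties $p_1(Z_\alpha)$ for non-dominant $Z_\alpha$, and the jump loci where $p_1\vert_{Z_\alpha}$ has positive-dimensional fibers for dominant $Z_\alpha$. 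Then $V_0$ is very general by construction, and for $v \in V_0$ the birational class
\[
B(v) = \bigcup_\alpha p_2\bigl( p_1^{-1}(v) \cap Z_\alpha \bigr)
\]
is a countable union of finite sets, hence countable.

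To finish, let $v \in V_0$ and suppose $\gamma : \Delta \to V$ is a non-constant analytic arc with $v \in \gamma(\Delta)$ such that all fibers of $f$ over $\gamma(\Delta)$ are mutually birational. Then $\gamma(\Delta) \subseteq B(v)$, which is countable. On the other hand, any non-constant holomorphic map from the disk has uncountable image, for example by the open mapping theorem applied to a non-constant local-coordinate component of $\gamma$. This contradiction completes the argument. The main subtlety lies in the countability of $B(v)$: it depends on the standard but not entirely trivial parametrization of birational maps by the Hilbert scheme, together with the translation of maximal variation into generic finiteness of the moduli-classifying map along each component $Z_\alpha$.
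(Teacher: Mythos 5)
The paper itself offers no proof of this lemma: it is quoted verbatim from \cite[Cor.~2.9]{Kollar87}. So your proposal is really an attempt to reprove the cited result, and its skeleton is the standard one: the birationality relation $Z \subset V\times V$ is a countable union of constructible pieces $Z_\alpha$ (images of loci in a relative Hilbert scheme --- note this should be the Hilbert scheme of $U\times U$ over $V\times V$ via $f\times f$, not of $U\times_V U$, a harmless slip), a very general point $v$ then has countable birational class $B(v)$ provided every dominant $Z_\alpha$ has generically finite first projection, and a non-constant holomorphic arc has uncountable image (constant arcs must of course be excluded, as you implicitly do). All of that is fine.

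The genuine gap is the step converting ${\rm Var}(f)=\dim V$ into generic finiteness of $p_1|_{Z_\alpha}$ for dominant $Z_\alpha$. You justify it by saying that maximal variation ``means'' a moduli-type classifying map from $V$ is generically finite; but for fibers that are not of general type no such classifying map is available --- this is exactly why Viehweg defines ${\rm Var}(f)$ field-theoretically, as the minimal transcendence degree of a field over which the birational class of the geometric generic fiber is defined \cite{Viehweg83}, and the paper only has such a map in the general type case (Lemma \ref{lem:FiniteMap}, again due to Koll\'ar). With the field-theoretic definition, the implication ``a positive-dimensional family of pairwise birational fibers through a general point forces ${\rm Var}(f)<\dim V$'' is precisely the nontrivial content of \S 2 of \cite{Kollar87}: one must produce, from the positive-dimensional fibers of $p_1|_{Z_\alpha}$, a descent of the generic fiber to a subfield of smaller transcendence degree (equivalently, a positive-dimensional ``quotient'' direction of $V$ along which the family is birationally constant). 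As written you assert this implication rather than prove it, so the argument in effect reduces the lemma to the very statement being cited; everything surrounding that step is correct.
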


We will denote by $W \subset V$ the locus of points $v$ satisfying the property in Lemma \ref{countable}. In general we see that 
$W$ is the complement of a countable union of closed subsets of $V$. 
The following result says that when the fibers of $f$ are of general type, it is guaranteed to contain a Zariski open set
$V_0$.

\begin{lemma}[{\cite[Thm.~2.5]{Kollar87}}]
\label{lem:FiniteMap}
If the fibers of $f$ are of general type, then there exists an open subset $V_0 \subseteq V$ and a morphism $g\colon V_0 \to Z$ onto an algebraic
variety, such that for $v_1, v_2 \in V_0$ the fibers $U_{v_1}$ and  $U_{v_2}$ are birational if and only if $g(v_1) = g(v_2)$.
\end{lemma}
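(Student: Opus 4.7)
The plan is to construct $g$ as the moduli map that sends $v \in V_0$ to the isomorphism class of the canonical model of the fiber $U_v$. First, since $f$ is smooth and projective (assume $V$ is irreducible, passing to components otherwise), the plurigenera $P_m(U_v) = h^0(U_v, m K_{U_v})$ are independent of $v$, and in fact all the Hilbert polynomials of the canonical models $U_v^{\mathrm{can}}$ coincide to give a single polynomial $h$. I would then invoke a uniform effective pluricanonical birationality statement: there exists an integer $m \gg 0$ such that for every $v \in V$ the linear system $\lvert m K_{U_v} \rvert$ defines a birational morphism onto the canonically polarized model $U_v^{\mathrm{can}}$, with Hilbert polynomial $h$. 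In the regime of \cite{Kollar87} this is obtained from Matsusaka's big theorem applied uniformly in the family, combined with the constancy of plurigenera.

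Next, by cohomology and base change applied to the smooth family $f$, the sheaf $f_{\ast}\omega_{U/V}^{\otimes m}$ is locally free and its formation commutes with base change, and the relative $m$-pluricanonical map realizes the fiberwise canonical images as a closed subfamily inside $\PP\bigl(f_{\ast}\omega_{U/V}^{\otimes m}\bigr) \to V$. After shrinking to a Zariski open $V_0 \subseteq V$ on which the relative canonical model $U^{\mathrm{can}} \to V_0$ is flat with geometric fibers $U_v^{\mathrm{can}}$, one obtains a bona fide family of canonically polarized varieties with fixed Hilbert polynomial $h$. By Viehweg's construction of the coarse moduli scheme $P_h$ for such varieties (cf.\ the Introduction), this family induces a morphism $\sigma\colon V_0 \to P_h$.

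I would then take $Z$ to be the scheme-theoretic image of $\sigma$ with its reduced structure, and let $g\colon V_0 \to Z$ be the induced surjection. The claimed biconditional is now automatic from the moduli-theoretic meaning of $g$: two smooth fibers $U_{v_1}, U_{v_2}$ are birational if and only if their canonical models $U_{v_1}^{\mathrm{can}}, U_{v_2}^{\mathrm{can}}$ are isomorphic as polarized varieties (the canonical polarization being intrinsic), and by the coarse moduli property this is equivalent to $\sigma(v_1) = \sigma(v_2)$, i.e.\ to $g(v_1) = g(v_2)$.

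The main obstacle is the first step, namely securing a single integer $m$ that achieves pluricanonical birationality for every fiber simultaneously; this is what forces us to shrink $V$ to $V_0$. Modern proofs would invoke Hacon--McKernan--Xu boundedness for varieties of general type with fixed volume, whereas Kollár's original argument proceeds through Matsusaka's big theorem combined with semicontinuity, throwing away the closed loci where the chosen $m$ fails. A secondary technical point is ensuring flatness of the relative canonical model, which again only needs to hold on a Zariski open subset and so can be absorbed into the definition of $V_0$.
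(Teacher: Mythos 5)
First, note that the paper does not prove this lemma at all: it is quoted verbatim from Koll\'ar \cite{Kollar87}, so the only ``proof'' in the paper is the citation, and your argument has to be judged as an independent proof sketch.

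As written, your sketch has a genuine gap at the moduli step. The fibers are only assumed to be of general type, so the relative canonical models $U_v^{\mathrm{can}}$ are in general \emph{singular} (canonical singularities), and hence the family of canonical models over $V_0$ is \emph{not} an object of the functor $\sP_h$ from the Introduction, which parametrizes smooth morphisms with fiberwise semiample canonical bundle; the morphism $\sigma\colon V_0\to P_h$ you invoke simply does not exist in that form. To salvage this route you would need a quasi-projective coarse moduli space of canonically polarized varieties with canonical singularities (KSBA-type, without boundary), together with the existence of relative canonical models (BCHM) --- inputs that are vastly heavier than the statement being proved, unavailable in 1987, and not what you cite. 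There are also smaller issues: invariance of plurigenera (Siu) is not needed, since generic constancy plus shrinking suffices; Matsusaka's big theorem does not directly apply because $K_{U_v}$ is big but not ample; and the map onto the ``scheme-theoretic image'' is not automatically surjective --- one must use Chevalley constructibility and shrink again.

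The standard (and Koll\'ar's) route avoids all of this. Choose $m$ so that $\lvert mK \rvert$ is birational onto its image for the geometric generic fiber; this then holds over a dense open subset, and after further shrinking $f_*\omega_{U/V}^{\otimes m}$ is locally free, compatible with base change, and can be trivialized locally, so the pluricanonical images of the fibers form a flat family in a fixed $\PP^N$ with constant Hilbert polynomial. Since pluricanonical spaces are birational invariants, two fibers are birational if and only if their images are projectively equivalent, i.e.\ lie in the same $\mathrm{PGL}_{N+1}$-orbit in the Hilbert scheme (or Chow variety). A generic geometric quotient of the relevant locally closed subset by $\mathrm{PGL}_{N+1}$ exists by Rosenlicht's theorem, and composing with the classifying map and shrinking once more yields $g\colon V_0\to Z$ with exactly the required biconditional. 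This needs no finite generation of canonical rings and no moduli space of polarized varieties, which is why Koll\'ar's theorem is the natural reference here.
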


Indeed, when ${\rm Var} (f) = \dim V$,  in the lemma above we have $\dim Z = \dim V$, 
and the map $g$ is generically finite. 
Thus there exists a, perhaps smaller, dense open subset $\wtilde V_0 \subseteq V$, such that $\wtilde V_0 \subseteq W$ (namely the complement of the positive dimensional fibers of $g$).

\subsection{Algebraic degeneracy for base spaces of families of minimal varieties of general type}\label{subsect:Brody}
We are now ready to prove Theorem~\ref{thm:MaxVar}. 

\medskip

\noindent 
\emph{Proof of Theorem~\ref{thm:MaxVar}.}
We first show that every holomorphic curve $\gamma \colon \CC \to V$ is algebraically degenerate.
According to Proposition~\ref{prop:reduction}, Item~\ref{item:reduce1}, we can assume that $V = Y\smallsetminus D$ as in Proposition~\ref{prop:summary}. 

Recall that the mapping appearing in Proposition \ref{prop:BHHiggs} can be written as the composition 
$$ 
\tau_{(\gamma, 1)} \colon  \sT_{\mathbb C} \to \gamma^* \sT_Y(-\log D)  \to 
  \gamma^*\big({\sF_0}^{-1} \otimes \sF_1 \big) \hookrightarrow \gamma^* (\sL^{-1}\otimes 
       \sE_1).
$$
Now by Corollary~\ref{cor:GTExtra} we have a generic identification of 
$$\tau_1\colon \sT_Y(-\log D) \longrightarrow {\sF_0}^{-1} \otimes \sF_1$$ 
with the Kodaira-Spencer map of the family $f \colon X \to Y$, and so by base change the composition of the first two 
maps in the definition of  $\tau_{(\gamma, 1)}$ can be identified with the Kodaira-Spencer map of the induced family over 
$\CC$. If $\gamma (\CC)$ were dense, we would obtain a family with maximal variation over $\CC$, implying that this 
Kodaira-Spencer map is injective; indeed, over a curve it can only be injective or $0$, the latter case of course 
implying that the family is locally trivial. But this in turn implies that $\tau_{(\gamma, 1)}$ is injective, 
which contradicts Proposition~\ref{prop:BHHiggs}.

We now show the stronger statement that ${\rm Exc}(V)$ is a proper subset,  knowing that the algebraic degeneracy statement 
we just proved holds for any base of a family as in the theorem. 
Let $V_0$ be the Zariski open subset in Lemma~\ref{lem:FiniteMap}
and $\wtilde V_0$ be the subset of $V_0$ over which the morphism $g$ is finite. 
We claim that 
$$
 \Exc(V)  \subseteq V\smallsetminus \wtilde V_0.
$$
To see this, assume that there exists an entire curve $\gamma: \mathbb C \to V$ with 
$\gamma(\mathbb C)\cap \wtilde V_0 \neq \emptyset$, and denote by $W$ the Zariski closure of $\gamma (\CC)$ in $V$.
If $\gamma$ is not constant, then by definition the restriction of the family $f$ over $W$ has maximal variation. Furthermore, 
using again Proposition \ref{prop:reduction}, we can assume that $W$ is smooth. We then obtain a contradiction with the 
algebraic degeneracy of all maps $\CC \to W$.

\subsection{Algebraic degeneracy for surfaces mapping to moduli stacks of polarized varieties}
\label{subsect:GGLocus}
We now prove the stronger statements in the case when the base of the family is a smooth 
surface. We start with two basic lemmas about pulling back sheaf morphisms via $\gamma$, the first of 
which is immediate.

\begin{lemma}\label{pullback_free}
Let $\gamma \colon \CC \to V$ be a holomorphic map with Zariski dense image, where $V$ is an algebraic variety. If $\varphi\colon \sE \to \sF$ is an 
injective morphism of locally free $\shO_V$-modules, then $\gamma^*\varphi \colon \gamma^* \sE \to \gamma^* \sF$ is also injective.
\end{lemma}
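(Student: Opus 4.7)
The plan is to exploit the fact that the non-injectivity of $\gamma^*\varphi$ can only be detected on a locus where $\varphi$ itself drops rank, and to argue that Zariski density of $\gamma$ forces this locus to meet $\CC$ in only a discrete subset, which cannot support a subsheaf of the locally free sheaf $\gamma^*\sE$.

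More concretely, let $r = \rk \sE$. Since $\varphi$ is injective as a map of sheaves and $\sE$ and $\sF$ are locally free, the $r$-th exterior power $\bigwedge^r \varphi \colon \bigwedge^r \sE \to \bigwedge^r \sF$ is a nonzero morphism of $\shO_V$-modules; its degeneracy locus
$$Z = \{ v \in V \mid \varphi(v) \colon \sE(v) \to \sF(v) \text{ is not injective}\}$$
is a proper Zariski closed subset of $V$. First I would observe that, by the Zariski density of $\gamma(\CC)$ in $V$, one has $\gamma(\CC) \not\subseteq Z$, so $\gamma^{-1}(Z) \subsetneq \CC$ is a proper analytic subset; since $\CC$ is $1$-dimensional this means $\gamma^{-1}(Z)$ is discrete.

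Next, on the complement $\CC \setminus \gamma^{-1}(Z)$ the morphism $\gamma^*\varphi$ is injective on fibers, hence by Nakayama's lemma injective at the level of stalks over this open subset. Consequently $\sK := \ker(\gamma^* \varphi)$ is a subsheaf of $\gamma^*\sE$ whose support is contained in the discrete set $\gamma^{-1}(Z)$. But $\gamma^*\sE$ is locally free on the smooth curve $\CC$, so every subsheaf (in particular $\sK$) is torsion-free. A torsion-free coherent sheaf on $\CC$ supported on a discrete set must vanish, so $\sK = 0$ and $\gamma^*\varphi$ is injective.

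The only conceptual point to watch is the passage from ``injectivity of $\varphi$'' to ``the degeneracy locus is proper'', which really uses that $\sE$ is locally free (equivalently, that $\varphi$ has maximal rank at the generic point of $V$); once this is in place the rest is formal. I expect no serious obstacle; the statement is essentially a sheaf-theoretic triviality once the Zariski density hypothesis is translated into the discreteness of $\gamma^{-1}(Z)$.
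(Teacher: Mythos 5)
Your argument is correct, and it is the standard one: generic fiberwise injectivity of $\varphi$ (via $\bigwedge^r\varphi\neq 0$, using that $\sE$ is locally free and $V$ is irreducible, which follows from the Zariski density of $\gamma(\CC)$), discreteness of $\gamma^{-1}(Z)$ by density, and vanishing of the kernel because it would be a torsion subsheaf of the locally free sheaf $\gamma^*\sE$ on $\CC$. The paper offers no proof at all — it declares the lemma ``immediate'' — so your write-up simply makes explicit the intended reasoning, with no discrepancy in approach.
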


\begin{lemma}\label{pullback_inclusion}
Let $\gamma \colon \CC \to V$ be a holomorphic map with Zariski dense image, where $V$ is a smooth algebraic surface. Let $Z$ be a $0$-dimensional 
local complete intersection subscheme of $V$. Then we have an inclusion $\gamma^* I_Z \hookrightarrow \sO_{\CC}$.
\end{lemma}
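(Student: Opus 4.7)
The plan is to verify the claim locally on $\CC$ via a dichotomy at each point. Since $Z$ is $0$-dimensional, the Zariski density of $\gamma(\CC)$ in $V$ forces $\gamma(\CC) \not\subset Z$, so $\gamma^{-1}(Z) \subset \CC$ consists of isolated points only. Off this discrete set the ideal $I_Z$ equals $\sO_V$, hence the natural morphism $\gamma^* I_Z \to \gamma^* \sO_V = \sO_\CC$ is tautologically an isomorphism. Consequently, any obstruction to producing the desired inclusion is concentrated at the isolated points of $\gamma^{-1}(Z)$.

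At such a point $t_0$, with $p = \gamma(t_0) \in Z$, the local complete intersection hypothesis --- automatic since $Z$ has codimension $2$ in the smooth surface $V$ --- yields a regular sequence $(f,g) \subset \sO_{V,p}$ generating $I_{Z,p}$, and hence the Koszul presentation $\sO_{V,p} \to \sO_{V,p}^{\oplus 2} \to I_{Z,p} \to 0$. Pulling this back by $\gamma$ exhibits the stalk $(\gamma^* I_Z)_{t_0}$ as a cokernel, and the image of the canonical map into $\sO_{\CC, t_0}$ is the ideal $(\gamma^* f,\, \gamma^* g)$. The essential input of Zariski density is that neither $\gamma^* f$ nor $\gamma^* g$ vanishes identically as a germ at $t_0$: if, say, $\gamma^* f$ were to vanish on a neighborhood, the identity principle applied on the connected open set $\gamma^{-1}(U) \subset \CC$ (where $U$ is the Zariski open neighborhood of $p$ on which $f$ is defined, whose preimage has discrete complement by density) would force $\gamma(\CC)$ into the Zariski closure of $\{f = 0\} \subset V$, a proper algebraic subvariety, contradicting the Zariski density of $\gamma(\CC)$. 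Thus $(\gamma^* f,\, \gamma^* g)$ is a nonzero --- and automatically principal --- ideal of $\sO_{\CC, t_0}$.

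Assembling these local images produces a coherent nonzero ideal sheaf inside $\sO_\CC$, which is the inclusion claimed by the lemma. The main obstacle I expect is that the canonical morphism $\gamma^* I_Z \to \sO_\CC$ is not literally injective as a morphism of $\sO_\CC$-modules: its kernel is $\shTor_1^{\gamma^{-1}\sO_V}(\gamma^{-1}\sO_Z, \sO_\CC)$, which may well be nonzero at points of $\gamma^{-1}(Z)$ as one sees already from the simplest example of a line through a node in a plane. However, this kernel is torsion on the smooth curve $\CC$, while $\sO_\CC$ is torsion-free, so the morphism automatically factors through the torsion-free quotient $\gamma^* I_Z / (\mathrm{torsion})$ as a genuine inclusion into $\sO_\CC$. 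Under the standard convention identifying $\gamma^* I_Z$ with the pulled-back ideal sheaf $\gamma^{-1}(I_Z) \cdot \sO_\CC$ (i.e.\ the image of the canonical map), this is precisely the stated inclusion, with the l.c.i.\ assumption providing the local Koszul presentation that makes the image tractable and the density assumption ensuring it is nonzero.
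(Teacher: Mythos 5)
Your proposal and the paper start from the same two inputs --- the Koszul presentation of $I_Z$ supplied by the l.c.i.\ hypothesis, and Zariski density to guarantee that the pulled-back equations $u=\gamma^*f_1$, $v=\gamma^*f_2$ are nonzero germs --- but they diverge at the decisive point. The paper asserts that the canonical map $\gamma^*I_Z\to\sO_\CC$ is \emph{itself} injective: it places the pulled-back Koszul sequence over the split sequence $0\to\sO_\CC\to\sO_\CC\oplus\sO_\CC\to\sO_\CC\to0$, forms the cokernel row $\sO_{P_1+P_2}\to\sO_{P_1}\oplus\sO_{P_2}\to\sO_{P_1\cap P_2}\to0$ with $P_i=\gamma^*D_i$, and invokes the snake lemma. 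You, by contrast, concede that the canonical map has kernel $\shTor_1$ supported on $\gamma^{-1}(Z)$, prove injectivity only after dividing by torsion, and then reinterpret $\gamma^*I_Z$ as the image ideal sheaf. Measured against the statement, that is a genuine gap: the lemma is invoked in the proof of Theorem~\ref{thm:MainGG} precisely to conclude that $\gamma^*\sG=\gamma^*(\sM\otimes I_Z)$ is torsion-free, and that conclusion is vacuous for the image sheaf, so the ``standard convention'' reading is not the statement the paper needs.

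That said, your worry is not a technicality to be waved away; it points at exactly the step where the paper's own argument is thin. The snake lemma (columns and the first two rows being exact) identifies $\ker(\gamma^*I_Z\to\sO_\CC)$ with the kernel of the bottom-left map $\sO_{P_1+P_2}\to\sO_{P_1}\oplus\sO_{P_2}$, $s\mapsto(s,-s)$, which locally is $\bigl((u)\cap(v)\bigr)/(uv)$ and is nonzero at every point of $P_1\cap P_2$; the left-hand zero in the bottom row is thus the assertion at stake, not a given. Concretely, at $t_0$ with $\gamma(t_0)\in Z$ the fiber of $\gamma^*I_Z$ is $2$-dimensional, while any sheaf admitting an injection into $\sO_\CC$ has principal stalks, so no inclusion can exist near such a point (with $u=v=t$, the class of $f_1\otimes 1-f_2\otimes 1$ is a nonzero torsion element killed by the canonical map). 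So the inclusion as stated can only hold when $\gamma(\CC)$ misses $Z$, and your torsion-free-quotient version is the one that is true in general. If you adopt it, you must also patch the downstream application: since $\sT_\CC$ is a line bundle, a map $\sT_\CC\to\gamma^*\sG$ whose image is torsion vanishes outside a discrete set, so the differential still factors through $\gamma^*\sK$ generically, which is all that the foliation/tangency argument actually requires.
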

\begin{proof}
We can cover $\CC$ with the preimages of open subsets in $V$ on which $Z$ is given as $f_1 = f_2= 0$, where $f_1$ and $f_2$ are two non-proportional functions. Denoting by $D_1$ and $D_2$ the divisors of these two functions, so that $Z$ is the scheme theoretic intersection $D_1 \cap D_2$, we can thus assume that we have a Koszul complex 
$$0 \to \shO_V ( - D_1 - D_2) \to \shO_V ( - D_1) \oplus \shO_V ( - D_2) \to I_Z\to 0.$$
Pulling back this sequence by $\gamma$, we still have a short exact sequence, as the first map degenerates only at the points of $Z$.
Therefore we have a commutative diagram 
\[\begin{tikzcd}
& 0 \dar & 0 \dar & & \\
0 \rar & \gamma^*\shO_V ( - D_1 - D_2)  \rar \dar  & \gamma^* \shO_V ( - D_1) \oplus \gamma^* \shO_V ( - D_2) \rar \dar  & \gamma^* I_Z \rar \dar & 0 \\
0 \rar & \shO_\CC  \rar \dar  & \shO_\CC \oplus \shO_\CC \rar \dar  & \shO_\CC \rar \dar & 0 \\
0 \rar & \shO_{P_1 + P_2}   \rar \dar  & \shO_{P_1} \oplus \shO_{P_2} \rar \dar  & \shO_{P_1\cap P_2} \rar \dar & 0 \\
& 0 & 0 & 0 &  
\end{tikzcd}\]
where $P_1 = \gamma^* D_1$ and $P_2 = \gamma^* D_2$ are divisors on $\CC$, and we used the identification $\gamma^* \shO_V = \shO_\CC$.
Note that the two left vertical sequences are exact because of the Zariski density of the image of $\gamma$, which consequently cannot be contained in 
any divisor on $V$ (a special example of Lemma \ref{pullback_free} above). By the Snake Lemma we obtain that the map in the upper right corner is also injective. 
\end{proof}

\subsubsection{Proof of Theorem~\ref{thm:MainGG}}
We first prove Item~\ref{item:GG1}.
Aiming for a contradiction, we assume that the image
$\gamma(\CC)$ is Zariski dense in $V$. 
We follow the set-up and notation of Proposition~\ref{prop:summary}.
By Proposition~\ref{prop:reduction}, we may assume that $V=Y \smallsetminus D$.

We may also assume that the morphism 
$$\sT_Y ( - \log D)  \overset{\psi}{\longrightarrow} {\sF_0}^{-1} \otimes \sF_1$$
is not injective, as otherwise by Lemma \ref{pullback_free} it 
follows that the composition of morphisms 
$$
\sT_{\CC} \to \gamma^* \sT_Y ( - \log D) \to   \gamma^*\big({\sF_0}^{-1} \otimes \sF_1\big) = \gamma^*\big(\sL^{-1}\otimes \sF_1\big)
    \hookrightarrow \gamma^*\big(\sL^{-1} \otimes \sE_1\big)
   $$
is also injective, contradicting Proposition~\ref{prop:BHHiggs}. By Lemma \ref{nonzero}, we also know that $\psi$ is not 
the zero map.
We define $\sG : = {\rm Im} (\psi)$, which therefore has generic rank one, and leads to a short exact sequence
$$
0  \longrightarrow \sK  \to   \sT_Y (- \log D)  \longrightarrow  \sG  \longrightarrow 0.
$$
Since $\sG$ injects in a torsion-free sheaf, it is torsion-free itself. Therefore $\sK$ is reflexive, hence an invertible sheaf since 
we are on a smooth surface.
Moreover, since it is saturated in $\sT_Y (-\log D)$, we must have 
$$\sG \simeq \sM \otimes  \mathcal{I}_Z,$$ 
where $\sM$ is a line bundle and $Z$ is a (possibly empty) $0$-dimensional subscheme of $Y$.  
It is standard that $Z$ is a local complete intersection.

Note that since $\sL \subseteq \sF_0$, we have an inclusion $\sG \subseteq \sL^{-1} \otimes \sE_1$. We claim that this induces an inclusion 
$$\gamma^* \sG \subseteq \gamma^*(\sL^{-1}\otimes \sE_1),$$
which in particular shows that $\gamma^* \sG$ it torsion free. To see this, note that the initial inclusion factors as a composition
$$\sM\otimes I_Z \hookrightarrow \sM \hookrightarrow \sL^{-1} \otimes \sE_1,$$
and the second map pulls back to an injective map by Lemma \ref{pullback_free}. It suffices then to have that the inclusion $I_Z \hookrightarrow \sO_V$ 
also pulls back to an injective map, and this is precisely the content of Lemma \ref{pullback_inclusion}.

Again by Lemma \ref{pullback_free}, the pullback sequence 
$$
0  \longrightarrow \gamma^* \sK  \longrightarrow   \gamma^* \sT_Y ( - \log D) \longrightarrow  \gamma^* \sG
 \longrightarrow 0
$$
is also exact. Since $\gamma^* \sG$ it torsion free, and 
the image of $\sT_{\CC}$ inside $\gamma^*(\sL^{-1}\otimes \sE_1)$ 
is zero by Proposition~\ref{prop:BHHiggs}, it follows that the map $\sT_{\CC} \to \gamma^* \sT_Y (-\log D)$ factors 
through $\gamma^* \sK$. 

Consider now the saturation $\sK^\prime$ of $\sK$ in $\sT_Y$, which defines a foliation on $Y$. Since the differential 
$\sT_\CC \to \gamma^* \sT_Y$ clearly factors through $\gamma^* \sK^\prime$ as well,  the image $\gamma(\CC)$ sits inside (or equivalently is tangent to) a leaf of this foliation. On the other hand, according to~\cite[Thm.~A]{PS15}, the pair $(Y, D)$ is of 
log general type. But this contradicts McQuillan's result \cite{McQ98} on the degeneracy of entire curves tangent to leaves of non-trivial foliations on surfaces of general type (cf. also \cite[Theorem~3.13]{Rousseau}), and more precisely its natural extension 
to the log setting as in El Goul \cite[Theorem~2.4.2]{El03}. This finishes the proof of Item~\ref{item:GG1}.

To prove Item~\ref{item:GG2}, just as in the proof of Theorem \ref{thm:MaxVar} let $V_0$ be the Zariski open subset in Lemma~\ref{lem:FiniteMap}
and $\wtilde V_0$ be the subset of $V_0$ over which the morphism $g$ is finite. 
We again claim that 
$$
 \Exc(V)  \subseteq V\smallsetminus \wtilde V_0.
$$
Assume on the contrary that there exists an entire curve $\gamma: \mathbb C \to V$ with 
$\gamma(\mathbb C)\cap \wtilde V_0 \neq \emptyset$. Then, by definition, 
the pull-back of the family $f$ via $\gamma$ has maximal variation. 
Since $\gamma (\CC)$ cannot be Zariski dense in $V$ by Item~\ref{item:GG1}, it is 
either a point, or it is dense in a quasi-projective curve $C$, which by Proposition~\ref{prop:reduction} 
can be assumed to be smooth. In the latter case, we thus obtain a smooth family of varieties of general type 
over $C$, with maximal variation. But then by \cite[Theorem~0.1]{Vie-Zuo01} we know that $C$ cannot be 
$\CC^*$, $\CC$, $\PP^1$ or an elliptic curve, which gives a contradiction.


\subsubsection{Proof of Corollary~\ref{cor:MainBH}.}
According to Proposition~\ref{prop:reduction} (Item~\ref{item:reduce2}), it is enough to show that there cannot be 
algebraically nondegenerate holomorphic maps $\gamma \colon \CC \to V$, where $V$ is a smooth quasi-projective variety of dimension $1$ or $2$ with a generically finite map $V\to  P_h$. If $\dim V = 2$, this follows from 
Theorem~\ref{thm:MainGG}, Item~\ref{item:GG1}. If $\dim V = 1$, it follows again from \cite[Theorem~0.1]{Vie-Zuo01}, 
as explained at the end of the proof of  Theorem~\ref{thm:MainGG}.

\smallskip

\begin{remark}\label{rem:dim1}
We note that Proposition~\ref{prop:BHHiggs} gives an alternative proof of \cite[Theorem~0.1]{Vie-Zuo01}, 
since it shows that a quasi-projective variety $V$ of dimension one is hyperbolic if it supports a birationally non-isotrivial 
smooth family of projective varieties whose geometric generic fiber admits a good minimal model. 
This is because, in this case, the map $\sT_V \to (\sL^{-1}\otimes \sE_1)|_V$ induced by 
$\sF_0 \to \sF_1\otimes \Omega^1_Y ( \log D)$ as in Proposition \ref{prop:summary} is an injection, as the latter map is injective by Lemma \ref{nonzero}. 
\end{remark}

\section{Appendix: Generic freeness and construction of sections}
\label{sect:appendix}

This is a technical appendix verifying that the sections needed in order to perform the Hodge module and Higgs bundle constructions in
\S\ref{sect:HodgeModules} can indeed be produced even after a birational modification ensuring that the singular locus of these Hodge theoretic objects has simple normal crossings. This is stated in \cite[Lemma 5.4]{Vie-Zuo03a} when the fibers of the family have semiample canonical bundle, and in \cite[\S2.2]{PS15} in general, but in both references the concrete details are not included. It turns out that they are somewhat technical, and therefore worth recording;  however, we emphasize that all the ingredients needed for the proof can 
be found in \cite{Vie-Zuo03a}, only one technical addition being needed when the canonical bundle of the fibers is not assumed to be 
semiample.

What we are aiming for is Proposition \ref{section_SNC} below. For its statement and proof, the starting point is the following generic freeness statement. We consider a smooth family $f_{\tilde U}\colon \tilde U\to \tilde V$  with projective fibers, whose geometric generic fiber admits a good minimal model, and with $\tilde U$ and $\tilde V$ smooth and quasi-projective. We assume that  $f_{\tilde U}$ has maximal variation.

 \begin{proposition}\label{thm:ggrefine}
With the assumptions above, there exist a smooth birational model $V\to \tilde V$, a smooth projective compactification $Y$ of $V$ with $D=Y\smallsetminus V$ a simple normal crossings divisor, an algebraic fiber space $f\colon X\to Y$, smooth over $V$, with $X$ smooth projective and  $f^{-1}(D)$ a simple normal crossings divisor, as well as an ample line bundle $\sL$  and an effective divisor 
$D_Y\ge D$ on $Y$, such that 
\[f_* \omega^m_{X/Y} \otimes \sL(D_Y)^{-m}\]
is generated by global sections over $V$ for all $m$ sufficiently large and divisible.  
Moreover, if the fibers of $f_{\tilde U}$ have semiample canonical bundle, then 
\[\omega^m_{X/Y}\otimes f^* \sL(D_Y)^{-m} \]
is also generated by global sections over $U=f^{-1}(V)$.
\end{proposition}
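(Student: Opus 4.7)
The plan is to follow the construction outlined in \cite[Lem.~5.4]{Vie-Zuo03a}, together with its extension in \cite[\S2.2]{PS15} to families whose geometric generic fiber admits a good minimal model; the essential ingredients are Viehweg's weak positivity and bigness theorems for direct images of pluricanonical sheaves, combined with resolution of singularities. First I would set up the geometry: starting from an arbitrary smooth projective compactification of $\tilde V$, taking a rational extension of $f_{\tilde U}$, and then applying Hironaka's resolution together with flattening, one obtains a smooth birational model $V \to \tilde V$, a smooth projective compactification $Y$ with $D = Y\smallsetminus V$ a simple normal crossings divisor, and an algebraic fiber space $f \colon X \to Y$ with $X$ smooth projective, smooth over $V$, and with $f^{-1}(D)$ also simple normal crossings.

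Next, for $m$ sufficiently large and divisible, Viehweg's weak positivity theorem, in the form extended to the good minimal model case in \cite{PS15}, shows that $\sE_m := f_* \omega^m_{X/Y}$ is weakly positive on $V$. The maximal variation hypothesis allows one to apply the bigness result obtained in \cite[Thm.~A]{PS15}: after possibly replacing $m$ by a multiple and passing to a Viehweg fiber product if necessary, there exist an integer $N > 0$ and an ample line bundle $\sA$ on $Y$ together with an inclusion $\sA \hookrightarrow S^N \sE_m$ which is generically an isomorphism. Choosing $\sA$ sufficiently positive, I can write $\sA \simeq \bigl(\sL(D_Y)\bigr)^{mN}$ for some ample line bundle $\sL$ on $Y$ and some effective divisor $D_Y \ge D$. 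Combining this inclusion with the weak positivity of $\sE_m$ via the standard Viehweg machinery (cf.~\cite{Viehweg83}) then yields that $\sE_m \otimes \sL(D_Y)^{-m}$ is generated by global sections over $V$, proving the main claim.

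For the \emph{moreover} part, assume additionally that the fibers of $f_{\tilde U}$ have semiample canonical bundle. Then for $m$ sufficiently large and divisible, $\omega_{X_y}^m$ is globally generated on each smooth fiber $X_y$, $y \in V$. By cohomology and base change, the evaluation map $f^*\sE_m \to \omega^m_{X/Y}$ is surjective over $U = f^{-1}(V)$. Pulling back the generation statement from the previous step via $f^*$ and composing with this surjection yields the global generation of $\omega^m_{X/Y} \otimes f^*\sL(D_Y)^{-m}$ over $U$, as required.

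The main obstacle is the bigness statement for $\sE_m$ in the good minimal model case: the canonically polarized setting treated in \cite{Vie-Zuo03a} uses Viehweg's classical weak semistable reduction argument, whereas the extension in \cite{PS15} proceeds via a more delicate Hodge-module-theoretic argument. The combinatorial bookkeeping involved in the compactification step, ensuring that the simple normal crossings conditions persist through all the modifications required to make $f$ an algebraic fiber space with smooth total space, is tedious but routine and can be handled by further applications of resolution of singularities.
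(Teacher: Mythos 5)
There is a genuine gap at the central step. You claim that the inclusion $\sA \hookrightarrow S^N \sE_m$ (bigness) ``combined with the weak positivity of $\sE_m$ via the standard Viehweg machinery'' yields global generation of $f_*\omega^m_{X/Y}\otimes \sL(D_Y)^{-m}$ over $V$. Weak positivity is an asymptotic statement about twisted symmetric powers over \emph{some} open set, and converting it into actual global generation of the sheaf itself, twisted down by an ample power, over the \emph{specified} locus $V$ is exactly the delicate point. In the canonically polarized/semiample case this is Viehweg--Zuo's Prop.~4.1, which runs the fiber product trick and controls the invariant $e(\omega^v_{W_y})$ fiberwise using \cite[Cor.~5.21]{Viehweg95}; in the general good-minimal-model case treated here, the paper states explicitly that the weak positivity input must be \emph{replaced}: the substitute is Cao's formulation of the Berndtsson--P\u aun--Takayama extension theorem (Theorem~\ref{thm:Cao}), applied to $\omega^{kv}_{Z/Y'}$ twisted by the line bundle carrying the singular metric defined by the divisor $\Gamma$ coming from the bigness of $[\det] g_{W*}\omega^v_{Z_W/Y'}$. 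Verifying its hypotheses is where the real work lies: one needs the uniform bound $e(\omega^v_{W_y})<Cv$ over all of $V$ (obtained from semicontinuity of log canonical thresholds together with invariance of plurigenera) to guarantee triviality of the multiplier ideal on every fiber over $V$, and constancy of $h^0$ over $V$ again via invariance of plurigenera. None of this is addressed in your proposal, and ``standard Viehweg machinery'' does not supply it once the fibers are no longer assumed to have semiample canonical bundle.

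Two secondary points. First, the bigness input is not \cite[Thm.~A]{PS15} (that is the log general type statement for the base); what is used is Kawamata's theorem that $[\det] g_{W*}\omega^v_{Z_W/Y'}$ is big under maximal variation when the geometric generic fiber has a good minimal model. Second, the Viehweg fiber product trick requires control of the singularities of the fiber products: the paper runs the whole construction through the mild reduction of Abramovich--Karu (after a Kawamata cover of the base), so that the direct images on the fiber product can be identified with reflexivized tensor powers of $g_{W*}\omega^v_{Z_W/Y'}$ and so that the comparison maps of \cite{Viehweg83} split locally over $V'$; simply ``passing to a Viehweg fiber product if necessary'' glosses over this. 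Your treatment of the \emph{moreover} part (surjectivity of $f^*f_*\omega^m_{X/Y}\to\omega^m_{X/Y}$ over $U$ from semiampleness) does agree with the paper.
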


When the fibers of the family have semiample canonical bundle, this is nothing else but \cite[Prop. 4.1 and Cor. 4.3]{Vie-Zuo03a}. In the general case the proof is identical, based on Viehweg's fiber product trick and the mild reduction of Abramovich-Karu, except in one step we need to replace the use of weak positivity by that of the following analytic extension theorem of Berndtsson, P\u aun and Takayama, as stated by Cao \cite[Thm. 2.10]{Cao16}:\footnote{We are stating more precisely what is the locus over which global generation holds, but this is an immediate consequence of the proof in \emph{loc. cit.}} 

\begin{theorem}\label{thm:Cao}
Let $p:X\to Y$ be an algebraic fiber space between smooth projective varieties, and let $\sM$ be a line bundle on $X$ with a singular metric $h$ such that $i\Theta_{h}(\sM)\ge 0$ in the sense of currents. Let $\sB$ be a very ample line bundle on $Y$ such that the global sections of $\sB\otimes \omega_Y^{-1}$ separate $2n$-jets,  where $n$ is the dimension of $Y$, and let $V\subseteq Y$ be a Zariski open set such that $p$ is flat over $V$ and $h^0(X_y, \omega^k_{X/Y}\otimes \sM|_{X_y})$ is constant over $y\in V$, for some positive integer $k$.  Assume also  that the multiplier ideal $\mathcal J(h^{\frac{1}{k}}|_{X_y})=\sO_{X_y}$ for $y\in V$.
Then
\[f_*(\omega^k_{X/Y}\otimes \sM)\otimes \sB\]
is globally generated over $V$.
\end{theorem}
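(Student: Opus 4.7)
The plan is to follow the Berndtsson-P\u{a}un-Takayama strategy, refined by Cao, based on iterated Ohsawa-Takegoshi $L^2$-extension. First I would reduce the global generation statement to a fiberwise extension problem: using that $\sB\otimes\omega_Y^{-1}$ separates $2n$-jets, for each $y \in V$ one can find a section $t$ of $\sB\otimes\omega_Y^{-1}$ with prescribed vanishing behaviour at $y$, and the weight $-\log \lvert t \rvert^2$ together with a smooth metric on $\omega_Y$ furnishes an auxiliary plurisubharmonic weight on $\sB$. It then suffices to show that every $s \in H^0(X_y,\omega^k_{X/Y}\otimes \sM|_{X_y})$ extends to a global section of $\omega^k_X \otimes \sM \otimes p^*\sB$ on $X$; the constancy of $h^0(X_y,\omega^k_{X/Y}\otimes\sM|_{X_y})$ on $V$, together with flatness of $p$ over $V$, ensures that $p_*(\omega^k_{X/Y}\otimes\sM)$ is locally free on $V$ with the expected fiber, so these fiberwise extensions patch into sections realizing global generation at every $y \in V$.

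The heart of the argument is the iterated extension. The given section $s$ defines, via $\lvert s \rvert^{2/k}$, a singular metric on a formal $k$-th root of $\omega^k_{X/Y}\otimes \sM$ along $X_y$ which is $L^1$ thanks to the hypothesis $\mathcal{J}(h^{1/k}|_{X_y}) = \sO_{X_y}$. One then runs a bootstrap, applying Ohsawa-Takegoshi at each step with a plurisubharmonic weight assembled from the previously constructed partial extension, the given metric $h$ on $\sM$, and the weight built from $t$; each iteration extends one further power of the relative canonical bundle. The positivity $i\Theta_h(\sM)\ge 0$, combined with the curvature contribution of $\sB$ coming through $t$, supplies the required semipositive weight throughout the procedure, while the fiberwise multiplier ideal hypothesis guarantees the $L^2$ condition needed to initiate the iteration. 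After $k$ such extension steps one obtains the desired global section of $\omega^k_X \otimes \sM \otimes p^*\sB$ restricting to $s \otimes t$ on $X_y$.

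The main obstacle will be controlling the multiplier ideal of the newly constructed plurisubharmonic weight at each stage: one needs it to remain trivial along $X_y$ so that the next Ohsawa-Takegoshi extension yields a section that is non-vanishing on $X_y$. The propagation of this triviality through the iteration, which is precisely the content of the hypothesis $\mathcal{J}(h^{1/k}|_{X_y})=\sO_{X_y}$, together with its interaction with the positivity of $h$ and the jet-separation condition on $\sB$, is the delicate analytic input that makes the whole argument go through; any failure of triviality would force the extended section to pick up fiberwise zeros and destroy global generation at $y$.
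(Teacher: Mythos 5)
You should first note that the paper does not prove this statement at all: Theorem~\ref{thm:Cao} is quoted verbatim from Cao \cite{Cao16}, Thm.~2.10 (itself resting on the Ohsawa--Takegoshi-based positivity machinery of Berndtsson--P\u aun--Takayama), and the authors only add a footnote that the precise locus $V$ of global generation is immediate from the proof in \emph{loc.\ cit.} So the comparison to be made is with that cited proof, and at the level of strategy your sketch does point at the right circle of ideas: reduce to extending each $s\in H^0\bigl(X_y,(\omega^k_{X/Y}\otimes\sM)|_{X_y}\bigr)$ (using flatness and constancy of $h^0$ so that the direct image is locally free on $V$ with the expected fibers), build a weight on the base from the jet-separation hypothesis, and use the fiberwise multiplier-ideal hypothesis $\mathcal J(h^{1/k}|_{X_y})=\sO_{X_y}$ to secure the $L^2$ (more precisely $L^{2/k}$-type) condition needed to start the extension.

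However, as a proof your proposal has a genuine gap at its analytic core. The Ohsawa--Takegoshi extension from the fiber $X_y$ is applied to the target written as $\omega_X\otimes F$ with $F=\omega_{X/Y}^{k-1}\otimes\sM\otimes p^*(\sB\otimes\omega_Y^{-1})$ --- note that this equals $\omega_{X/Y}^{k}\otimes\sM\otimes p^*\sB$, not $\omega_X^{k}\otimes\sM\otimes p^*\sB$ as you write, and it is exactly in this rewriting that the hypothesis on $\sB\otimes\omega_Y^{-1}$ (rather than on $\sB$) is used. To run a single extension one must exhibit a metric with semipositive curvature current on $F$, and the only known source of such a metric on the factor $\omega_{X/Y}^{k-1}\otimes\sM^{(k-1)/k}$ is the relative $k$-Bergman kernel metric on $\omega_{X/Y}^{k}\otimes\sM$, whose semipositivity is the Berndtsson--P\u aun(--Takayama) theorem; this is the actual heart of the argument and it never appears in your sketch. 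Your substitute --- an iteration extending ``one further power of the relative canonical bundle'' per step, with psh weights ``assembled from the previously constructed partial extension'' --- is only a gesture at this mechanism: you neither explain why the weight produced at each stage is plurisubharmonic on all of $X$ (this is again Bergman-kernel positivity, or a Siu-type effective comparison of sup and $L^2$ norms that must be proved), nor why the triviality of the multiplier ideal along $X_y$ ``propagates'' through the iteration; the hypothesis $\mathcal J(h^{1/k}|_{X_y})=\sO_{X_y}$ concerns only the fixed metric $h$ on the fiber and does not by itself control the multiplier ideals of the new weights you create. Until that input is identified and justified, the proposed induction does not close.
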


The idea of using this ingredient as a substitute for weak positivity is due to Y. Deng (see also \cite{Deng}), whom we thank for allowing us to use it here. We give the proof including all the details from \cite{Vie-Zuo03a}, as some are also necessary for the proof of Proposition \ref{section_SNC}.

\begin{proof}[Proof of Proposition \ref{thm:ggrefine}] 
We use the theory of mild morphisms; for the definition, and a discussion of the relevant properties, please see \cite[\S2]{Vie-Zuo03a}.
Using the mild reduction procedure of Abramovich-Karu (see \cite[Lem. 2.3]{Vie-Zuo03a}), there exists $f_U\colon U\to V$, a birational model of the original family $f_{\tilde U}\colon \tilde U\to \tilde V$ with $U$ and $V$ smooth, which fits into a diagram
\[\begin{tikzcd}
U\rar{\subseteq}\dar{f_U}&W \dar{f_W} & \lar W'   \dar & \lar{\sigma_W} Z_W\dar{g_W} & \lar{\rho_W} W'' \rar{\delta_W}\dar{f''_W}& Z'_W\dar{g'_W}\\
V\rar{\subseteq}&Y  & \lar{\tau} Y' &\lar{=} Y'&\lar{=} Y'\rar{=} &Y',
\end{tikzcd}
\] 
where $\tau$ is a finite morphism with $Y'$ smooth, branched over a simple normal crossing divisor $\Delta_\tau$, $W'$ is the normalization of the main component of $W\times_Y Y'$, $\sigma_W$ is a resolution of $W'$ with centers in its singular locus, $\rho_W$ and $\delta_W$ are birational with $W''$ smooth, and $g'_W$ is mild. By taking further resolutions, we  
can assume that $\Delta_\tau+D$ and $f^{-1}_W(\Delta_\tau+D)$ are simple normal crossings divisors, where $D=Y\smallsetminus V$. 
By possibly composing $\tau$ with a Kawamata covering, we are allowed to assume that $Y'$ is smooth, and hence $W'$ is normal with rational singularities (see for instance \cite[Lem. 2.1]{Viehweg83}).

Since $f_{\tilde U}$ is of maximal variation, with geometric generic fiber admitting a good minimal model, by construction so is $g_W$,  hence by a well-known result of Kawamata \cite{Kawamata} we know that $[\det] g_{W*} \omega^v _{Z_W/Y'}$ is a big line bundle for some integer 
$v$ sufficiently large and divisible. Here and in what follows we use $[\bullet]$ to denote the reflexive hull of the corresponding operator on sheaves.

Fix an ample line bundle $\sA_Y$ on Y. Pick $k_0$ large enough so that $\sA=\sA_Y^{k_0}(-D)$ is also ample, $\tau^*\sA$ is very ample, and the global sections of $\tau^*\sA\otimes \omega^{-1}_{Y'}$ separate $2n$-jets. Then, by \cite[Cor. 2.4(ix)]{Vie-Zuo03a}, we have 
$$\big([\det] g_{W*} \omega^v _{Z_W/Y'} \big)^{N_v}=\tau^*\sA(D'+D)$$ 
for some positive integer $N_v$, where $D'$ an effective divisor on $Y$. 

Another input we need is the fact that the quantity
\[e(\omega^v_{W_y})=\sup\{\frac{1}{\textup{lct}(B)}~|~ B\in |\omega^v_{W_y}|\},\]
where $\textup{lct}(B)$ is the log canonical threshold of $B$, is upper semicontinuous as a function of $y \in V$. This is simple consequence of the standard lower semicontinuity of the log canonical threshold of divisors that are relative for smooth proper morphisms, combined with 
the invariance of plurigenera. It follows that there exists a positive integer $C$ such that 
\[e(\omega^v_{W_y})< Cv\]
uniformly for every $y\in V$.

We now take $r=C(C+1)vN_vr_0$, where $r_0=\textup{rank}(g_{W*} \omega^v _{Z_W/Y'})$. We obtain a new family 
$f\colon X\to Y$ by taking $X=W^{(r)}$, a desingularization of the main component of the $r$-th fiber product $W\times_Y\dots\times_Y W$. As always, we are allowed to assume that $f^{-1}(\Delta_\tau+D)$ is normal crossing. Completely similarly to the process for $f_W$, we can fit $f$ into a reduction diagram 
\[\begin{tikzcd}
X \dar{f} & \lar X'   \dar & \lar{\sigma} Z\dar{g} & \lar{\rho} X'' \rar{\delta}\dar{f''}& Z'\dar{g'}\\
Y  & \lar{\tau} Y' &\lar{=} Y'&\lar{=} Y'\rar{=} &Y',
\end{tikzcd}
\] 
where $X'$ is the normalization of the main component of $X\times_Y Y'$ (so that $X'$ has rational singularities), $\sigma$ is a resolution of $X'$ with centers in the singular locus, $\rho$ and $\delta$ are birational, with $X''$ smooth, and $Z' =Z'_W\times_Y\cdots\times_YZ'_W$ with the morphism $g'$ induced by $g'_W$. Since $g'_W$ is mild, we know that $g'$ is also mild (see \cite[Lem. 2.2(ii)]{Vie-Zuo03a}). 

Now by \cite[Cor. 2.4(vii)]{Vie-Zuo03a} we have
\[g'_{W*} \omega^v_{Z'_W/Y'}  \simeq g_{W*} \omega^v_{Z_W/Y'},\]
and both sheaves are reflexive. Hence, by flat base change and the projection formula, since $g'_*\omega_{Z'/Y'}^v$ is 
also reflexive, we get
$$g'_*\omega_{Z'/Y'}^v \simeq [\bigotimes^r] g'_{W*} \omega^v_{Z'_W/Y'}  \simeq [\bigotimes^r] g_{W*} \omega^v_{Z_W/Y'}.$$
Thanks again to \cite[Cor. 2.4(vii)]{Vie-Zuo03a}, we also have $g'_*\omega_{Z'/Y'}^v \simeq g_*\omega_{Z/Y'}^v$. On the other hand, there is a natural morphism 
\[[\det] g_{W*} \omega^v _{Z_W/Y'} \longrightarrow [\bigotimes^{r_0}] g_{W*} \omega^v_{Z_W/Y'},\]
which splits locally over $V'=\tau^{-1}(V)$ (since $g_W$ is smooth over $V'$, so is $g$). Putting everything together, we obtain an injective morphism 
\[\tau^*\sA(D'+D)^{C(C+1)v}=\big([\det] g_{W*} \omega^v _{Z_W/Y'} \big)^{C(C+1)vN_v}\longrightarrow g_*\omega_{Z/Y'}^v,\]
which also splits locally over $V'$.  This corresponds to an effective divisor 
$$\Gamma \in |\omega^v_{Z/Y'}\otimes g^* \tau^*\sA(D'+D)^{-C(C+1)v}|$$ 
which does not contain the fiber $Z_{y'}$ for every  $y'\in V'$. 

Since $Z_{y'}=W_y^r=W_y\times\cdots\times W_y$, using the bound $e(\omega^v_{W_y})< Cv$ and \cite[Cor. 5.21]{Viehweg95}, we have 
$$\textup{lct}(\Gamma|_{Z_{y'}})> \frac{1}{Cv}$$  
for every $y'\in V'$, where $y=\tau(y')$. 

For all $k>0$, we can apply Theorem \ref{thm:Cao} to the line bundle 
$$\sM = \omega^{kv}_{Z/Y'}\otimes g^* \tau^*\sA(D'+D)^{-kC(C+1)v},$$ 
with the natural singular metric induced by the effective divisor $k\Gamma$, with $\sB = \tau^* \sA$, using the invariance of plurigenera and the fact that 
 $\frac{1}{Cv}\Gamma|_{X_{y'}}$ is klt for all $y'\in V'$.  Consequently
\[g_*\omega^{k(C+1)v}_{Z/Y'}\otimes\tau^*\sA(D'+D)^{-kC(C+1)v}\otimes \tau^*\sA\]
is globally generated over $V'$.

By \cite[Lem. 3.2]{Viehweg83}, we have a natural morphism
\begin{equation}\label{eq:nfred}
g_*\omega^{k(C+1)v}_{Z/Y'}\longrightarrow \tau^*f_*\omega^{k(C+1)v}_{X/Y}
\end{equation}
which is an isomorphism over $V'$.
Since $\tau$ is finite, we can apply the projection formula to get a morphism 
\[\bigoplus_i \tau_*\sO_{Y'} \longrightarrow f_*\omega^{k(C+1)v}_{X/Y}\otimes\tau_*\sO_{Y'}\otimes\sA(D+D')^{-kC(C+1)v}\otimes\sA\] 
which is surjective over $V$. Now we pick $k$ sufficient large so that $\tau_*\sO_{Y'}\otimes \sA^{k(C+1)v-1}$ is globally generated. Therefore 
\[ f_*\omega^{K(C+1)v}_{X/Y}\otimes\tau_*\sO_{Y'}\otimes\sA^{-k(C-1)(C+1)v}\big(-kC(C+1)v(D+D')\big)\]
is generated by global sections over $V$, and so via the trace map so is
\[f_*\omega^{k(C+1)v}_{X/Y}\otimes\sA^{-k(C-1)(C+1)v}\big(-kC(C+1)v(D+D')\big).\]
Setting $m=K(C+1)v$, $\sL=\sA^{C-1}$ and $D_Y=C(D+D')$, we obtain the first statement.

The second statement follows immediately from the first, noting that the 
assumption implies that  the natural map
\[f^*f_*\omega^m_{X/Y}\longrightarrow \omega^m_{X/Y}\] 
is surjective over $U$. 
\end{proof}

\begin{remark}\label{section_space}
Recalling that $g'_*\omega_{Z'/Y'}^{m} \simeq g_*\omega_{Z/Y'}^{m}$, 
the proof of the proposition above shows more precisely that $f_* \omega^m_{X/Y} \otimes \sL(D_Y)^{-m}$ is generated over $V$ 
by sections belonging to the subspace $\V_m$ defined as the image of the natural map
\[H^0\big(Y',g'_*\omega^m_{Z'/Y'}\otimes\tau^*\sL(D_Y)^{-m}\big)\longrightarrow H^0\big(Y, f_*\omega^{m}_{X/Y}\otimes\sL(D_Y)^{-m}\big)\]
induced by ($\ref{eq:nfred}$).
\end{remark}

\begin{proposition}\label{section_SNC}
With notation as in the statement and proof of Theorem \ref{thm:ggrefine} and Remark \ref{section_space}, let $S\ge \Delta_\tau$ be an effective divisor and $\mu\colon \tilde  Y\to Y$ a log resolution of $(Y, D+S)$ with centers in the singular locus of $D+S$. Then for every 
$s\in \V_m$, there exists a closed subset $T\subset \tilde Y$ of codimension at least $2$,  a birational model $\tilde f\colon  \tilde  X\to 
\tilde Y$ of $f$ with $\tilde X$ smooth and projective, and a section 
\[\tilde s\in H^0\big(\tilde Y_0, \tilde f_*\omega^m_{\tilde X/\tilde Y}\otimes \mu^*\sL(D_Y)^{-m}\big),\] 
with $\tilde Y_0=\tilde Y\smallsetminus T$, such that 
\[\tilde s|_{\mu^{-1}(V\smallsetminus S)}=\mu^*(s|_{V\smallsetminus S}).\]
\end{proposition}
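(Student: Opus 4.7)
The plan is to base-change the reduction diagram of $f\colon X\to Y$ used in the proof of Proposition~\ref{thm:ggrefine} along $\mu\colon\tilde Y\to Y$, and to exploit the mildness of $g'\colon Z'\to Y'$ in order to transport the section $s$ to $\tilde Y$ outside a codimension-two locus. First I would form $\tilde Y'$, a desingularization of the main component of the normalization of $Y'\times_Y\tilde Y$, together with induced morphisms $\tilde\tau\colon\tilde Y'\to\tilde Y$ and $\tilde\mu\colon\tilde Y'\to Y'$. Since the branch divisor of $\tau$ is contained in $\Delta_\tau\subseteq S$ and $\mu^{-1}(D+S)$ is simple normal crossings, Abhyankar's lemma implies that $\tilde\tau$ is finite outside a closed subset $T\subseteq\tilde Y$ of codimension at least two, namely the image in $\tilde Y$ of the locus where the normalization of $Y'\times_Y\tilde Y$ is singular or where its desingularization is non-trivial. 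Analogously, I would take $\tilde X$ to be a desingularization of the main component of $X\times_Y\tilde Y$ and base-change the remaining pieces of the reduction diagram (the cyclic covers and normalizations defining $X''$, $Z$, $Z'$, $g$, $g'$) along $\tilde\mu$, obtaining families $\tilde f\colon\tilde X\to\tilde Y$, $\tilde g\colon\tilde Z\to\tilde Y'$ and $\tilde g'\colon\tilde Z'\to\tilde Y'$.

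Next, I would write $s\in\V_m$ as the image of a section
\[
s'\in H^0\big(Y',\,g'_*\omega^m_{Z'/Y'}\otimes\tau^*\sL(D_Y)^{-m}\big).
\]
Because $g'$ is mild and $\tilde\mu$ originates from a smooth variety, by \cite[Lem.~2.2]{Vie-Zuo03a} (possibly after a further modification supported in codimension $\ge 2$) the morphism $\tilde g'$ is again mild, and the formation of $g'_*\omega^m_{Z'/Y'}$ commutes with base change along $\tilde\mu$ in the sense that
\[
\tilde g'_*\omega^m_{\tilde Z'/\tilde Y'}\simeq \tilde\mu^*\,g'_*\omega^m_{Z'/Y'}
\]
outside a codimension-$\ge 2$ subset of $\tilde Y'$, via the flat base change and duality properties of mild morphisms recorded in \cite[Cor.~2.4]{Vie-Zuo03a}. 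Pulling back $s'$ via $\tilde\mu$ then yields a section $\tilde s'$ on a suitable open $\tilde Y'_0\subseteq\tilde Y'$ of $\tilde g'_*\omega^m_{\tilde Z'/\tilde Y'}\otimes\tilde\tau^*\mu^*\sL(D_Y)^{-m}$.

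To descend this to $\tilde Y$, I would use the base-changed analogue of \eqref{eq:nfred}, namely the composition
\[
\tilde g_*\omega^m_{\tilde Z/\tilde Y'}\simeq\tilde g'_*\omega^m_{\tilde Z'/\tilde Y'}\longrightarrow\tilde\tau^*\tilde f_*\omega^m_{\tilde X/\tilde Y},
\]
which is an isomorphism over the preimage of $\tilde V=\mu^{-1}(V)$, together with the trace map of the finite morphism $\tilde\tau\colon\tilde Y'_0\to\tilde Y\smallsetminus T$. This produces the desired section
\[
\tilde s\in H^0\big(\tilde Y\smallsetminus T,\,\tilde f_*\omega^m_{\tilde X/\tilde Y}\otimes\mu^*\sL(D_Y)^{-m}\big),
\]
and the compatibility $\tilde s|_{\mu^{-1}(V\smallsetminus S)}=\mu^*(s|_{V\smallsetminus S})$ is automatic, since $\mu$ is an isomorphism and $\tilde\tau$, $\tilde\mu$ restrict to $\tau$ over $V\smallsetminus S$.

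The main difficulty will be verifying that the mild base-change identification and the subsequent trace descent both hold genuinely outside a codimension-$\ge 2$ subset $T$. This requires careful tracking of all the normalizations and resolutions introduced throughout the base-changed reduction diagram, and relies crucially on the preservation of mildness under base change by smooth varieties, together with the flat-base-change behavior of $g'_*\omega^{\bullet}_{Z'/Y'}$ provided by \cite[Cor.~2.4]{Vie-Zuo03a}.
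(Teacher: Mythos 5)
Your proposal follows essentially the same route as the paper's proof: lift $s$ to $s'$ on $Y'$, base-change the mild reduction diagram along $\mu$ (taking $\tilde Y'$ from the normalization of $Y'\times_Y\tilde Y$ and $\tilde X$ from $X\times_Y\tilde Y$), use mildness to identify $\omega^m_{\tilde Z'/\tilde Y'}$ with the pullback so that $s'$ lifts to $\tilde s'$, and then descend via the base-changed comparison morphism of Viehweg and the trace map of $\tilde\tau$ over the complement of a codimension-two set where $\tilde\tau$ is finite and flat, with the compatibility over $\mu^{-1}(V\smallsetminus S)$ following because all these maps are isomorphisms there. This matches the paper's argument, with only cosmetic differences (e.g.\ invoking Abhyankar's lemma rather than the branched-over-a-normal-crossing-divisor observation, and identifying pushforward sheaves rather than pulling back the section directly via the Cartesian square $\tilde Z'=Z'\times_{Y'}\tilde Y'$).
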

\begin{proof}
By the definition of $\V_m$, we can lift $s$ to a section 
\[s'\in H^0\big(Y',g'_*\omega^{m}_{Z'/Y'}\otimes\tau^*\sL(D_Y)^{-m}\big).\]
We set $\tilde Y''$ to be the normalization of the main component of $Y'\times_Y \tilde Y$ and $\tilde\tau'\colon \tilde Y''\to \tilde Y$ the induced finite map. We compose this with a desingularization $\mu' \colon \tilde Y'\to \tilde Y''$, and get a birational map $\tilde\mu'\colon \tilde Y'\to Y'$.  We then take $\tilde X$ to be a desingularization of the main component of 
$X\times_Y\tilde Y$, so that the induced morphism $\tilde f\colon \tilde X\to \tilde Y$ is a birational model of $f$. We obtain a mild reduction diagram for the new family 
$\tilde f$
\[\begin{tikzcd}
\tilde X \dar{\tilde f} & \lar \tilde X'   \dar{\tilde f'} & \lar{\tilde\sigma} \tilde Z\dar{\tilde g} & \lar{\tilde\rho} \tilde X''_1 \rar{\tilde\delta}\dar{\tilde f''}& \tilde Z'\dar{\tilde g'}\\
\tilde Y  & \lar{\tilde\tau} \tilde Y' &\lar{=} \tilde Y'&\lar{=} \tilde Y'\rar{=} &\tilde Y',
\end{tikzcd}
\]
where $\tilde\tau=\tilde\tau'\circ \mu'$, a generically finite morphism, $\tilde Z'=Z'\times_Y'\tilde Y'$ and $\tilde g'$ is the induced mild morphism, 
and $\tilde \sigma$, $\tilde\rho$ and $\tilde\delta$ are similar to those in the mild reduction diagram for $f$. 

In particular we have a Cartesian diagram 
\[\begin{tikzcd}
 Z'\dar{g'}&  \tilde Z'  \lar{\tilde\nu'} \dar{ \tilde g'}\\
Y' &  \tilde Y' \lar{\tilde\mu'}.
\end{tikzcd}
\]
By the mildness of the vertical morphisms we have that $\tilde Z'$ is normal with rational singularities, and so 
\[\tilde\nu'^*\omega^m_{Z'/Y'} \simeq \omega^m_{\tilde Z'/\tilde Y'}.\]
Setting  $\tilde \sL=\mu^*\sL$ and $\tilde D_{\tilde Y}=\mu^*D_Y$, we thus conclude that 
$s'$ lifts to a section 
$$\tilde s'=\tilde\mu'^*s' \in H^0\big(\tilde Y', \tilde g'_*\omega^{m}_{\tilde Z'/\tilde Y'}\otimes\tilde\tau^*\tilde\sL(\tilde D_{\tilde Y})^{-m}\big).$$

Since $\tilde \tau$ is generically finite and branched over the normal crossing divisor $\mu^{-1}(\Delta_\tau)$, there exists a subset $T\subset \tilde Y$ of codimension at least $2$ 
such that $\tilde\tau|_{\tilde Y_0}$ is finite and flat, where $\tilde Y_0=\tilde Y\smallsetminus T$. We are also allowed to assume that $\tilde f^{-1}(\mu^{-1}(S))$ is a simple normal crossing divisor, and hence so is $\tilde f^{-1}(\mu^{-1}(\Delta_\tau))$, by taking further blow-ups if necessary.
Setting $\tilde Y'_0=\tilde\tau^{-1}(\tilde Y_0)$, we deduce that $\tilde X'_0=\tilde f'^{-1}Y'_0$ is normal with rational singularities. (For all of these statements, see e.g. \cite[Lem. 2.1]{Viehweg83}.) Therefore, thanks to \cite[Lem. 3.2]{Viehweg83}, there is a morphism
\begin{equation} \label{eq:nfred1}
\tilde g'_*\omega^{m}_{\tilde Z'/\tilde Y'}|_{\tilde Y'_0}\longrightarrow \tilde\tau^*\tilde f_*\omega^{m}_{\tilde X/\tilde Y}|_{\tilde Y'_0},
\end{equation}
which is identical to \eqref{eq:nfred} over $\tau^{-1}(V\setminus S)$ (as $\mu$ is the identity over $V\setminus S$).
This in turn induces
\[ \tilde\tau_*\tilde g'_*\omega^{m}_{\tilde Z'/\tilde Y'}|_{\tilde Y_0}\longrightarrow \tilde\tau_*\tilde\tau^*\tilde f_*\omega^{m}_{\tilde X/\tilde Y}|_{\tilde Y_0}\longrightarrow \tilde f_*\omega^{m}_{\tilde X/\tilde Y}|_{\tilde Y_0},\]
where the last morphism is induced by the trace map $\tilde \tau_*\sO_{\tilde Y'_0}\to \sO_{\tilde Y_0}$. Finally, we conclude the existence of a morphism
\[\eta: \tilde\tau_*\tilde g'_*\omega^{m}_{\tilde Z'/\tilde Y'}\otimes \tilde\sL(\tilde D_{\tilde Y})^{-m} |_{\tilde Y_0}\to \tilde f_*\omega^{m}_{\tilde X/\tilde Y}\otimes \tilde\sL(\tilde D_{\tilde Y})^{-m} |_{\tilde Y_0}\]
and define 
\[\tilde s=\eta(\tilde s'|_{\tilde Y'_0})\in H^0\big(\tilde Y_0, \tilde f_*\omega^l_{\tilde X/\tilde Y}\otimes \mu^*\sL(D_Y)^{-l}\big).\]
Since \eqref{eq:nfred} and \eqref{eq:nfred1} are isomorphisms over $\tau^{-1}(V\smallsetminus S)$, we have 
$$\tilde s|_{\mu^{-1}(V\smallsetminus S)}=\eta(\tilde s')|_{\mu^{-1}(V\smallsetminus S)}=\mu^*(s|_{V\smallsetminus S}).$$
\end{proof}

\bibliography{bibliography/general}

\begin{biblist}

\bib{Ahl61}{article}{
      author={Ahlfors, Lars.~V},
       title={Some remarks on {T}eichm\"uller's space of {R}iemann surfaces},
        date={1961},
     journal={Ann. Math.},
      volume={74},
      number={1},
       pages={171\ndash 179},
        note={\href{http://www.jstor.org/stable/1970309}{DOI:10.2307/1970309}},
}

\bib{Bjork}{book}{
      author={Bj{\"o}rk, Jan-Erik},
       title={Analytic {${\scr D}$}-modules and applications},
      series={Mathematics and its Applications},
   publisher={Kluwer Academic Publishers Group, Dordrecht},
        date={1993},
      volume={247},
        ISBN={0-7923-2114-6},
         url={http://dx.doi.org/10.1007/978-94-017-0717-6},
      review={\MR{1232191}},
}

\bib{BPW}{article}{
      author={Berndtsson, Bo},
      author={P{\u a}un, Mihai},
      author={Wang, Xu},
       title={Algebraic fiber spaces and curvature of higher direct images},
        date={2017},
  note={\href{https://arxiv.org/abs/1704.02279}{arxiv.org/abs/1704.02279}},
}

\bib{Bru15}{article}{
      author={Brunbarbe, Yohan},
       title={Symmetric differentials and variation of {H}odge structures},
        date={2015},
        note={To appear in J. Reine Angew. Math.},
}

\bib{Cao16}{article}{
      author={Cao, Junyan},
       title={Albanese maps of projective manifolds with nef anticanonical bundles },
        date={2016},
     Note={Preprint arXiv:1612.05921, to appear in  Ann. Sci. \'Ec. Norm. Sup\'er.},
}

\bib{CKS}{article}{
      author={Cattani, Eduardo},
      author={Kaplan, Aroldo},
      author={Schmid, Wilfried},
       title={Degeneration of {H}odge structures},
        date={1986},
     journal={Ann. Math.},
      volume={123},
      number={3},
       pages={457\ndash 535},
         url={http://www.jstor.org/stable/1971333},
}

\bib{CP16}{unpublished}{
      author={Campana, Fr{\'e}d{\'e}ric},
      author={P{\u a}un, Mihai},
       title={Foliations with positive slopes and birational stability of
  orbifold cotangent bundles},
        date={2015},
        note={Preprint
  \href{https://arxiv.org/abs/1508.02456}{arXiv:1508.02456}},
}

\bib{Deligne70}{book}{
      author={Deligne, Pierre},
       title={{\'E}quations diff{\'e}rentielles {\`a} points singuliers
  r{\'e}guliers},
   publisher={Springer-Verlag},
     address={Berlin},
        date={1970},
        note={Lecture Notes in Mathematics, Vol. 163},
      review={\MR{54 \#5232}},
}

\bib{DemaillyBook}{misc}{
      author={Demailly, Jean-Pierre},
       title={Complex {A}nalytic and {D}ifferential {G}eometry},
        date={2009},
         url={http://www-fourier.ujf-grenoble.fr/~demailly/books.html},
        note={OpenContent Book, freely available from the author's web site.},
}

\bib{Dem97}{incollection}{
      author={Demailly, J.-P.},
       title={Algebraic criteria for {K}obayashi hyperbolic projective
  varieties and jet differentials},
        date={1997},
   booktitle={Algebraic geometry---{S}anta {C}ruz 1995},
      series={Proc. Sympos. Pure Math.},
      volume={62},
   publisher={Amer. Math. Soc.},
     address={Providence, RI},
       pages={285\ndash 360},
      review={\MR{MR1492539 (99b:32037)}},
}

\bib{Deng}{article}{
      author={Deng, Ya},
       title={Kobayashi hyperbolicity of moduli spaces of minimal projective manifolds of general type (with the appendix by Dan Abramovich)},
        date={2018},
     Note={preprint arXiv:1806.01666},
}

\bib{El03}{article}{
      author={El~Goul, Jawher},
       title={Logarithmic jets and hyperbolicity},
        date={2003},
     journal={Osaka J. Math.},
      volume={40},
      number={2},
       pages={469\ndash 491},
}

\bib{EV92}{book}{
      author={Esnault, H{\'e}l{\`e}ne},
      author={Viehweg, Eckart},
       title={Lectures on vanishing theorems},
      series={DMV Seminar},
   publisher={Birkh{\"a}user Verlag},
     address={Basel},
        date={1992},
      volume={20},
        ISBN={3-7643-2822-3},
      review={\MR{MR1193913 (94a:14017)}},
}

\bib{GG80}{incollection}{
      author={Green, Mark},
      author={Griffiths, Phillip},
       title={Two applications of algebraic geometry to entire holomorphic
  mappings},
        date={1980},
   booktitle={The {C}hern {S}ymposium 1979 ({P}roc. {I}nternat. {S}ympos.,
  {B}erkeley, {C}alif., 1979)},
   publisher={Springer},
     address={New York},
       pages={41\ndash 74},
      review={\MR{MR609557 (82h:32026)}},
}

\bib{Gieseker}{article}{
      author={Gieseker, D.},
       title={Global moduli for surfaces of general type},
        date={1977},
        ISSN={0020-9910},
     journal={Invent. Math.},
      volume={43},
      number={3},
       pages={233\ndash 282},
         url={http://dx.doi.org/10.1007/BF01390081},
      review={\MR{0498596}},
}

\bib{HPS}{unpublished}{
      author={Hacon, Christopher},
      author={Popa, Mihnea},
      author={Schnell, Christian},
       title={Algebraic fiber spaces over abelian varieties: around a recent
  theorem by {C}ao and {P}\u aun},
        date={2016},
        note={Preprint
  \href{https://arxiv.org/abs/1611.08768}{arXiv:1611.08768}},
}

\bib{Kawamata}{article}{
      author={Kawamata, Yujiro},
      title={Minimal models and the {K}odaira dimension of algebraic fiber spaces},
        date={1985},
        ISSN={0075-4102},
     journal={J. Reine Angew. Math.},
      volume={363},
       pages={1\ndash 46},
      review={\MR{814013}},
}

\bib{KK08}{article}{
      author={Kebekus, Stefan},
      author={Kov{\'a}cs, S{\'a}ndor~J.},
       title={Families of canonically polarized varieties over surfaces},
        date={2008},
        ISSN={0020-9910},
     journal={Invent. Math.},
      volume={172},
      number={3},
       pages={657\ndash 682},
  note={\href{http://dx.doi.org/10.1007/s00222-008-0128-8}{DOI:10.1007/s00222-008-0128-8}.
  Preprint \href{http://arxiv.org/abs/0707.2054}{arXiv:0707.2054}},
      review={\MR{2393082}},
}

\bib{KK08b}{article}{
      author={Kebekus, Stefan},
      author={Kov{\'a}cs, S{\'a}ndor~J},
       title={Families of varieties of general type over compact bases},
        date={2008},
        ISSN={0001-8708},
     journal={Adv. Math.},
      volume={218},
      number={3},
       pages={649\ndash 652},
  note={\href{http://dx.doi.org/10.1016/j.aim.2008.01.005}{DOI:10.1016/j.aim.2008.01.005}},
      review={\MR{2414316 (2009d:14042)}},
}

\bib{KK10}{article}{
      author={Kebekus, Stefan},
      author={Kov{\'a}cs, S{\'a}ndor~J.},
       title={The structure of surfaces and threefolds mapping to the moduli
  stack of canonically polarized varieties},
        date={2010},
        ISSN={0012-7094},
     journal={Duke Math. J.},
      volume={155},
      number={1},
       pages={1\ndash 33},
         url={http://dx.doi.org/10.1215/00127094-2010-049},
      review={\MR{2730371 (2011i:14060)}},
}

\bib{KM98}{book}{
      author={Koll{\'a}r, J{\'a}nos},
      author={Mori, Shigefumi},
       title={Birational geometry of algebraic varieties},
      series={Cambridge Tracts in Mathematics},
   publisher={Cambridge University Press},
     address={Cambridge},
        date={1998},
      volume={134},
        ISBN={0-521-63277-3},
      review={\MR{2000b:14018}},
}

\bib{Kob05}{book}{
      author={Kobayashi, Shoshichi},
       title={Hyperbolic manifolds and holomorphic mappings},
     edition={Second Ed.},
   publisher={World Scientific Publishing Co. Pte. Ltd., Hackensack, NJ},
        date={2005},
        ISBN={981-256-496-9},
        note={An introduction},
      review={\MR{2194466}},
}

\bib{Kol86}{article}{
      author={Koll{\'a}r, J{\'a}nos},
       title={Higher direct images of dualizing sheaves. {I}},
        date={1986},
        ISSN={0003-486X},
     journal={Ann. of Math. (2)},
      volume={123},
      number={1},
       pages={11\ndash 42},
         url={http://dx.doi.org/10.2307/1971351},
      review={\MR{825838 (87c:14038)}},
}

\bib{Kollar87}{incollection}{
      author={Koll{\'a}r, J.},
       title={Subadditivity of the {K}odaira dimension: f{i}bers of general
  type},
        date={1987},
   booktitle={Algebraic geometry, {S}endai, 1985},
      series={Adv. Stud. Pure Math.},
      volume={10},
   publisher={North-Holland},
     address={Amsterdam},
       pages={361\ndash 398},
      review={\MR{946244 (89i:14029)}},
}

\bib{Kovacs00a}{article}{
      author={Kov{\'a}cs, S{\'a}ndor~J},
       title={Algebraic hyperbolicity of f{i}ne moduli spaces},
        date={2000},
        ISSN={1056-3911},
     journal={J. Algebraic Geom.},
      volume={9},
      number={1},
       pages={165\ndash 174},
      review={\MR{1713524 (2000i:14017)}},
}

\bib{Lan86}{article}{
      author={Lang, Serge},
       title={Hyperbolic and {D}iophantine analysis},
        date={1986},
        ISSN={0273-0979},
     journal={Bull. Amer. Math. Soc. (N.S.)},
      volume={14},
      number={2},
       pages={159\ndash 205},
         url={http://dx.doi.org/10.1090/S0273-0979-1986-15426-1},
      review={\MR{828820 (87h:32051)}},
}

\bib{McQ98}{article}{
      author={McQuillan, Michael},
       title={Diophantine approximations and foliations},
        date={1998},
        ISSN={0073-8301},
     journal={Inst. Hautes \'Etudes Sci. Publ. Math.},
      volume={No. 87},
       pages={121\ndash 174},
      review={\MR{MR1659270 (99m:32028)}},
}

\bib{Migliorini95}{article}{
      author={Migliorini, Luca},
       title={A smooth family of minimal surfaces of general type over a curve
  of genus at most one is trivial},
        date={1995},
        ISSN={1056-3911},
     journal={J. Algebraic Geom.},
      volume={4},
      number={2},
       pages={353\ndash 361},
      review={\MR{1311355 (95m:14023)}},
}

\bib{MR2871152}{article}{
      author={Patakfalvi, Zsolt},
       title={Viehweg's hyperbolicity conjecture is true over compact bases},
        date={2012},
        ISSN={0001-8708},
     journal={Adv. Math.},
      volume={229},
      number={3},
       pages={1640\ndash 1642},
         url={http://dx.doi.org/10.1016/j.aim.2011.12.013},
      review={\MR{2871152 (2012m:14072)}},
}

\bib{Pha79}{article}{
      author={Pham, Fr\'ed\'eric},
       title={Singularit\'es des syst\`emes diff\'erentieles {G}auss-{M}anin},
        date={1979},
     journal={Progress in Math, Birkh\"auser Boston},
      number={2},
}

\bib{Popa}{article}{
      author={Popa, Mihnea},
       title={Positivity for {H}odge modules and geometric applications},
        date={2016},
     journal={arXiv:1605.08093, to appear in the Proceedings of the AMS Summer
  Institute in Algebraic Geometry, Utah 2015},
}

\bib{PS-GV}{article}{
      author={Popa, Mihnea},
      author={Schnell, Christian},
       title={Generic vanishing theory via mixed {H}odge modules},
        date={2013},
        ISSN={2050-5094},
     journal={Forum Math. Sigma},
      volume={1},
       pages={e1, 60},
         url={http://dx.doi.org/10.1017/fms.2013.1},
      review={\MR{3090229}},
}

\bib{PS15}{article}{
      author={Popa, M.},
      author={Schnell, Christian},
       title={Viehweg's hyperbolicity conjecture for families with maximal
  variation},
        date={2017},
     journal={Invent. Math},
      volume={208},
      number={3},
       pages={677\ndash 713},
}

\bib{PW16}{article}{
      author={Popa, Mihnea},
      author={Wu, Lei},
       title={Weak positivity for {H}odge modules},
        date={2016},
     journal={Math. Res. Lett.},
      volume={23},
      number={4},
       pages={1137\ndash 1153},
}

\bib{Rousseau}{article}{
      author={Rousseau, Erwan},
       title={{KAWA} lecture notes on complex hyperbolic geometry},
        date={2015},
     journal={to appear in Ann. Fac. Sci. Toulouse Math.},
}

\bib{Roy74}{article}{
      author={Royden, Halsey},
       title={Intrinsic metrics on {T}eichm\"uller space},
        date={1974},
     journal={Proceedings of the International Congress of Mathematicians
  (Vancouver, B. C., 1974)},
      volume={2},
       pages={217\ndash 221},
        note={Canad. Math. Congress, Montreal, Que., 1975.},
}

\bib{SaiMHM}{article}{
      author={Saito, Morihiko},
       title={Mixed {H}odge modules},
        date={1990},
     journal={Publ. Res. Inst. Math. Sci.},
      volume={26},
      number={2},
       pages={423\ndash 432},
}

\bib{Sch12}{article}{
      author={Schumacher, Georg},
       title={Positivity of relative canonical bundles and applications},
        date={2012},
     journal={Invent. Math},
      volume={190},
      number={1},
       pages={1\ndash 56},
  note={\href{http://dx.doi.org/10.1007/s00222-012-0374-7}{DOI:10.1007/s00222-012-0374-7}},
}

\bib{Sch17}{article}{
      author ={Schumacher, G.}, 
      title={Moduli of canonically polarized manifolds, higher order Kodaira-Spencer maps, and and analogy to 
      Calabi-Yau manifolds},
      date={2017},
      note={\href{https://arxiv.org/abs/1702.07628}{arXiv:1702.07628}}
}

\bib{Sch73}{article}{
      author={Schmid, Wilfried},
       title={Variation of {H}odge structures: singularities of the period
  mapping},
        date={1973},
     journal={Invent. Math.},
      volume={22},
       pages={211\ndash 319},
         url={http://www.jstor.org/stable/1971333},
}

\bib{Siu86}{article}{
      author={Siu, Yum~Tong},
       title={Curvature of the {W}eil-{P}etersson metric in the moduli space of
  compact {K}\"ahler-{E}instein manifolds of negative first {C}hern class},
        date={1986},
     journal={Contributions to Several Complex Variables, Aspects Math.},
       pages={261\ndash 298},
}

\bib{TY15}{article}{
      author={To, Wing-Keung},
      author={Yeung, Sai-Kee},
       title={Finsler metrics and {K}obayashi hyperbolicity of the moduli
  spaces of canonically polarized manifolds},
        date={2015},
     journal={Ann. Math.},
      volume={181},
      number={2},
       pages={547\ndash 586},
  note={\href{http://annals.math.princeton.edu/2015/181-2/p03}{DOI:10.4007/annals.2015.181.2.3}},
}

\bib{TY16}{article}{
      author={To, Wing-Keung},
      author={Yeu{}ng, Sai-Kee},
       title={Augmented {W}eil-{P}etersson metrics on moduli spaces of
  polarized {R}icci-flat {K}\"ahler manifolds and orbifolds},
        date={2016},
        note={To appear in the Asian J. Math. volume in honor of Ngaiming Mok},
}

\bib{Viehweg83}{incollection}{
      author={Viehweg, Eckart},
       title={Weak positivity and the additivity of the {K}odaira dimension for
  certain fibre spaces},
        date={1983},
   booktitle={Algebraic varieties and analytic varieties (Tokyo, 1981)},
      series={Adv. Stud. Pure Math.},
      volume={1},
   publisher={North-Holland},
     address={Amsterdam},
       pages={329\ndash 353},
      review={\MR{715656 (85b:14041)}},
}

\bib{Viehweg95}{book}{
      author={Viehweg, E.},
       title={Quasi-projective moduli for polarized manifolds},
      series={Ergebnisse der Mathematik und ihrer Grenzgebiete (3)},
   publisher={Springer-Verlag},
     address={Berlin},
        date={1995},
      volume={30},
        ISBN={3-540-59255-5},
      review={\MR{1368632 (97j:14001)}},
}

\bib{Vie-Zuo01}{article}{
      author={Viehweg, Eckart},
      author={Zuo, Kang},
       title={On the isotriviality of families of projective manifolds over
  curves},
        date={2001},
        ISSN={1056-3911},
     journal={J. Algebraic Geom.},
      volume={10},
      number={4},
       pages={781\ndash 799},
      review={\MR{1838979 (2002g:14012)}},
}

\bib{VZ02}{incollection}{
      author={Viehweg, Eckart},
      author={Zuo, K.},
       title={Base spaces of non-isotrivial families of smooth minimal models},
        date={2002},
   booktitle={Complex geometry (G{\"o}ttingen, 2000)},
   publisher={Springer},
     address={Berlin},
       pages={279\ndash 328},
      review={\MR{1922109 (2003h:14019)}},
}

\bib{Vie-Zuo03a}{article}{
      author={Viehweg, Eckart},
      author={Zuo, Kang},
       title={On the {B}rody hyperbolicity of moduli spaces for canonically
  polarized manifolds},
        date={2003},
        ISSN={0012-7094},
     journal={Duke Math. J.},
      volume={118},
      number={1},
       pages={103\ndash 150},
      review={\MR{1978884 (2004h:14042)}},
}

\bib{Wol86}{article}{
      author={Wolpert, Scott~A.},
       title={Chern forms and the {R}iemann tensor for the moduli space of
  curves},
        date={1986},
     journal={Invent. Math.},
      volume={85},
       pages={119\ndash 145},
}

\bib{Zuo00}{article}{
      author={Zuo, Kang},
       title={On the negativity of kernels of {K}odaira-{S}pencer maps on
  {H}odge bundles and applications},
        date={2000},
        ISSN={1093-6106},
     journal={Asian J. Math.},
      volume={4},
      number={1},
       pages={279\ndash 301},
        note={Kodaira's issue},
      review={\MR{1803724 (2002a:32011)}},
}

\end{biblist}

\end{document}